\documentclass[journal]{IEEEtran}
\usepackage[utf8]{inputenc}
\usepackage{mathtools}
\mathtoolsset{showonlyrefs,showmanualtags}
\usepackage{amssymb,amsthm}
\usepackage{dsfont}
\usepackage{color}
\usepackage{bbold} 
\usepackage{booktabs}
\usepackage[yyyymmdd,hhmmss]{datetime}
\usepackage{bm,upgreek}
\usepackage{tablefootnote}
\usepackage{nth}

 \usepackage{tikz}
\usetikzlibrary{graphs,graphs.standard}
\usetikzlibrary{shapes,arrows}\usetikzlibrary{arrows,decorations.pathmorphing,backgrounds,positioning,fit,petri}

\usepackage[ruled]{algorithm} 
\usepackage{algpseudocode}

\newtheorem{problem}{Problem}
\newtheorem{assumption}{Assumption}
\newtheorem{theorem}{Theorem}
\newtheorem{definition}{Definition}
\newtheorem{remark}{Remark}
\newtheorem{lemma}{Lemma}
\newtheorem{corollary}{Corollary}
\newtheorem{proposition}{Proposition}

\usepackage[backend=biber,style=ieee,citestyle=numeric-comp]{biblatex}

\addbibresource{/Users/jalal/Documents/Research/Post-doc/Papers/Data-Base/Jalal_Ref.bib}

\newcommand{\Ninf}[1]{ \| #1 \| }
\newcommand{\Exp}[1]{\mathbb{E}[ #1]}
\newcommand{\Prob}[1]{\mathbb{P}(#1)} 
\newcommand{\ID}[1]{ \mathbb{1} (#1 ) }
\newcommand{\B}{ \mathtt{blank} }
\newcommand{\N}{ \mathbb{N}^\ast }
\newcommand{\TM}{T_{\scriptstyle \mathsf{m}}}
\newcommand{\argmin}{\operatornamewithlimits{argmin}} 
\newcommand{\argmax}{\operatornamewithlimits{argmax}} 
\DeclareMathOperator{\Binopdf}{binopdf}
\DeclareMathOperator{\TR}{Tr}
\DeclareMathOperator{\DIAG}{diag}
\DeclareMathOperator{\VEC}{vec}
\newcommand{\Mspace}{\mathcal{M}(n)}

\newcommand{\Compress}{\medmuskip=0mu
\thinmuskip=0mu
\thickmuskip=0mu}

\tikzset{
    >=stealth',
    punkt/.style={
           rectangle,
           rounded corners,
           draw=black, very thick,
           text width=4.5em,
           minimum height=2em,
           text centered},
}

\title{Data Collection versus Data Estimation: A Fundamental Trade-off in Dynamic  Networks
}

\author{Jalal~Arabneydi,~\IEEEmembership{Member,~IEEE,}
        and~Amir~G.~Aghdam,~\IEEEmembership{Senior Member,~IEEE}
\IEEEcompsocitemizethanks{\IEEEcompsocthanksitem This work has been supported in part by Natural Sciences and Engineering  Research Council of Canada (NSERC) under Grant RGPIN-262127-17, and in part by Concordia University under Horizon Postdoctoral Fellowship.
\IEEEcompsocthanksitem The authors are with Department of Electrical and Computer Engineering, 
        Concordia University, 1455 de Maisonneuve Blvd, Montreal, QC, Canada.
{\tt\small Email:jalal.arabneydi@mail.mcgill.ca} and
        {\tt\small Email:aghdam@ece.concordia.ca}
 \IEEEcompsocthanksitem This article has supplementary downloadable material available at {\tt\small http://ieeexplore.ieee.org}, provided by the authors.
  \IEEEcompsocthanksitem Digital Object Identifier 10.1109/TSNE.2020.2966504
}
}
\begin{document}
\maketitle
\thispagestyle{empty}
\pagestyle{empty}
\begin{abstract}
An important   question that often arises in the operation of networked systems  is whether to collect the real-time  data  or to estimate them based on the previously collected data. Various factors should be taken into account  such as  how informative the data are at each time instant  for state estimation, how costly and credible the collected  data are, and how rapidly the data vary with time.  The above question can be  formulated as  a dynamic decision making problem with imperfect information structure, where  a decision maker wishes to find an efficient way to switch between data collection and data estimation while the quality of the estimation depends on the  previously collected data (i.e., duality effect). In this paper, the  evolution  of the state of each node is  modeled as   an exchangeable Markov process for discrete features and equivariant linear system for continuous features, where  the data  of interest are  defined in the former case  as the empirical distribution  of the states,  and in the latter case  as the weighted average of the states.  When the data are collected, they may or may not be credible, according to  a Bernoulli distribution.    Based on a novel planning space, a Bellman equation is proposed to identify a near-optimal  strategy whose computational complexity is  logarithmic  with respect to the inverse of the  desired  maximum distance from the optimal solution, and  polynomial  with respect to the number of  nodes.  A reinforcement learning algorithm is developed for the case when  the model is not known exactly, and its convergence to the near-optimal solution is shown subsequently.  In addition, a certainty threshold is introduced  that determines when data estimation is more desirable than data collection, as the number of nodes increases. For the special case of linear dynamics, a separation principle is constructed wherein the optimal estimate is computed by a Kalman-like filter, irrespective of the probability distribution of random variables. It is shown that the
complexity of finding the proposed sampling  strategy, in this special case,  is independent of the size of the state space and the number of nodes.  Examples of a sensor network, a communication network and a social network   are provided.
\end{abstract}
\begin{IEEEkeywords}
Networked  systems, Partially observable Markov decision process, reinforcement learning,  separation principle.
\end{IEEEkeywords}

\section{Introduction}
The trade-off between the cost  and value of information  emerges in different types of networks, where  it is important to monitor the status of the network by analyzing its real-time data.   There is often  a cost associated with collecting  the  data; hence, the question arises as    when an estimate of the network  data would be more desirable than monitoring the operation of the network.

A  \emph{partially observable Markov decision process} (POMDP) model is presented in this paper  to address the above  trade-off in invariant/equivariant  networks.  The data are defined as  the empirical distribution and weighted average of the states of the nodes   for finite and infinite state spaces, respectively, and   the  per-step cost function incorporates  the  two  competing  concepts of the above trade-off:  the cost of information and  the value  of  information. It is to be noted that the empirical distribution  and  weighted average (i.e., linear regression) of  states are the data  of interest in  various  applications, specially the ones modelled by deep neural networks  that  have recently received much attention~\cite{lecun2015deep,schmidhuber2015deep}. 
For example,  the empirical distribution appears in  Markov-chain deep teams  wherein  the nodes are  partitioned into several sub-populations such that  the system is invariant to  the permutation  of  nodes in each sub-population~\cite{Jalal2019MFT}.  The  weighted average, on the other hand,  emerges  in linear quadratic  deep teams  wherein  the nodes in each sub-population are not necessarily exchangeable and can have different weights~\cite{Jalal2019risk}. See other examples in  social networks~\cite{jadbabaie2012non}, epidemics~\cite{van2009virus}, cyber-security networks~\cite{wang2014mean} and smart grids~\cite{JalalACC2018}, to name only a few.

 In general,  finding  the optimal solution of an infinite-horizon discounted cost  POMDP  is  an undecidable problem~\cite{madani2003undecidability},\footnote{The computational complexity of  finding  the  optimal solution  of  the finite-horizon POMDP  is PSpace-complete,  whereas that of MDP is P-complete~\cite{papadimitriou1987complexity}.}  
and even  finding a near-optimal solution  is  NP-hard~\cite{meuleau1999solving}. On the other hand,  it is not always possible  to  have sufficiently accurate knowledge of the model of nodes.  Thus,  it is  important  to  learn  the solution  when the exact  model is not available, and this clearly adds to the complexity of the problem.  Due to the complexity of finding a near-optimal solution,  most of the existing work is mainly  focused on developing  approximation methods for  POMDPs, some of which  are briefly  reviewed in the  next paragraph.

 In grid-based approaches, an approximate value function is computed at a fixed number of points (called grids), and then interpolated over the entire belief space~\cite{lovejoy1991computationally}. The advantage of such approaches is that their computational complexity remains the same at every iteration (i.e., does not increase with time) but their drawback is that the fixed points may not be reachable. In point-based methods, the reachability problem is addressed by considering the reachable set only, where  an approximate value function is calculated iteratively in terms of  $\alpha$-vectors\footnote{The notion of $\alpha$-vectors was  introduced in~\cite{smallwood1973optimal} to solve the finite-horizon POMDP, and was later enhanced  
  by pruning  dominated vectors~\cite{littman1994witness,kaelbling1998planning}.
 }  over a finite number of points in the reachable set~\cite{Shani2013survey}.
Note that unlike grid-based approaches,  here the points are not fixed and may change as the value function changes. In policy-search methods such  as finite-state controllers, attention is restricted to a certain class (structure) of strategies, and the objective is to find the best strategy in that class using policy iteration and gradient-based techniques~\cite{hansen1998finite}.  The above methods  often use  the  belief space as the planning space, whose size  grows exponentially with the number of nodes and  time horizon; thus, they suffer from the curse of dimensionality. In addition, the dynamics of the belief state  depends on the transition probability matrix  (i.e., it is a model-based state). Hence, it is not clear how the above POMDP solvers  may  be used  in the case where the model is not exactly known~\cite{krishnamurthy2016partially}.   For more details on POMDPs, the interested reader is referred to~\cite{Cassandra1998survey,krishnamurthy2016partially} and references therein.

In this paper,  it is desired to find a near-optimal strategy that determines, at each time instant,   whether to collect the data or to estimate them.  In  contrast to   POMDP solvers mentioned in the previous paragraph,  we  exploit the structure of the problem  to efficiently solve the resultant POMDP by  using a different planning space (which is smaller  than the belief space and is  independent of the model).\footnote{The idea of defining a pseudo-state  to  plan and learn in POMDPs by taking  the structure of the problem into account was first  introduced in~\cite[Chapters 5,6]{arabneydi2016new} and a preliminary result was presented in~\cite{JalalACC2015}.  In particular,  the pseudo-state (called incrementally expansion representation) provides a guaranteed optimality bound, and is  more general than  (model-based) belief state and (model-free) history state.} To determine the complexity of the above problem,  it is noted that  the  information structure is imperfect,  and  the  model structure is not necessarily complete (known), yet the decision maker  wishes to  sequentially choose the frequency of  collecting data based on several factors. Some of such factors include, for example,  how informative  the data are at  each time instant to be fed to the estimator,   or,  how costly and credible that data are  upon collection, and how quickly the states of nodes change with time.  Three  examples are provided to illustrate the efficacy of   the proposed strategy. The first example studies  a  sensor network, where  a sensor measures a  dynamic phenomenon with saturation levels such as  the temperature of a room,   and  reports it to a data center over an unreliable link at some communication cost  in such a way that  the   estimate   constructed at  the data center is reliable. The second example studies a communication network with two different topologies, where the weighted average of an arbitrary number of Markov processes with linear dynamics is to be collected at some transmission cost.  The third example deals with a social network, where an agency is to conduct  a survey  from  a number of users by offering monetary rewards  as an  incentive for participation.



The paper is organized as follows. In Section~\ref{sec:problem},   the  problem is  formulated  for both known and unknown models.   To  facilitate the analysis,   some preliminary results on the  dynamics  of the empirical distribution are derived  in Section~\ref{sec:preliminary}. The proposed solutions  are then presented  in Section~\ref{sec:main}  followed by   the asymptotic analysis  in Section~\ref{sec:asymptotic}.  The special case of   linear dynamics  is  studied  in Section~\ref{sec:linearcase}.   The extension of the obtained results to more complex networks is discussed in Section~\ref{sec:gen}.  To  demonstrate the efficacy of the proposed strategies,  three numerical examples are provided in Section~\ref{sec:applications}.  Finally,  some concluding remarks are given  in Section~\ref{sec:conclusions}. 

\subsection{Notation}
Throughout this paper,   $\mathbb{N}$, $\mathbb{Z}$ and $\mathbb{R}$  denote the set of natural, integer and real numbers, respectively.  For any $k \in \mathbb{N}$,   the shorthand notations   $\mathbb{N}_k$, $\mathbb{N}^\ast_k$ and $\mathbb{Z}_k$  are used to represent the finite set of integers  $\{1, \ldots, k\}$, $\{0, 1, \ldots, k\}$ and $\{-k,\ldots, 0, \ldots, k\}$, respectively.  Given vectors $a,b,c$,  $\VEC(a,b,c)=[a^\intercal, b^\intercal, c^\intercal]^\intercal$. For any $a,b \in \mathbb{N}$,  $a \leq b$,    $x_{a:b}$ is defined as the vector $(x_a,x_{a+1},\ldots,x_{b})$. Moreover, $\ID{\boldsymbol \cdot}$ is the indicator function, $\Prob{\boldsymbol \cdot}$ is the probability of an event,  $\Exp{\boldsymbol \cdot}$ is the expectation of a random variable,  $\Ninf{\boldsymbol \cdot}$  is the infinity norm of a vector,   $\TR(\boldsymbol \cdot)$ is the  trace of a matrix, and $\DIAG(\boldsymbol \cdot)$ is a diagonal matrix.   Given a finite  set $\mathcal{S}$, $\mathcal{P}(\mathcal{S})$ denotes the space of probability measures over set $\mathcal{S}$, and $|\mathcal{S}|$ is the size of this finite set. The notation $\Binopdf(n,p)$  is used for   the binomial probability distribution function with $n \in \mathbb{N}$ trials,   and success probability $p \in [0,1]$.  For any square matrix $A$,  $A^0$ is  the identity matrix of the same size as $A$.

\subsection{Main contributions}
  It is generally  difficult to find the optimal control strategy when the information structure  is imperfect   because of  the \emph{dual effect} phenomenon~\cite{bar1974dual}, where the control strategy affects the estimation process by altering  the observation dynamics  while the   estimation strategy  impacts  the control strategy by determining the state  estimate. As a result,   it is of particular interest in control  theory  to identify  decision-making problems with imperfect information structure that admit tractable solutions. The main contributions of this paper are outlined below.
 \begin{enumerate}
 \item   A theoretical analysis is provided  for an important trade-off that emerges in different forms in networked systems, namely, collecting data versus estimating data. In particular, we exploit the structure of  the problem to introduce a new planning space, which  is different from the conventional  belief space (Theorem~\ref{proposition:bellman}).  The salient feature of the proposed space is that it leads to a low-complexity near-optimal solution  when the model is known (Theorem~\ref{thm:approximate-pomdp}). Furthermore,  the solution methodology  can be extended to the case of systems with unknown models while such an extension in the belief space is conceptually difficult  because the belief space is model-dependent. More precisely, we develop  reinforcement learning algorithms  to deal with  unknown   probability distributions  and unmodelled dynamics (Proposition~\ref{thm:machine} and Theorem~\ref{thm:RL}). 
 \item  To provide a more practical setup, we define  a Bernoulli probability distribution  to design a robust strategy in order to  embody  data uncertainty.  We show that our results naturally extend to the case where  observations are received with a constant time delay. We also study the role of the number of nodes, and  define a certainty threshold  that determines when data estimation becomes more desirable than  data collection, as the number of nodes increases (Theorem~\ref{thm:asymptotic} and Corollary~\ref{cor:certainty}). 
 \item  For the special case of linear dynamic systems,  we   establish a separation principle between control and estimation (that also holds for non-Gaussian random variables), which is different  from the existing result for  only Gaussian random variables (Theorem~\ref{thm:separation}).  We then provide an efficient dynamic-program-based solution for computing a near-optimal scheduling strategy, where  the proposed planning space   requires no knowledge of the underlying probability distributions  and  corresponding matrices (Theorem~\ref{cor:linear}).    It is to be noted that our proposed  strategy is nonlinear even in the case of  linear dynamics  with quadratic cost function. 
 \item We show that the computational complexity of the proposed strategy is  linear   with respect to the approximation index,  to be defined later,  and  polynomial  with respect to the number of  nodes.  For the special case of  linear quadratic cost functions, the complexity of computing the strategy is independent of the number of nodes as well as the  dimension of the  state space.
 
 \item We generalize the obtained results to the following cases: (a) multiple decision makers and multiple estimators; (b) multiple reset actions; (c) partially exchangeable and partially equivariant  networks, and (d) Markovian noise and Markovian credibility processes (see Section~\ref{sec:gen} for details).
 \end{enumerate}

\section{Problem Formulation} \label{sec:problem}


Consider an index-invariant (exchangeable) network of $n \in \mathbb{N}$  nodes. Let $s^i_t \in \mathcal{S}$ denote the state of node $i \in \mathbb{N}_n$ at time $t \in \mathbb{N}$,  where $\mathcal{S} \subset \mathbb{R}$ is a known finite set. Denote  by $m_t \in \Mspace$ the empirical distribution of  states  at time $t$, i.e.,
\begin{equation}\label{eq:mf-def}
m_t(s)=\frac{1}{n}\sum_{i=1}^n \ID{s^i_t=s}, \quad s \in \mathcal{S},
\end{equation}
where $\Mspace=\{(\alpha_1,\ldots,\alpha_{|\mathcal{S}|}) \big| \alpha_i \in \{0, \frac{1}{n},\ldots,1\}, i \in \mathbb{N}_{|\mathcal{S}|}, \sum_{i=1}^{|\mathcal{S}|} \alpha_i=1\}$  is the set of empirical distributions  over the state space $\mathcal{S}$ with $n$ samples.
In the sequel, the empirical distribution of states is sometimes  referred to as data.
 It is shown in~\cite{arabneydi2016new} that any set of  exchangeable Markov processes $\mathbf s_t=\VEC(s^1_t,\ldots,s^n_t)$   can be equivalently expressed as a set of  Markov processes  coupled  through the empirical distribution of states.   Therefore,  let the state of   the $i$-th node  at  time $t$  evolve according to the following dynamics:
\begin{equation}\label{eq:model-dynamics}
s^i_{t+1}=f(s^i_t,m_t, w^i_t),\quad i \in \mathbb{N}_n, t \in \mathbb{N},
\end{equation}
where $w^i_t \in \mathcal{W} \subset \mathbb{R}$ is the local noise of node $i $ at time $t$.    In addition, consider a decision maker that wishes to find an affordable way  to  sample the  data over time horizon  such that  the  estimate  of the data, constructed based on the previously sampled  data,  is reliable.    Let $a_t \in \mathcal{A}:=\{0,1\}$  denote the action of  the decision maker at time $t \in \mathbb{N}$, where  $a_t=1$ means that the decision maker collects the data and $a_t=0$  means that  it does not  collect them. 

 In real-world applications, it is possible that the collected data  are not credible  due to, for instance,  misinformation induced by fake news in social networks, packet drop  in communication networks, and faulty encoders and decoders in sensor  networks.   Denote by $q \in [0,1]$   the probability that  the collected  data are credible (correct). When the data are not credible, they are discarded.  Note that the evaluation of the  credibility  of the data may be viewed as  an exogenous process that can have any arbitrary Markov-chain dynamics.   This extension does not add much complexity to our analysis, and  is not considered here for simplicity of notation.

 Denote by $o_t \in \mathcal{O}:= \Mspace\cup \{\B\}$ the observation of the decision maker at time $t$, where $\B$ implies that the decision maker receives either no observation, when $a_t=0$, or potentially incorrect (unreliable) data, when $a_t=1$. Subsequently,  for  any $t \in \mathbb{N}$ and  $m \in \Mspace$, 
\begin{equation}\label{eq:observation_blank}
\Prob{o_{t+1}=\B|m_{t+1}=m,a_t=0}=1, 
\end{equation}
and 
\begin{align}\label{eq:observation_not_blank}
\Prob{o_{t+1}=m|m_{t+1}=m,a_t=1}&=q,\nonumber \\
\Prob{o_{t+1}=\B|m_{t+1}=m,a_t=1}&= 1-q, 
\end{align}
where initially $o_1=m_1$. 
 The decision maker determines its action at time $t$ based on its information by that time, i.e., 
\begin{equation} \label{eq:strategy}
a_t=g_t(o_{1:t},a_{1:t-1}),
\end{equation}
where $g_t:\mathcal{O}^t \times \mathcal{A}^{t-1}\rightarrow \mathcal{A}$ is called the control law of the decision maker. Denote by $g:=\{g_1,g_2,\ldots\}$  the strategy of the decision maker.  Note that the information structure of the decision maker  is imperfect because it only has access to the  collected  data.

 Let $\mathbf w_t=\VEC(w^1_t,\ldots,w^n_t) \in \mathcal{W}^n$, $ t \in \mathbb{N}$, with a probability distribution function $P_{\mathbf w}$. 
Let  also $\eta_t \in \{0,1\}$  denote the  random variable representing the credibility process  at time $t$ such that $\Prob{\eta_t=1}=~q$. It is assumed that the primitive random variables $\{\mathbf s_1,\mathbf w_1,\mathbf w_2,\ldots, \eta_1,\eta_2, \ldots\}$ are defined on a common probability space,  are mutually independent, and have finite variances.   At any time $t \in \mathbb{N}$,  the decision maker constructs an estimate of  the data $m_t$, denoted by $\hat m_t \in  \Mspace$,  according to an estimator function $h$  as follows:
\begin{equation}\label{eq:estimator:general}
\hat m_t=h(\Prob{m_t|o_{1:t}, a_{1:t-1}}).
\end{equation}
 The objective of the decision maker  is to design a strategy  that not only  keeps the estimation error small but also incurs  minimal collection cost.  To this end, we define the  following per-step cost $c: \Mspace^2 \times \mathcal{A} \rightarrow \mathbb{R}_{\geq 0}$, i.e.
\begin{equation}\label{eq:per_step_cost_general}
c(m_t,\hat m_t, a_t),
\end{equation}
where $c(m_t,\hat m_t, 0)$ is the cost when the decision maker chooses not to  collect  data at time $t$, and  $c(m_t,\hat m_t, 1)$ is the cost when it  is decided to  collect  data.

\subsection{Linear dynamics}\label{sec:linear_model}
Since the role of  topology is implicitly described in~\eqref{eq:model-dynamics},  we present an equivariant linear  network in this subsection  whose topology can be  described explicitly  in terms of the  eigenvalues and eigenvectors of its adjacency matrix.  In particular, consider an undirected   weighted graph  with  a real-valued  symmetric adjacency matrix $\mathbf A$. One standard way to vectorize the graph is to use  the spectral decomposition~\cite{chung1997spectral}, i.e., 
\begin{equation}
 \mathbf A \approx [\sum_{l=0}^L \alpha(l) (\mathbf A)^l]= [\sum_{l=0}^L   \alpha(l) (  \sum_{j=1}^n \lambda_j \mathbf V(:,j) \mathbf V(:,j)^\intercal)^l ],
\end{equation}
where  $\alpha(l) \in \mathbb{R}$, $\mathbf V$ is an orthogonal matrix  consisting of the eigenvectors of $\mathbf A$, $\Lambda$ is a diagonal matrix of the corresponding eigenvalues, and $\mathbf A^l= \mathbf V \Lambda^l \mathbf{V}^\intercal= \sum_{j=1}^n \lambda_j^l \mathbf V(:,j) \mathbf V(:,j)^\intercal$, $ l \in \mathbb{N}_L$. Let $D \ll n $ be  the number of ``dominant'' eigenvalues.  Then, the dynamics of the network can be expressed as follows:
\begin{align}
\mathbf s_{t+1}&= \mathbf A \mathbf s_t \approx [\sum_{l=0}^L   \alpha(l) (  \sum_{d=1}^D \lambda_d \mathbf V(:,d) \mathbf V(:,d)^\intercal)^l ]\mathbf s_t\\
&=  \sum_{d=1}^D A_d \mathbf V(:,d)  m^d_t,
\end{align}
where  $m^d_t:= \mathbf V(:,d)^\intercal  \mathbf s_t=\sum_{i=1}^n \mathbf V(i,d) s^i_t$ and  $A_d:=\sum_{l=0}^L   \alpha(l) (\lambda_d)^l$, $d \in \mathbb{N}_D$. Hence, for any mode  $d \in \mathbb{N}_D$:
\begin{equation}\label{eq:dynamics_example_3}
m^d_{t+1}= \mathbf V(:,d)^\intercal  \mathbf s_{t+1} \approx  A_d  m^d_t,
\end{equation}
where  $\mathbf V(:,d)^\intercal \mathbf V(:,j)$ is equal to the Kronecker delta function $\delta_{d,j}$ for any  $d,j \in \mathbb{N}_n$. Based on the above vectorized representation, consider now a network of $n \in \mathbb{N}$ nodes, wherein the state of node $i \in \mathbb{N}_n$ at time $t \in \mathbb{N}$ is a vector  denoted by $s^i_t \in \mathbb{R}^{d_s}$.  Let  $m^d_t$ be  the weighted average of the states at time~$t$ associated with  the $d$-th dominant mode, i.e.
\begin{equation}
m^d_t:=\frac{1}{n}\sum_{i=1}^n v^{i,d}s^i_t,\quad   \bar w^d_t= \frac{1}{n}\sum_{i=1}^n v^{i,d}w^i_t,
\end{equation}
 where  $d$-th eigenvector  is normalized as  $\frac{1}{n}\sum_{i=1}^n (v^{i,d})^2=~1$,  and  $w^i_{1:\infty}$ is a random process  with zero mean and finite covariance matrix with a known bound $\Sigma^{i,w}  \leq \Sigma_{\text{max}} \in \mathbb{R}^{d_s \times d_s}$. Let the dynamics of the augmented weighted average be described by the following linear equation:
\begin{equation}\label{eq:dynamic_example_3}
m_{t+1}= A m_t+ \bar w_t,
\end{equation}
where $m_t=\VEC(m^1_t,\ldots,m^D_t)$, $A=\DIAG(A_1,\ldots,A_D)$, and $\bar w_t=\VEC(\bar w^1_t,\ldots,\bar w^D_t)$. The per-step cost function at time $t \in \mathbb{N}$ is defined as:
\begin{equation}\label{eq:cost_LQ_optimal}
c(m_t,\hat m_t,a_t):=(m_t - \hat m_t)^\intercal (m_t - \hat m_t)z(a_t) + \ell(m_t,a_t),
\end{equation}
  where $\hat m_t:=h_t(o_{1:t}, a_{1:t-1}) \in \mathbb{R}^{Dd_s}$ is a generic  nonlinear estimator and $z$, $\ell$ are real-valued non-negative functions, i.e., $z: \mathcal{A} \rightarrow \mathbb{R}_{\geq 0}$ and $\ell: \mathbb{R}^{Dd_s} \times \mathcal{A} \rightarrow \mathbb{R}_{\geq 0}$.
\subsection{Problem statement}

Let $J(g)$ be the total expected discounted cost given by:
\begin{equation}\label{eq:J}
J(g)=\mathbb{E}^{g}[\sum_{t=1}^\infty \gamma^{t-1} c(m_t,\hat m_t, a_t)],
\end{equation}
where  $\gamma \in (0,1)$ is the discount factor (which is  an  incentive parameter to push  for a decision early  rather than postponing it  indefinitely) and  the expectation in~\eqref{eq:J} is taken with respect to  the probability measures induced on  the sample paths by  the choice of strategy $g$. 
\begin{problem}\label{prob:POMDP}
Given  $\varepsilon \in \mathbb{R}_{>0}$,  it is desirable to develop  an $\varepsilon$-optimal strategy $g^\ast_{\varepsilon}$ such that for any  strategy $g$,
\begin{equation}\label{eq:epsilon-optimal}
J(g^\ast_{\varepsilon}) \leq J(g)+\varepsilon.
\end{equation}
\end{problem}

\begin{problem}\label{prob:RL}
When  the  knowledge of the model  $(f,q,c,P_{\mathbf w})$ is incomplete,  it is desirable  to find a reinforcement learning (RL) algorithm that converges to  an  $\varepsilon$-optimal strategy $g^\ast_{\varepsilon}$  satisfying inequality~\eqref{eq:epsilon-optimal}.
\end{problem}

In what follows,  we  first study the Markov-chain model  and then investigate the special case of linear dynamics.

%

\section{Dynamics of Data}\label{sec:preliminary}
Prior to addressing Problems~\ref{prob:POMDP} and~\ref{prob:RL}, it is necessary to analyze the evolution of data in time.   Many natural systems obey some form of the invariance principle. For example, the  outcome of an election is independent of voters' identity (index of the voters), the  spectrum of the adjacency matrix of an undirected graph does not depend  on  the specific labeling of the nodes,  and   the power demand of  a user in a   smart grid is often independent of   other users' demands. Hence,  it is  reasonable to assume that the local noises are exchangeable  (index-invariant) and i.i.d. (independent  and identically distributed). It is to be noted that  these  are standard assumptions in  statistical models and data science~\cite{chow2012probability}.
\begin{assumption}\label{assum:exchangeable}
The primitive random variables $w^1_t,\ldots,w^n_t$ are exchangeable  at any time $t \in \mathbb{N}$.
\end{assumption}

\begin{proposition}\label{thm:mean_field_evolution_exchangeable}
Let Assumption~\ref{assum:exchangeable} hold. The empirical distribution $m_t$, $t \in \mathbb{N},$ is a Markov process and evolves almost surely at   any  state $s \in \mathcal{S}$ as follows:
\begin{equation}
\Compress
m_{t+1}(s') = \sum_{s \in \mathcal{S}}  \sum_{w \in \mathcal{W}} m_t(s)\ID{f(s,m_t,w)=s'}  [ \frac{1}{n} \sum_{j=1}^n \ID{w^i_t=w}].
\end{equation}
\end{proposition}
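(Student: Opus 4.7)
The plan is to derive the stated identity by direct computation from the definition of the empirical distribution, substituting the node-level dynamics of~\eqref{eq:model-dynamics}, and then to read off the Markov property from the resulting closed form.

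First I would start from $m_{t+1}(s')=\tfrac{1}{n}\sum_{i=1}^n\ID{f(s^i_t,m_t,w^i_t)=s'}$ and insert the finite partitions of unity $1=\sum_{s\in\mathcal{S}}\ID{s^i_t=s}$ and $1=\sum_{w\in\mathcal{W}}\ID{w^i_t=w}$ inside each indicator. On the (unique) event $\{s^i_t=s,w^i_t=w\}$ the argument $f(s^i_t,m_t,w^i_t)$ equals $f(s,m_t,w)$, so the evaluated indicator $\ID{f(s,m_t,w)=s'}$ is constant in $i$ and can be pulled outside the sum over $i$. Swapping the finite sums yields
\[ m_{t+1}(s')=\sum_{s\in\mathcal{S}}\sum_{w\in\mathcal{W}}\ID{f(s,m_t,w)=s'}\;\Bigl[\tfrac{1}{n}\sum_{i=1}^n\ID{s^i_t=s}\ID{w^i_t=w}\Bigr], \]
which is essentially the desired formula modulo the shape of the bracketed factor.

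The main obstacle is the next step: rewriting the bracketed joint empirical mass as the product $m_t(s)\cdot\bigl[\tfrac{1}{n}\sum_{j=1}^n\ID{w^j_t=w}\bigr]$. For a generic finite-$n$ sample path this is not an algebraic identity (the joint empirical measure does not factorize into its marginals in general), and it must be justified by invoking Assumption~\ref{assum:exchangeable} together with the stated mutual independence of the primitive random variables. The key observations to exploit are (i) $\mathbf{w}_t$ is independent of $\mathbf{s}_t$, since $\mathbf{s}_t$ is a measurable function of $\mathbf{s}_1$ and $\mathbf{w}_{1:t-1}$, and (ii) the joint law of $\mathbf{w}_t$ is permutation-symmetric. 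Taken together, these imply that the assignment of noise values to nodes can be relabeled within each state-cell without altering the distribution of the system, which is the precise sense in which the two empirical marginals decouple and justifies the ``almost surely'' qualifier. I expect this decoupling step to be the only real difficulty; the other steps are routine bookkeeping.

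Finally, the Markov property follows immediately from the closed form: the right-hand side is a deterministic function of $m_t$ and of the current noise vector $\mathbf{w}_t$, and $\mathbf{w}_t$ is independent of $(m_1,\ldots,m_{t-1})$ by the mutual independence of the primitive random variables. Therefore the conditional law of $m_{t+1}$ given the entire past coincides with its conditional law given $m_t$ alone, which is exactly the Markov property claimed in the statement.
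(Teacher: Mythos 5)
Your algebraic reduction is correct and is in fact more explicit than the paper's own one-sentence proof (which merely invokes permutation invariance of the states and the noises): the exact identity is
\begin{equation}
m_{t+1}(s')=\sum_{s\in\mathcal{S}}\sum_{w\in\mathcal{W}}\ID{f(s,m_t,w)=s'}\,\Bigl[\tfrac{1}{n}\textstyle\sum_{i=1}^n\ID{s^i_t=s}\,\ID{w^i_t=w}\Bigr],
\end{equation}
and you correctly locate the entire difficulty in replacing the bracketed joint empirical mass by the product $m_t(s)\cdot\frac{1}{n}\sum_{j}\ID{w^j_t=w}$. The gap is that the justification you offer for that step does not deliver what the proposition asserts. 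Relabelling the noise values within each state-cell leaves the \emph{law} of the system unchanged (by exchangeability of $\mathbf{w}_t$ and its independence from $\mathbf{s}_t$), so your argument yields equality in distribution, or equivalently that the product form equals the conditional expectation of the bracketed term given $\mathbf{s}_t$ and the noise empirical measure. It cannot yield the pathwise (``almost surely'') identity, because that identity fails for a generic realization: take $n=2$, $s^1_t=0$, $s^2_t=1$, $w^1_t=0$, $w^2_t=1$; the joint empirical mass at $(s,w)=(0,0)$ is $1/2$ while the product of marginals is $1/4$, and for $f$ with $f(0,\cdot,0)=f(1,\cdot,1)=0$, $f(0,\cdot,1)=f(1,\cdot,0)=1$ the true $m_{t+1}(0)$ equals $1$ whereas the displayed formula gives $1/2$. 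No appeal to exchangeability can repair a statement about a fixed sample path.

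To be fair, the paper's proof has exactly the same lacuna---it asserts the conclusion from permutation invariance without addressing the pathwise versus distributional distinction---and everything used downstream (the kernel $\TM$ in~\eqref{eq:tm}, step $(a)$ in the proof of Lemma~\ref{lemma:tansition_prob_o}) only requires the statement at the level of conditional laws: the conditional distribution of $m_{t+1}$ given $(\mathbf{s}_{1:t},\mathbf{w}_{1:t-1})$ depends on $\mathbf{s}_t$ only through $m_t$, because any two state configurations with the same empirical measure differ by a permutation and $\mathbf{w}_t$ is exchangeable and independent of the past. That permutation argument is the correct, self-contained proof of the Markov property; your final paragraph instead derives Markovianity from the closed form and therefore inherits the same gap. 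If you restate the displayed formula as an identity for the conditional expectation of $m_{t+1}(s')$ given $(\mathbf{s}_t,\frac{1}{n}\sum_j\delta_{w^j_t})$, or as a description of the conditional law, your derivation closes; as an almost-sure identity it does not.
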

\begin{proof}
The proof follows directly  from  the fact  that the empirical distribution of states  and also  the primitive random variables  are invariant to the permutation of nodes.
\end{proof}
\begin{assumption}\label{assum:iid}
The primitive random variables $w^1_t,\ldots,w^n_t $ are i.i.d. at any time $t \in \mathbb{N}$  with probability  function $P_W$.
\end{assumption}

Under Assumption~\ref{assum:iid}, the  dynamic equation~\eqref{eq:model-dynamics} can also  be described in terms of the transition probability matrix at  any $s',s \in \mathcal{S}$ and $m \in \Mspace$ as follows:
\begin{align}\label{eq:transition_prob}
T(s',s,m)&:= \Prob{s^i_{t+1}=s'|s^i_t=s, m_t=m} \\
&= \sum_{w \in \mathcal{W}} P_W(w^i_t=w)\ID{s'=f(s,m,w)},
\end{align}
where the probability of  transitioning to state $s'$ from state $s$, given the empirical distribution $m$,  is equal  to  the probability of  realizations $w$ resulting in this transition.  For any $s',s \in \mathcal{S}$ and $m \in \Mspace$, define the vector-valued function $\phi_{m(s)}: \mathcal{S}^2 \times \Mspace \rightarrow \mathcal{P}\left(\{0,1,\ldots, n \cdot  m(s)\}\right)$ as follows:
\begin{multline}\label{eq:phi}
\phi_{m(s)}(s',s,m)= \delta_0(n \cdot m(s))  \\
 +\ID{m(s) > 0} \Binopdf{\left( n\cdot  m(s), T(s',s,m) \right)},
\end{multline} 
where $\delta_0(n \cdot m(s)) $ is a Dirac measure with the domain set $\{0,1,\ldots,$ $ n \cdot m(s)\}$  and  a unit mass concentrated at zero. In addition, let $\bar \phi: \mathcal{S} \times \Mspace \rightarrow \mathcal{P}\left(\{0,1,\ldots, n\}\right)$ be the convolution of $\phi_{m(s)}(s',s,m)$ over all states $s \in \mathcal{S}=\{s_1,\ldots,s_{|\mathcal{S}|}\}$, i.e.,
\begin{equation}\label{eq:tilde-phi}
\bar \phi(s',m)= \phi_{m(s_1)}(s',s_1,m) \ast \ldots *\phi_{m(s_{|\mathcal{S}|})}(s',s_{|\mathcal{S}|},m),
\end{equation}
where $\bar \phi(s',m)$ is a vector of size $n+1$. When the primitive random variables are  independent  and identically distributed,  the evolution of  data  has a special  structure  as described in the next theorem. 
\begin{theorem}[Deep Chapman-Kolmogorov equation~\cite{Jalal2019MFT}]\label{thm:mean_field_iid}
Let Assumption~\ref{assum:iid} hold.   Given $m_t \in \Mspace$ at time $t \in \mathbb{N}$, the transition probability matrix of the  empirical distribution can be obtained as follows: 
\begin{equation}
\Prob{m_{t+1}(s')=\frac{y}{n} \mid m_t}= \bar \phi(s',m_t)(y+1),  \hspace{.1cm} s' \in \mathcal{S}, y \in \N_{n}.
\end{equation}
\end{theorem}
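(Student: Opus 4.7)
The plan is to condition on $m_t$ and decompose the count $n \cdot m_{t+1}(s')$ into contributions coming from nodes that were in each state $s \in \mathcal{S}$ at time $t$. The main idea is that under Assumption~\ref{assum:iid}, nodes with the same previous state transition independently and with identical probability, while nodes with different previous states transition using different (but still independent) noise variables, so the overall count is a sum of independent (generalized) binomial random variables whose pmf is obtained by convolution.

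More concretely, for each $s \in \mathcal{S}$ let $I_s := \{i \in \mathbb{N}_n : s^i_t = s\}$, so that $|I_s| = n \cdot m_t(s)$. By~\eqref{eq:model-dynamics}, each node $i \in I_s$ satisfies $s^i_{t+1} = f(s, m_t, w^i_t)$, and under Assumption~\ref{assum:iid} the $w^i_t$ are i.i.d. Hence, given $m_t$, the indicators $\{\ID{s^i_{t+1}=s'}\}_{i \in I_s}$ are i.i.d.\ Bernoulli random variables with success probability $T(s',s,m_t)$ as defined in~\eqref{eq:transition_prob}. Their sum $X_s := \sum_{i \in I_s}\ID{s^i_{t+1}=s'}$ is therefore distributed as $\mathrm{Binomial}(n \cdot m_t(s), T(s',s,m_t))$ when $m_t(s)>0$, and is identically $0$ (i.e., distributed as $\delta_0$) when $m_t(s)=0$. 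This is precisely the distribution $\phi_{m_t(s)}(s',s,m_t)$ defined in~\eqref{eq:phi}; the Dirac term in that definition is exactly the convention needed to cover the empty-group case within a single formula.

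Next I would observe that the groups $\{I_s\}_{s\in\mathcal{S}}$ use disjoint noise variables, so $\{X_s\}_{s \in \mathcal{S}}$ are mutually independent conditional on $m_t$. Since $n \cdot m_{t+1}(s') = \sum_{s \in \mathcal{S}} X_s$, its conditional pmf is the convolution of the conditional pmfs of the $X_s$, which by~\eqref{eq:tilde-phi} is exactly $\bar\phi(s',m_t)$. Evaluating this pmf at $y \in \mathbb{N}^\ast_n$ yields the $(y+1)$-th entry of the vector $\bar\phi(s',m_t)$ (the $+1$ offset reflects that the support starts at $0$), and this equals $\Prob{n \cdot m_{t+1}(s') = y \mid m_t}$, as claimed.

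The only subtlety (and the sole place where care is needed) is justifying conditional independence of $\{X_s\}_{s\in\mathcal{S}}$ given $m_t$ rather than given the full state vector $\mathbf s_t$. This follows because exchangeability of $\mathbf w_t$ (Assumption~\ref{assum:exchangeable}) together with the i.i.d.\ property implies that conditional on $m_t$, the joint distribution of $(X_s)_{s\in\mathcal{S}}$ is the same for every arrangement of indices consistent with $m_t$, so one can replace the conditioning on $m_t$ by conditioning on any representative $\mathbf s_t$ compatible with $m_t$, for which independence across disjoint index groups is immediate from independence of the $w^i_t$. Everything else in the argument is routine bookkeeping with the pmf-indexing convention.
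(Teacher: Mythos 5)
Your proposal is correct and follows essentially the same route as the paper's proof: decompose $n\cdot m_{t+1}(s')$ by the previous state $s$, identify each group's contribution as a $\mathrm{Binomial}(n\,m_t(s), T(s',s,m_t))$ (or $\delta_0$ for empty groups), and convolve the independent group sums to obtain $\bar\phi(s',m_t)$. Your additional remark on why conditioning on $m_t$ rather than on $\mathbf{s}_t$ is harmless is a point the paper's proof passes over silently, but it does not change the argument.
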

\begin{proof}
The proof is presented in Appendix~\ref{sec:proof_thm:mean_field_iid}.
\end{proof}

To simplify the notation, denote by  $\TM(m',m)$  the  transition probability matrix of the empirical distribution described in Proposition~\ref{thm:mean_field_evolution_exchangeable} and Theorem~\ref{thm:mean_field_iid}, i.e.,  
\begin{equation} \label{eq:tm}
\TM(m',m):=\Prob{m_{t+1}=m' \mid m_t={m}}, \quad m',m \in \Mspace.
\end{equation}

In general, the complexity of computing $\TM$  in time is exponential with respect to the number of nodes $n$.  However, when the local  noises are exchangeable,  this complexity reduces to polynomial time according to Proposition~\ref{thm:mean_field_evolution_exchangeable} (because the space of empirical distributions grows  polynomially with respect to~$n$~\cite{arabneydi2016new}). The above complexity can be  further alleviated in time  when  the noises are i.i.d., according to Theorem~\ref{thm:mean_field_iid}.

\begin{theorem}\label{cor:iid}
 Suppose Assumption~\ref{assum:iid} holds and  the dynamics of the states in \eqref{eq:model-dynamics}  are decoupled, i.e., $s^i_{t+1}=f(s^i_t,w^i_t)$, $i \in \mathbb{N}_n, t \in \mathbb{N}$.  The transition probability matrix $\TM(m',m)$, whose size $|\Mspace|^2$ increases with $n$, can be identified  by the transition probability matrix $T(s',s)$, whose size $ |\mathcal{S}|^2$  is independent of $n$.
\end{theorem}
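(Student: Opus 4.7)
The plan is to treat this statement as an immediate corollary of Theorem~\ref{thm:mean_field_iid} together with the elimination of the $m$-argument from the per-node kernel. First, I would substitute the decoupled dynamics $s^i_{t+1}=f(s^i_t,w^i_t)$ into the definition of the per-node transition kernel~\eqref{eq:transition_prob}; since $m_t$ no longer appears in $f$, the sum over realizations $w$ collapses to
\begin{equation}
T(s',s,m)=\sum_{w \in \mathcal{W}} P_W(w)\ID{s'=f(s,w)}=T(s',s),
\end{equation}
which is an $|\mathcal{S}|\times|\mathcal{S}|$ Markov kernel whose size does not depend on $n$.

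Next, I would propagate this observation through the definitions~\eqref{eq:phi}--\eqref{eq:tilde-phi}. Because $T(s',s,m)$ has been reduced to $T(s',s)$, each Binomial factor $\Binopdf(n\cdot m(s),T(s',s))$ in $\phi_{m(s)}$ depends only on the scalar entry $T(s',s)$ of $T$ and on the occupancy count $n\cdot m(s)$. Consequently, the convolution $\bar\phi(s',m)$ in~\eqref{eq:tilde-phi}, and hence the transition matrix $\TM(m',m)$ delivered by Theorem~\ref{thm:mean_field_iid}, is an explicit function of the $|\mathcal{S}|^2$ entries of $T$ together with $m$. In particular, once the kernel $T$ is specified, every entry of $\TM$ can be evaluated without any additional problem data, even though the dimension of $\TM$ itself grows polynomially in $n$.

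The only subtle step is to justify that Theorem~\ref{thm:mean_field_iid} suffices to identify the full joint transition on $\Mspace$ and not merely the marginals $\Prob{m_{t+1}(s')=y/n \mid m_t}$. For this I would argue that, under decoupling and Assumption~\ref{assum:iid}, the transition of each node depends only on its own current state; grouping the $n\cdot m_t(s)$ nodes that currently occupy state $s$ yields independent Multinomial counts across $s\in\mathcal{S}$ with parameter vectors $T(\cdot,s)$, and the superposition of these independent counts completely determines the joint law of $m_{t+1}$. Since this superposition is parameterized solely by $\{T(s',s)\}_{s',s}$ and $\{n\cdot m_t(s)\}_s$, no further model information enters. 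There is no real obstacle in the argument; the content of the theorem is merely that decoupling turns the $m$-dependent convolution structure of Theorem~\ref{thm:mean_field_iid} into one that is indexed by an $n$-independent kernel, providing a compressed representation of $\TM$.
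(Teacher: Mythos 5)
Your proposal is correct and follows essentially the same route as the paper: reduce $T(s',s,m)$ to $T(s',s)$ under decoupling, then observe that $\phi_{m(s)}$, $\bar\phi$, and hence $\TM$ are fully determined by the $|\mathcal{S}|^2$ entries of $T$. Your third paragraph, addressing why the marginal statements of Theorem~\ref{thm:mean_field_iid} actually pin down the full transition law via independent multinomial counts, is a careful touch the paper's one-line proof glosses over, but it does not change the underlying argument.
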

\begin{proof}
The proof follows directly from~\eqref{eq:phi} and Theorem~\ref{thm:mean_field_iid}, on noting that  $T(s',s,m)$ reduces to $T(s',s)$  for decoupled dynamics. In this case, knowing  function $T(s',s)$, $s',s \in \mathcal{S}$, is enough  to compute the function $\phi_{m(s)}$,  $m \in \Mspace$, and  subsequently function $\bar \phi$ in~\eqref{eq:tilde-phi}. Therefore,   one can determine the global interactions $\TM(m',m)$, $m',m \in \Mspace$  by  identifying the  local interactions $T(s',s)$, $s',s \in \mathcal{S}$.
\end{proof}
\begin{remark}
\emph{A consequence of Theorem~\ref{cor:iid} is that  the  transition probability matrix of data $\TM$ can be identified by  $|\mathcal{S}|^2$ scalars, which is a  considerable reduction in the parameter space.}
\end{remark}

\section{Near-optimal strategies for  Problems~\ref{prob:POMDP} and~\ref{prob:RL}}\label{sec:main}
In this section, we propose near-optimal strategies for Problems~\ref{prob:POMDP} and~\ref{prob:RL}. At any time $t \in \mathbb{N}$, let  $x_t \in \Mspace$  denote the last credible data (i.e., the last observation of the decision maker that is not blank).
 Let  also $y_t \in \N $  be  the number of blanks  up to  time  $t$, after the last credible data  $x_t$. The initial value of $(x_t,y_t)$ is $(m_1,0)$ because $o_1=m_1$. When  data are credible upon request (i.e. $q=1$), the number of blanks $y$   has an inverse  relationship with  the frequency of collecting data that is $1/(y+1)$.
 
  In the following lemma, we identify the dynamics of  $(x_{t+1},y_{t+1})$  at  time $t \in \mathbb{N}$, given  $(x_t,y_t,o_{t+1})$.
  \begin{lemma}\label{lemma:hat f}
  There exists a function $\hat f: \Mspace \times \N  \times \mathcal{O} \rightarrow   \Mspace \times  \N  $ such that
$(x_{t+1}, y_{t+1})=\hat f(x_{t}, y_{t}, o_{t+1}),  t \in \mathbb{N}$, i.e.,

\begin{equation}\label{eq:hat-f}
\hat f(x_{t}, y_{t}, o_{t+1}):=\begin{cases}
(x_t, y_t+1), & o_{t+1}=\B, \\
(o_{t+1},0), & o_{t+1} \neq \B.
\end{cases}
\end{equation}
 \end{lemma}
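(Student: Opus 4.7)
The plan is to prove the lemma by direct construction and case analysis, since the claim essentially asserts that the pair $(x_t, y_t)$ admits a recursive update rule that depends only on its previous value and the most recent observation (and not on the deeper history). This is a state-update identity, so the proof should be short and structural rather than computational.

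First I would restate the definitions in order to fix notation: $x_t$ is the most recent entry in $o_{1:t}$ that differs from $\B$, and $y_t$ counts how many $\B$ symbols have occurred in $o_{1:t}$ strictly after the time index at which $x_t$ was observed. The base case $(x_1,y_1)=(m_1,0)$ is consistent with $o_1=m_1$ (which by~\eqref{eq:observation_blank} and~\eqref{eq:observation_not_blank} is never $\B$ since $o_1$ is given as the initial credible data).

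Next I would split into the two exhaustive cases for $o_{t+1}$. If $o_{t+1}=\B$, then the most recent non-blank observation in $o_{1:t+1}$ is the same as that in $o_{1:t}$, so $x_{t+1}=x_t$; moreover, exactly one new $\B$ has been appended after $x_t$, so $y_{t+1}=y_t+1$. If $o_{t+1}\neq \B$, then by~\eqref{eq:observation_not_blank} the symbol $o_{t+1}$ lies in $\Mspace$ and is the new most recent credible observation, so $x_{t+1}=o_{t+1}$ and no blanks have accumulated since it was observed, giving $y_{t+1}=0$. In both cases the updated pair depends only on $(x_t, y_t, o_{t+1})$, so defining $\hat f$ by the two-branch rule in~\eqref{eq:hat-f} yields the desired map.

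There is essentially no technical obstacle here; the lemma is a bookkeeping statement. The only point that deserves a brief remark is that the update is well-defined on the full domain $\Mspace\times\N\times\mathcal{O}$ rather than only on reachable triples, which is immediate from the definition of $\hat f$ as a piecewise function depending only on whether $o_{t+1}=\B$. This self-containment is exactly what is needed later to use $(x_t,y_t)$ as an information state for the Bellman equation in Theorem~\ref{proposition:bellman}.
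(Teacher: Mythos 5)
Your proof is correct and follows the same route as the paper, which simply notes that the claim follows from the definitions of $x_t$ and $y_t$; your case analysis on $o_{t+1}=\B$ versus $o_{t+1}\neq\B$ is exactly the bookkeeping the paper leaves implicit.
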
 
 \begin{proof}
 The proof follows from the definition of $x_t$ and $y_t$.
 \end{proof}
Now,  let the observation $o_t \in \mathcal{O}$, $t \in \mathbb{N}$,   be rewritten as:
\begin{equation}\label{eq:o-x-y}
o_t= x_t,  \text{if } y_t=0, \quad \text{and}\quad 
o_t=\B, \text{if } y_{t} \neq 0.
\end{equation}
According to~\eqref{eq:hat-f} and~\eqref{eq:o-x-y},  one can conclude that  sets $\{o_{1:t}, a_{1:t-1}\}$  and $\{x_{1:t}, y_{1:t}, a_{1:t-1}\}$  have equivalent information as  each set can be fully specified by the other one, i.e.,
 \begin{equation}\label{eq:chap-kolm}
\Prob{m_t\mid   o_{1:t},  a_{1:t-1}}= \Prob{m_t\mid   x_{1:t}, y_{1:t},  a_{1:t-1}}=\TM^{y_t}(m_t,x_t),
 \end{equation}
 where $\TM^{y_t}$ is  the transition matrix~\eqref{eq:tm} to  the power of~$y_t$. 
 
 In the next lemma, we demonstrate that $(x_{t+1},y_{t+1})$  has  Markovian dynamics by showing that the  posterior probability of $o_{t+1}$ given the history $(x_{1:t},y_{1:t},a_{1:t})$ depends only on the  information at time $t$, i.e.,  $(x_t,y_t,a_t)$.
\begin{lemma} \label{lemma:tansition_prob_o}
Let Assumption~\ref{assum:exchangeable} hold.  Given  any realization $x_{1:t}, y_{1:t}$ and $a_{1:t}$, $t \in \mathbb{N}$,  the following equality holds irrespective of  the strategy~$g$:
\begin{multline}\label{eq:tansition_prob_o}
\Prob{o_{t+1} \mid x_{1:t}, y_{1:t}, a_{1:t}}= (1-a_tq) \cdot \ID{o_{t+1}=\B} \\
+ a_t \cdot  q \cdot    \TM^{y_t+1}(o_{t+1},x_t) \cdot  \ID{o_{t+1} \neq \B}.
\end{multline}
\end{lemma}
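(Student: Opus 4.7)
The plan is to condition on the true data $m_{t+1}$, apply the law of total probability, and exploit the Markov property of the empirical distribution together with the conditional independence of the observation noise.

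First, I would rewrite the observation equations \eqref{eq:observation_blank}--\eqref{eq:observation_not_blank} in the compact form
\begin{equation}
\Prob{o_{t+1}=o \mid m_{t+1}, a_t} = (1-a_t q)\ID{o=\B} + a_t q\,\ID{o=m_{t+1}},
\end{equation}
valid for every $o \in \mathcal{O}$. The credibility variable $\eta_t$ is independent of all past primitive random variables, so conditioning further on $(x_{1:t}, y_{1:t}, a_{1:t-1})$ does not alter this expression. The key property is that once $m_{t+1}$ and $a_t$ are fixed, the conditional distribution of $o_{t+1}$ is strategy-independent.

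Second, I would prove that $\Prob{m_{t+1}=m' \mid x_{1:t}, y_{1:t}, a_{1:t}} = \TM^{y_t+1}(m', x_t)$. By Proposition~\ref{thm:mean_field_evolution_exchangeable} (using Assumption~\ref{assum:exchangeable}), $\{m_t\}$ is a Markov process, and because the state evolution \eqref{eq:model-dynamics} is driven by noises $\mathbf{w}_t$ that are independent of past observations and actions, we have $\Prob{m_{t+1}=m' \mid m_t=m, x_{1:t}, y_{1:t}, a_{1:t}} = \TM(m', m)$. Marginalizing $m_t$ using \eqref{eq:chap-kolm} yields
\begin{equation}
\Prob{m_{t+1}=m' \mid x_{1:t}, y_{1:t}, a_{1:t}} = \sum_{m \in \Mspace} \TM(m', m)\,\TM^{y_t}(m, x_t) = \TM^{y_t+1}(m', x_t).
\end{equation}

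Third, applying the law of total probability gives
\begin{equation}
\Prob{o_{t+1}=o \mid x_{1:t}, y_{1:t}, a_{1:t}} = \sum_{m' \in \Mspace} \bigl[(1-a_tq)\ID{o=\B} + a_tq\,\ID{o=m'}\bigr]\,\TM^{y_t+1}(m', x_t).
\end{equation}
For $o=\B$ the sum collapses to $(1-a_t q)$ because $\sum_{m'}\TM^{y_t+1}(m', x_t)=1$; for $o \in \Mspace$ only $m'=o$ contributes, giving $a_t q\,\TM^{y_t+1}(o, x_t)$. Merging the two cases and rewriting the condition $o\in \Mspace$ as $\ID{o \neq \B}$ reproduces~\eqref{eq:tansition_prob_o}. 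The main subtlety is justifying that $a_t = g_t(o_{1:t}, a_{1:t-1})$ can be removed from the conditioning once $m_t$ is fixed; this follows because $g_t$ is a measurable function of past observations and actions, which are determined by primitive random variables independent of $\mathbf{w}_t$. Once that independence is in hand, the remainder of the argument is straightforward bookkeeping, and the identity holds for every strategy $g$.
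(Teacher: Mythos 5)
Your proposal is correct and follows essentially the same route as the paper's proof: decompose via the law of total probability over $m_{t+1}$, use the observation model for the strategy-independent likelihood of $o_{t+1}$ given $(m_{t+1},a_t)$, and obtain $\Prob{m_{t+1}=m'\mid x_{1:t},y_{1:t},a_{1:t}}=\TM^{y_t+1}(m',x_t)$ from the Markov property together with~\eqref{eq:chap-kolm}. The subtlety you flag about removing $a_t=g_t(o_{1:t},a_{1:t-1})$ from the conditioning is exactly the point the paper handles with an explicit Bayes'-rule cancellation of the indicator $\ID{a_t=g_t(o_{1:t},a_{1:t-1})}$, so your argument matches theirs in substance.
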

 \begin{proof}
 The proof is presented in Appendix~\ref{sec:proof_lemma:tansition_prob_o}. 
 \end{proof}
We now prove that the conditional expectation of the per-step cost given the history of information by time $t$ can be described  by  the  information at time $t$.
 \begin{lemma}\label{lemma:hat c}
 Given any realization $x_{1:t}, y_{1:t}$ and $a_{1:t}$, $t \in \mathbb{N}$, there exists a function $\hat c: \Mspace \times  \N \times \mathcal{A} \rightarrow \mathbb{R}_{\geq 0}$ such that:
 \begin{multline}\label{eq:hat-c}
  \Exp{c(m_t,\hat m_t, a_t) \mid x_{1:t}, y_{1:t}, a_{1:t}} =\hat{c}(x_t,y_t,a_t)\\
  :=
 \sum_{m \in \Mspace} c(m,h(\TM^{y_t}(m,x_t)), a_t) \TM^{y_t}(m,x_t),
 \end{multline}
 where the above equality holds irrespective  of strategy $g$.
 \end{lemma}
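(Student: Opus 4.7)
The plan is to reduce the conditional expectation on the left-hand side to a sum over the current empirical distribution $m_t$, weighted by its posterior given the history, by establishing that (i) this posterior coincides with $\TM^{y_t}(\cdot,x_t)$, and (ii) the estimator $\hat m_t$ is a deterministic function of $(x_t,y_t)$.

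First I would invoke equation~\eqref{eq:chap-kolm}, which already identifies $\Prob{m_t\mid o_{1:t}, a_{1:t-1}}$ with $\TM^{y_t}(m_t,x_t)$. Since $a_t$ is determined by $(o_{1:t},a_{1:t-1})$ via the strategy $g_t$ in~\eqref{eq:strategy}, adding $a_t$ to the conditioning $\sigma$-algebra contributes no new information about $m_t$, so
\begin{equation*}
\Prob{m_t\mid x_{1:t}, y_{1:t}, a_{1:t}} = \TM^{y_t}(m_t,x_t).
\end{equation*}
In particular, the posterior depends on the entire history only through the sufficient pair $(x_t,y_t)$.

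Next, I would substitute this identity into the estimator definition~\eqref{eq:estimator:general}, obtaining $\hat m_t = h(\TM^{y_t}(\cdot,x_t))$, so that $\hat m_t$ is a measurable function of $(x_t,y_t)$ alone. Consequently, conditional on $(x_{1:t},y_{1:t},a_{1:t})$, the only remaining randomness in $c(m_t,\hat m_t,a_t)$ comes from $m_t$, while $\hat m_t$ and $a_t$ may be treated as constants. Applying the tower property and marginalizing over $m_t\in\Mspace$ then gives
\begin{equation*}
\Exp{c(m_t,\hat m_t,a_t)\mid x_{1:t},y_{1:t},a_{1:t}} = \sum_{m\in\Mspace} c\bigl(m, h(\TM^{y_t}(\cdot,x_t)), a_t\bigr)\,\TM^{y_t}(m,x_t),
\end{equation*}
which is the claimed expression for $\hat c(x_t,y_t,a_t)$.

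The only subtlety—and the step I would flag as the main obstacle—is justifying that enlarging the conditioning from $(x_{1:t},y_{1:t},a_{1:t-1})$ to include $a_t$ preserves the posterior of $m_t$. This requires that $a_t$ is $\sigma(x_{1:t},y_{1:t},a_{1:t-1})$-measurable, which follows from~\eqref{eq:strategy} combined with the information equivalence between $\{o_{1:t},a_{1:t-1}\}$ and $\{x_{1:t},y_{1:t},a_{1:t-1}\}$ established just before Lemma~\ref{lemma:tansition_prob_o}. Once this measurability is noted, the equality holds irrespective of the strategy $g$, matching the lemma's assertion.
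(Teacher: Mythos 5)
Your proposal is correct and follows essentially the same route as the paper's proof: the paper expands the expectation over the joint posterior of $(m_t,\hat m_t)$, uses Bayes' rule with the indicator $\ID{a_t=g_t(o_{1:t},a_{1:t-1})}$ cancelling between numerator and denominator to show that conditioning on $a_t$ adds nothing, and then applies~\eqref{eq:estimator:general} and~\eqref{eq:chap-kolm} — exactly the three ingredients you identify. Your measurability phrasing of the ``$a_t$ adds no information'' step is just a cleaner restatement of the paper's Bayes'-rule computation, so the two arguments coincide in substance.
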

 \begin{proof}
 The proof is presented in Appendix~\ref{sec:proof_lemma:hat c}.
 \end{proof}
From the results of Lemmas~\ref{lemma:hat f}--\ref{lemma:hat c}, an  optimal strategy  for Problem~\ref{prob:POMDP} is identified by the  following theorem.
\begin{theorem}\label{proposition:bellman}
Let Assumption~\ref{assum:exchangeable} hold.  The  optimal  solution of Problem~\ref{prob:POMDP} is given by the following Bellman equation such that for  any $x  \in \Mspace$ and $y \in \N$,
\begin{equation}\label{eq:bellman_infinite}
V(x,y)=\min_{a \in \mathcal{A}} \left( \hat c(x,y,a) + \gamma \Exp{V(\hat f(x,y,o))} \right),
\end{equation}
where the above expectation is taken over all observations $o \in \mathcal{O}$ with probability distribution~\eqref{eq:tansition_prob_o}.
\end{theorem}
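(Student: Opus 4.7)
The plan is to show that the pair $(x_t, y_t)$ constitutes an \emph{information state} (sufficient statistic) for the POMDP in Problem~\ref{prob:POMDP}, thereby reducing it to a fully observable controlled Markov chain on the (countable) state space $\Mspace \times \N$ with action set $\mathcal{A}$, per-step cost $\hat c$, and discount factor $\gamma$. The Bellman equation~\eqref{eq:bellman_infinite} will then follow from standard infinite-horizon discounted dynamic programming applied to this reduced MDP.

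First, I would combine Lemmas~\ref{lemma:hat f}--\ref{lemma:hat c} to verify the information-state property. Lemma~\ref{lemma:tansition_prob_o} shows that the conditional law of $o_{t+1}$ given the full history $(x_{1:t}, y_{1:t}, a_{1:t})$ depends only on $(x_t, y_t, a_t)$; Lemma~\ref{lemma:hat f} then gives $(x_{t+1}, y_{t+1}) = \hat f(x_t, y_t, o_{t+1})$ deterministically. Composing these yields a transition kernel $P\bigl((x',y') \mid (x,y), a\bigr)$ that is Markovian in $(x_t,y_t)$ given $a_t$. Lemma~\ref{lemma:hat c} further shows that the conditional expectation of $c(m_t,\hat m_t, a_t)$ given the history collapses to $\hat c(x_t, y_t, a_t)$. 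By the tower property of conditional expectation, for any admissible strategy $g$,
\begin{equation}
J(g) = \mathbb{E}^{g}\left[\sum_{t=1}^{\infty} \gamma^{t-1} \hat c(x_t, y_t, a_t)\right],
\end{equation}
so the original cost admits an equivalent representation in terms of the reduced process.

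Next, I would invoke a standard sufficient-statistic argument (Striebel's principle of irrelevant information, or the POMDP reduction in Bertsekas--Shreve / Kumar--Varaiya) to conclude that restricting attention to strategies of the form $a_t = \pi(x_t, y_t)$ is without loss of optimality: any history-dependent strategy can be matched by a randomized information-state strategy achieving the same expected cost, and by the MDP structure of the reduced problem, a deterministic stationary minimizer exists. Invoking the standard Bellman optimality theorem for infinite-horizon discounted MDPs on the reduced model then yields~\eqref{eq:bellman_infinite}, with the expectation taken against the observation kernel~\eqref{eq:tansition_prob_o}.

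The main obstacle I anticipate is handling the countable infiniteness of the $y$-coordinate: unlike a finite-state MDP, uniqueness and existence of a bounded solution to~\eqref{eq:bellman_infinite} require that $\hat c$ is uniformly bounded so that the Bellman operator is a $\gamma$-contraction on $\ell^{\infty}(\Mspace \times \N)$. This follows because $c$ is defined on the compact domain $\Mspace^2 \times \mathcal{A}$ and $\hat c$ is a convex combination of values of $c$, hence inherits the bound $\|\hat c\|_{\infty} \leq \|c\|_{\infty}$. With the contraction in place, Banach's fixed-point theorem furnishes a unique bounded $V$ satisfying~\eqref{eq:bellman_infinite}, and a measurable selector of the minimizer on the right-hand side delivers the stationary optimal strategy in the information-state variables, which through~\eqref{eq:hat-f} and~\eqref{eq:o-x-y} lifts to an optimal strategy of the form~\eqref{eq:strategy}.
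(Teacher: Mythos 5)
Your proposal is correct and follows essentially the same route as the paper: establish that $(x_t,y_t)$ is an information state via Lemmas~\ref{lemma:hat f}--\ref{lemma:hat c}, reduce to a fully observable MDP with per-step cost $\hat c$, and invoke standard infinite-horizon discounted dynamic programming. Your additional remarks on boundedness of $\hat c$ and the $\gamma$-contraction on the countable state space $\Mspace\times\N$ are a welcome elaboration of a point the paper delegates to its citation of Bertsekas, but they do not constitute a different argument.
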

\begin{proof}
The proof follows from the fact that $(x_t,y_t)$ are sufficient statistic to identify the optimal solution of  Problem~\ref{prob:POMDP}. More precisely, given  observations $o_{1:t}$, $t \in \mathbb{N}$, state  $(x_t,y_t)$  is observable at any time $t$ and has Markovian dynamics according to  Lemma~\ref{lemma:hat f}.  In addition,   given  any  history  $(x_{1:t},y_{1:t},a_{1:t})$,  the conditional probability of  $o_{t+1}$  and   the conditional expectation of the per-step cost $c(m_t,\hat m,a_t)$  do not depend on the strategy $g$ according to Lemmas~\ref{lemma:tansition_prob_o} and~\ref{lemma:hat c}, respectively. Consequently,  the optimal solution of Problem~\ref{prob:POMDP} can be identified by dynamic programming decomposition, and  Bellman equation~\eqref{eq:bellman_infinite}  is obtained from well-known results  in Markov decision theory~\cite[Proposition 5.4.1,  Volume 1]{Bertsekas2012book}.
\end{proof}

Since the planning space in Theorem~\ref{proposition:bellman} is countably infinite,  the solution of the Bellman equation~\eqref{eq:bellman_infinite}  is intractable,  in general.  As a result,  we  propose an $\varepsilon$-optimal solution  based on a truncation technique whose performance is within an arbitrary neighbourhood (determined by $\varepsilon$)  of the optimal performance for Problem~\ref{prob:POMDP}.  

\begin{remark}
\emph{Under some practical constraints  such as limited energy resources  for data collection   or saturation of states,   the feasible set of the dynamic program~\eqref{eq:bellman_infinite} may reduce to  a finite set, yielding  a tractable optimization problem.   
}
\end{remark}

\subsection{An $\varepsilon$-optimal solution for Problem~\ref{prob:POMDP}}

 Denote by $c_{\text{max}}$ an upper bound on the per-step cost~\eqref{eq:per_step_cost_general}.  For any $k \in \mathbb{N}$, define the following Bellman equation at  any  $x,m^\ast \in \Mspace$ and $y \in  \N_k$:
 \begin{equation}\label{eq:Bellman_approximate}
\tilde V_k(x,y)=\min(\tilde V^0_k(x,y),\tilde V^1_k(x,y)),
\end{equation}
where 
 \begin{equation}
 \begin{cases}
\tilde V^0_k(x,y):=\sum_{m \in \Mspace}   c(m,h(\TM^{y}(m,x)),0)  \TM^{y}(m,x)\\
\qquad + \gamma (\ID{y <k} \tilde V_k(x,y+1) 
+ \ID{y=k} \tilde V_k(m^\ast,0)),\\
\tilde V^1_k(x,y):=\sum_{m \in \Mspace}   c(m,h(\TM^{y}(m,x)),1)  \TM^{y}(m,x)\\
+  (1-q) \gamma (\ID{y <k} \tilde V_k(x,y+1) + \ID{y=k} \tilde V_k(m^\ast,0))\\
\qquad + q \gamma (\sum_{m' \in \Mspace} \TM^{y+1}(m',x) \tilde V_k(m',0)).
\end{cases}
\end{equation}

 \begin{theorem}\label{thm:approximate-pomdp}
Suppose Assumption~\ref{assum:exchangeable} holds. Given any $\varepsilon \in \mathbb{R}_{>0}$,  let   $k(\varepsilon) \in \mathbb{N}$   be sufficiently large such that
\begin{equation}\label{eq:k-large}
 k(\varepsilon) \geq \log (\frac{(1-\gamma)\varepsilon}{2c_{\text{max}}}) / \log(\gamma).
\end{equation} 
Then,  using  $k=k(\varepsilon)$, any  solution  to  the Bellman equation~\eqref{eq:Bellman_approximate} is an $\varepsilon$-optimal  solution for Problem~\ref{prob:POMDP}, i.e.,
\begin{equation}
g^\ast_\varepsilon(x,y):=\begin{cases}
0, & \tilde V^0_k(x,y) \leq \tilde V^1_k(x,y),\\
1, &  \tilde V^0_k(x,y) > \tilde V^1_k(x,y).
\end{cases}
\end{equation}
 \end{theorem}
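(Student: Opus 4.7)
I view the truncated recursion~\eqref{eq:Bellman_approximate} as the Bellman equation of a perturbed MDP that agrees with the exact problem of Theorem~\ref{proposition:bellman} everywhere except that the transition from $(x,k)$ after a blank observation is redirected to the fixed reference state $(m^\ast,0)$ instead of to $(x,k+1)$. Both Bellman operators are $\gamma$-contractions in the sup norm and so admit unique bounded fixed points $V^\ast$ and $\tilde V_k$, each lying in $[0, c_{\text{max}}/(1-\gamma)]$ since $0\le \hat c\le c_{\text{max}}$. The plan is to bound the suboptimality $J(g^\ast_\varepsilon)-V^\ast(x_1,0)$ via a path-wise coupling of the two MDPs rather than by trying to compute $V^\ast$ directly.

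First I would construct the coupling: feed both systems with the same realizations of $\mathbf s_1$, $\mathbf w_t$ and $\eta_t$, and apply the same control law on either side. By Lemma~\ref{lemma:hat f} the sufficient statistic $(x_t,y_t)$ evolves identically in both systems until the first time $y_t$ would exceed $k$, and since $y$ grows by at most one per step starting from $y_1=0$, this hitting time is at least $k+1$ almost surely. The discounted tail of the cumulative cost accrued after the divergence is bounded by $\gamma^{k}c_{\text{max}}/(1-\gamma)$ in each system, so the two systems yield values that differ by at most $\gamma^{k}c_{\text{max}}/(1-\gamma)$ under any common policy.

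The $\varepsilon$-optimality then follows from the triangle decomposition
\begin{equation*}
J(g^\ast_\varepsilon) - V^\ast(x_1,0) = \bigl[J(g^\ast_\varepsilon) - \tilde V_k(x_1,0)\bigr] + \bigl[\tilde V_k(x_1,0) - V^\ast(x_1,0)\bigr].
\end{equation*}
The first bracket is the coupling gap for the truncated-optimal policy $g^\ast_\varepsilon$ applied in the two MDPs, hence at most $\gamma^{k}c_{\text{max}}/(1-\gamma)$. For the second, the optimality of $\tilde V_k$ in the truncated MDP gives $\tilde V_k(x_1,0)\le J^{\pi^\ast}_{\mathrm{trunc}}(x_1,0)$, and a second application of the coupling, now to the exact-optimal policy $\pi^\ast$, yields $|J^{\pi^\ast}_{\mathrm{trunc}}(x_1,0)-V^\ast(x_1,0)|\le\gamma^{k}c_{\text{max}}/(1-\gamma)$. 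Summing the two estimates produces $J(g^\ast_\varepsilon)-V^\ast(x_1,0)\le 2\gamma^{k}c_{\text{max}}/(1-\gamma)$, which is at most $\varepsilon$ precisely under~\eqref{eq:k-large}.

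The main obstacle is making the coupling rigorous: one must exhibit a joint probability space carrying the noise sequences $\{\mathbf s_1,\mathbf w_t,\eta_t\}$, verify inductively in $t$ that a common policy produces identical $(x_t,y_t,a_t)$ in both MDPs up to the hitting time, and then bound the post-divergence continuations uniformly using the $c_{\text{max}}/(1-\gamma)$ envelope on both value functions. A minor subtlety is that $g^\ast_\varepsilon$ is originally defined only for $y\le k$, so it needs an extension to $y>k$ (for instance $g^\ast_\varepsilon(x,y):=g^\ast_\varepsilon(x,k)$) in order for $J(g^\ast_\varepsilon)$ to be well defined in the exact MDP; any such extension is inconsequential because it is consulted only after the coupling has already diverged.
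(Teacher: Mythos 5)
Your proposal is correct and follows essentially the same route as the paper: both construct the truncated (``virtual'') MDP that redirects $(x,k)$ to $(m^\ast,0)$ upon a blank observation, observe that the original and truncated systems coincide for the first $k$ steps because $y$ grows by at most one per step from $y_1=0$, and bound each discounted tail by $\gamma^{k}c_{\text{max}}/(1-\gamma)$ to arrive at the $2\gamma^{k}c_{\text{max}}/(1-\gamma)\le\varepsilon$ condition. Your explicit triangle decomposition of $J(g^\ast_\varepsilon)-V^\ast$ into a policy-evaluation gap plus an optimal-value gap, together with the remark on extending $g^\ast_\varepsilon$ to $y>k$, is slightly more careful than the paper's argument, which only bounds the difference between the two optimal values.
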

 \begin{proof}
 The proof is presented in Appendix~\ref{sec:proof_thm:approximate-pomdp}. 
 \end{proof}
To  numerically compute the solution of the Bellman equation~\eqref{eq:Bellman_approximate}, one can use  value iteration,  policy iteration,  or any other  existing approximate  method~\cite{Bertsekas2012book,toral2014stochastic}.  Note that the space of  Bellman equation~\eqref{eq:Bellman_approximate}  (i.e., $\Mspace \times \N_k$) grows polynomially  with  the number of nodes $n$  and linearly with  the approximation index $k$.  For the case when~\eqref{eq:k-large} is equality,   $k(\varepsilon)$ is proportional to $-\log (\text{constant}\times  \varepsilon)$. Note that  $\gamma \in (0,1)$, which means its log is negative. Hence, $-\log (\text{constant} \times \varepsilon)=\log (\text{constant} \times {\varepsilon}^{-1})$. Therefore,  the following result holds.
 \begin{corollary}
 The computational complexity of the proposed solution in Theorem~\ref{thm:approximate-pomdp} is linear with respect to the approximation index $k(\varepsilon)$, logarithmic with respect to the inverse of the size of the desired neighborhood $\varepsilon$, and polynomial with respect to the number of nodes $n$.
 \end{corollary}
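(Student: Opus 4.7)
The plan is a direct counting argument, since Theorem~\ref{thm:approximate-pomdp} has already done the analytical work of bounding the approximation error; the corollary only needs to estimate the size of the planning space and of the approximation index $k(\varepsilon)$.

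First, I would observe that the Bellman equation~\eqref{eq:Bellman_approximate} is defined on the finite domain $\Mspace \times \mathbb{N}_k$, so the computational cost of solving it (by value iteration, policy iteration, or any standard DP solver) is proportional to $|\Mspace \times \mathbb{N}_k| = |\Mspace|\cdot(k+1)$, multiplied by the per-state backup cost. A stars-and-bars count gives
\[
|\Mspace| = \binom{n+|\mathcal{S}|-1}{|\mathcal{S}|-1} = O\!\left(n^{|\mathcal{S}|-1}\right),
\]
which is polynomial in $n$ for the fixed state space $\mathcal{S}$. The factor $(k+1)$ supplies the linear scaling in the approximation index $k(\varepsilon)$, and each backup in~\eqref{eq:Bellman_approximate} requires at most a summation over $\Mspace$ together with constant-time lookups of $\TM^{y}$, $\TM^{y+1}$, and $\hat c$, which preserves polynomial dependence on $n$. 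This accounts for the two claims of linearity in $k(\varepsilon)$ and polynomiality in $n$.

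Second, I would make the logarithmic dependence on $\varepsilon^{-1}$ explicit by rewriting the bound~\eqref{eq:k-large} in Theorem~\ref{thm:approximate-pomdp}. Since $\gamma \in (0,1)$ gives $\log \gamma < 0$, one may take
\[
k(\varepsilon) = \left\lceil \frac{\log\!\bigl(2c_{\text{max}}/((1-\gamma)\varepsilon)\bigr)}{\log(1/\gamma)} \right\rceil,
\]
so that $k(\varepsilon) = O\!\left(\log(\varepsilon^{-1})\right)$ once $\gamma$, $c_{\text{max}}$, and $q$ are regarded as constants. Combining the two estimates yields that the total cost of solving~\eqref{eq:Bellman_approximate} is $O\!\left(n^{|\mathcal{S}|-1} \log(\varepsilon^{-1})\right)$ (ignoring per-backup polynomial factors in $n$), which is exactly the three scaling claims in the corollary. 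The only potential subtlety—and the sole place where one must be a bit careful—is ensuring that the per-state backup cost depends only polynomially on $n$ and contains no hidden factor growing with $k$; this follows because each transition $\TM^{y}(\cdot,x)$ can be precomputed once for every required power $y \in \mathbb{N}_k$, so the precomputation is itself polynomial in $n$ and linear in $k(\varepsilon)$, and does not break the stated complexity bounds.
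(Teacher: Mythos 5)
Your proposal is correct and follows essentially the same route as the paper: the paper likewise argues that the planning space $\Mspace \times \N_k$ grows polynomially in $n$ and linearly in $k$, and that solving~\eqref{eq:k-large} for $k(\varepsilon)$ with $\log\gamma<0$ yields the $\log(\varepsilon^{-1})$ dependence. You merely make explicit two points the paper leaves implicit (the stars-and-bars count $|\Mspace|=\binom{n+|\mathcal{S}|-1}{|\mathcal{S}|-1}$, which the paper delegates to a citation, and the precomputation of the powers $\TM^{y}$), which is a harmless and slightly more self-contained presentation of the same argument.
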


\begin{corollary}\label{cor:n=1}
For the special case of single node, i.e., $n=1$,   there is no loss of optimality in replacing space $\Mspace$ by space $\mathcal{S}$ in Theorems~\ref{proposition:bellman} and~\ref{thm:approximate-pomdp}.
\end{corollary}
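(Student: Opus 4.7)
The plan is to exhibit an explicit bijection between $\Mspace$ (for $n=1$) and $\mathcal{S}$ under which all the ingredients of the Bellman equations in Theorems~\ref{proposition:bellman} and~\ref{thm:approximate-pomdp} are preserved. When $n=1$, the definition~\eqref{eq:mf-def} forces $m_t(s) \in \{0,1\}$ with $\sum_{s \in \mathcal{S}} m_t(s) = 1$, so every $m \in \Mspace$ is a Dirac mass $\delta_s$ at some unique $s \in \mathcal{S}$. This yields the bijection $\iota : \mathcal{S} \to \Mspace$, $\iota(s) = \delta_s$, with inverse $\iota^{-1}(m) = s$ where $s$ is the unique state with $m(s) = 1$.

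The first step is to verify that $\iota$ intertwines the state-level and distribution-level transition kernels. For $n=1$, Proposition~\ref{thm:mean_field_evolution_exchangeable} together with~\eqref{eq:tm} yields $\TM(\iota(s'), \iota(s)) = T(s', s, \iota(s))$, so powers $\TM^y$ correspond to $y$-step state kernels $T^y$ under $\iota$. The second step is to translate the per-step cost in~\eqref{eq:hat-c}: defining $\check c(s, y, a) := \hat c(\iota(s), y, a)$ gives a per-step cost on $\mathcal{S} \times \N \times \mathcal{A}$ that agrees with $\hat c$ on the image of $\iota$, and similarly the estimator $h(\TM^y(\cdot, \iota(s)))$ can be rewritten entirely in terms of $s$ and $y$. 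The third step is to translate the observation kernel~\eqref{eq:tansition_prob_o} and the sufficient-statistic dynamics in Lemma~\ref{lemma:hat f}; both depend on $x$ only through $\TM$ and the comparison $o_{t+1} \neq \B$, which again transport through $\iota$.

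The fourth step is to rewrite the Bellman equations on the smaller space $\mathcal{S} \times \N$: defining $\check V(s, y) := V(\iota(s), y)$ for Theorem~\ref{proposition:bellman} and $\check V_k(s, y) := \tilde V_k(\iota(s), y)$ for Theorem~\ref{thm:approximate-pomdp}, substitution shows that $\check V$ and $\check V_k$ satisfy the same fixed-point equations with $\Mspace$ replaced by $\mathcal{S}$. By uniqueness of the solution to the contraction mapping (the discount factor $\gamma \in (0,1)$ ensures standard contraction on the bounded value functions), the minimizing actions coincide, so optimal strategies are identified on $\mathcal{S} \times \N$ without any loss. The truncation bound~\eqref{eq:k-large} is independent of the planning space and therefore inherits unchanged.

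The only mildly delicate point, which I would handle carefully, is the transport of the reset term $\tilde V_k(m^\ast, 0)$ in~\eqref{eq:Bellman_approximate}: one simply picks $s^\ast := \iota^{-1}(m^\ast)$ and rewrites the reset as $\check V_k(s^\ast, 0)$. Since all other quantities transport componentwise through $\iota$ and the constants $c_{\text{max}}$, $q$, $\gamma$ are unaffected by the state representation, the reduction is complete and the $\varepsilon$-optimality guarantee of Theorem~\ref{thm:approximate-pomdp} carries over verbatim to the reduced space $\mathcal{S} \times \N_k$.
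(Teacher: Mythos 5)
Your proposal is correct and follows essentially the same route as the paper, which simply notes that for $n=1$ one has $m_t=\delta_{s_t}$, so $\Mspace$ and $\mathcal{S}$ carry equivalent information; you have merely spelled out the bijection $\iota(s)=\delta_s$ and verified explicitly that it transports the kernels, costs, and Bellman equations. No gap — your version is a more detailed elaboration of the paper's one-line argument.
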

 \begin{proof}
The proof follows on noting that when $n=1$,  spaces $\Mspace$ and $\mathcal{S}$ have equivalent information, i.e.,  $m_t=\delta_{s_t}$ at any time $t \in \mathbb{N}$.
 \end{proof}

\begin{remark} 
\emph{
It is to be noted that the application domain of the present work is different from  applications such as sensor selection, where the objective is to dynamically choose a subset of sensors in order to monitor a time-varying phenomenon~\cite{wu2008optimal,fuemmeler2011sleep}. For example, in sleep sensor scheduling control~\cite{fuemmeler2011sleep} the dynamics of the target (phenomenon) is decoupled from the scheduling (sampling) strategy, the value of  data at each time instant is binary (i.e., it is  zero if data is observed and  it is one otherwise), and the planning space consists of the belief state of the target  as well as the residual sleep times of sensors~\cite[Theorem 3.1]{fuemmeler2011sleep}.    In contrast, the phenomenon considered in this paper is  an  estimate of data, generated by the estimator,  that is influenced by the sampling strategy, and  leads to  a dual effect~\cite{bar1974dual}. The value of data  depends on various parameters such as the cost and estimator functions and is not necessarily a binary variable.  Furthermore,  the proposed dynamic program is based on a  planning space, defined as the last credible data and the elapsed time since then (which is not a belief space).  In addition,  the action set here does not depend on the number of nodes  and the state space and  the dynamics of the phenomenon (data), and the solution methodology  is  amenable to the incompleteness of the model structure.}
\end{remark}

\subsection{An $\varepsilon$-optimal solution for Problem~\ref{prob:RL}}
Two different approaches are considered here  to  find a near-optimal  strategy for Problem~\ref{prob:RL}. The first one is an indirect  (model-based)  approach which involves two steps: supervised learning and planning.  Given a large number of training samples, one can utilize supervised-learning (parametrization)  techniques such as  linear  regression and  logistic regression   to  identify the model, and  then  find the planning solution of the Bellman equation~\eqref{eq:Bellman_approximate}  by  using  methods such as value iteration and policy iteration~\cite{Bertsekas2012book}. In general, the total  number of unknown  parameters that should be learned  to solve  equation~\eqref{eq:Bellman_approximate}  using this  approach is equal to $ |\Mspace|^2+ 2(k+1) |\Mspace|+ 1$, with $|\Mspace|^2$ scalars  for transition probability matrix $\TM$,  $|\Mspace| |\N_k||\mathcal{A}|$ scalars for the per-step cost $c$, and  $1$ scalar for the credibility of data~$q$.    In practice, the first approach  is feasible  when the number of unknown parameters is relatively small.
 For example,  the deep Chapman-Kolmogorov equation in~Theorem~\ref{thm:mean_field_iid} can be parametrized by  a small number of variables  according to~Theorem~\ref{cor:iid},  for the case when the dynamics of nodes are decoupled  and the random variables are  i.i.d., which is a non-trivial (yet efficient)  parametrization. 
%

\begin{proposition}\label{thm:machine}
Let Assumption~\ref{assum:iid} hold and  the per-step cost function as well as the estimator function be given.     When the dynamics of the states are decoupled and $n$ is large,  it is more efficient to first learn the model and then solve the Bellman equation~\eqref{eq:Bellman_approximate}  to obtain an $\varepsilon$-optimal solution for Problem~\ref{prob:RL}.
\end{proposition}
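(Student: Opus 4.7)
The plan is to establish the claim by a parameter-counting (sample-complexity) comparison between the two natural routes to an $\varepsilon$-optimal strategy for Problem~\ref{prob:RL}: the indirect/model-based route (learn the model, then solve~\eqref{eq:Bellman_approximate}) versus any direct/model-free route (learn the value function or $Q$-function on the planning space $\Mspace \times \N_k$ from samples). Because the per-step cost $c$ and the estimator $h$ are given, neither route needs to learn these, so the contest reduces to the number of statistical unknowns that still need to be estimated from data.

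First I would invoke Theorem~\ref{cor:iid}: under Assumption~\ref{assum:iid} and decoupled dynamics $s^i_{t+1}=f(s^i_t,w^i_t)$, the full transition $\TM$ on $\Mspace$ is uniquely determined by the local transition $T(s',s)$ on $\mathcal{S}$ through~\eqref{eq:phi} and~\eqref{eq:tilde-phi}. Hence the indirect route needs to estimate only the entries of $T(\cdot,\cdot)$, namely $|\mathcal{S}|(|\mathcal{S}|-1)$ free parameters (rows summing to one), together with the scalar credibility $q$, for a total of $|\mathcal{S}|(|\mathcal{S}|-1)+1$ unknowns. This count is \emph{independent of} $n$. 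Standard concentration arguments (Hoeffding for the $|\mathcal{S}|^2$ local transition entries and for $q$) then give, for any prescribed estimation error, a sample complexity that is a function of $|\mathcal{S}|$ only. Once $T$ and $q$ are estimated, $\TM$ is reconstructed by~\eqref{eq:phi}--\eqref{eq:tilde-phi} and the Bellman recursion~\eqref{eq:Bellman_approximate} is solved by value iteration, yielding an $\varepsilon$-optimal strategy by Theorem~\ref{thm:approximate-pomdp}.

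Second, I would count the unknowns for any direct route. A model-free learner operates on the sufficient statistic $(x,y)\in\Mspace\times\N_k$ and must learn either $V$ or the two action-value branches $\tilde V^0_k,\tilde V^1_k$ on that space. The number of state--action pairs therefore scales with $|\Mspace|\cdot(k+1)\cdot|\mathcal{A}|$, and $|\Mspace|=\binom{n+|\mathcal{S}|-1}{|\mathcal{S}|-1}$ grows polynomially in $n$ with degree $|\mathcal{S}|-1$. A matching lower bound on sample complexity (each $(x,y,a)$-cell must be visited enough times for its $Q$-value to concentrate) shows that the model-free route requires a number of samples that grows polynomially in $n$. Comparing the two bounds, the indirect route's $n$-independent sample budget dominates the model-free route's $n$-polynomial budget for all sufficiently large $n$, which is the content of the proposition.

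The main obstacle, in my view, is not the upper bound on the indirect route (which is essentially arithmetic once Theorem~\ref{cor:iid} is in hand) but pinning down a fair and meaningful notion of ``more efficient.'' I would make the statement precise by declaring efficiency in terms of the number of real-valued parameters that must be estimated to guarantee the $\varepsilon$-optimality in~\eqref{eq:epsilon-optimal}, and then show that this quantity is $O(|\mathcal{S}|^2)$ for the indirect route and $\Omega(\mathrm{poly}(n))$ for any direct route. A secondary subtlety is that the reconstruction map from $T$ to $\TM$ in~\eqref{eq:phi}--\eqref{eq:tilde-phi} is nonlinear, so I would need a short Lipschitz/continuity argument (via the convolution structure of $\bar\phi$ and the contraction of the discounted Bellman operator) to convert an $\varepsilon'$-accurate estimate of $T$ into an $\varepsilon$-accurate strategy for~\eqref{eq:Bellman_approximate}; this preserves the $n$-independence and completes the comparison.
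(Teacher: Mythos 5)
Your argument is essentially the paper's own proof: the paper justifies the proposition purely by the parameter count, noting that under Theorem~\ref{cor:iid} the model-based route needs only $|\mathcal{S}|^2+1$ unknowns (the local transition matrix plus $q$), independent of $n$, whereas the model-free route must learn $|\Mspace|\,|\N_k|\,|\mathcal{A}|$ Q-values and $|\Mspace|$ grows polynomially in $n$. Your additional machinery (concentration bounds, the Lipschitz argument for the map from $T$ to $\TM$, and the matching lower bound for the direct route) goes beyond what the paper supplies but does not change the route — it is a more rigorous rendering of the same comparison.
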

\begin{proof}
The proof follows on noting that the total  number of unknown  parameters  in~\eqref{eq:Bellman_approximate}  is $|\mathcal{S}|^2+1$, which   is  independent of the number of nodes $n$ (i.e., $|\mathcal{S}|^2$ parameters  correspond to the transition probability matrix $\TM$  from  Theorem~\ref{cor:iid} and  one parameter corresponds to the credibility probability~$q$).
\end{proof}

The second approach is a  direct (model-free) method that finds an $\varepsilon$-optimal solution of the Bellman equation~\eqref{eq:Bellman_approximate} without learning the model. In this approach,  any approximate dynamic programming method such as TD($\lambda$) and Q-learning can be employed to find a sufficiently close approximation to the  Bellman equation~\eqref{eq:Bellman_approximate}.  To this end, it is important to note that   the state space of Bellman equation \eqref{eq:Bellman_approximate}  is finite and its states are observable by virtue of a model-independent function $\tilde f$ given by~\eqref{eq:tilde-f}.   To illustrate the approach,  we use the Q-learning algorithm in this paper as a model-free reinforcement learning method. We present Algorithm~\ref{alg.RL-Qlearning} which requires only $|\Mspace| |\N_k||\mathcal{A}|$ scalars for the Q-functions, where $|\N_k| |\mathcal{A}|$ is, in fact, equal to $2(k+1)$.

\alglanguage{pseudocode}
\begin{algorithm}[t!]
\small
\caption{Proposed Q-Learning Procedure}
\label{alg.RL-Qlearning}
\begin{algorithmic}[1]
\State  Given any $\varepsilon \in \mathbb{R}_{>0}$,  let   $k(\varepsilon) \in \mathbb{N}$  satisfy inequality~\eqref{eq:k-large}.
\State Let $x_1=m_1$, $y_1=0$, $Q_1(x,y,a)=0$ and $\alpha_1(x,y,a)=1, \forall x \in \Mspace, y \in \N_k, a \in \mathcal{A}$.

\State At  iteration $\tau \in \mathbb{N},$ given any state $(x,y) \in \Mspace \times \N_k$ and any action $a \in \mathcal{A}$,  update   the  corresponding Q-function and  learning rate as follows:
\begin{align}
\begin{cases}
Q_{\tau+1}(x,y, a)=(1-\alpha_\tau(x,y, a)) Q_\tau (x,y, a)\\
\quad + \alpha_\tau (x,y, a) (c'+\gamma \min_{a' \in \mathcal{A}} Q_\tau(x',y',a')),\\
\alpha_{\tau+1}(x,y,a)=\lambda({\tau},\alpha_\tau(x,y,a)),
\end{cases}
\end{align}
where $c'$ is the immediate cost, $(x',y')=\tilde f(x,y,o)$ is the next state with  observation $o$, and $\lambda$  determines proper learning rates  $\alpha_\tau \in [0,1], \tau \in \mathbb{N},$  such that
$\sum_{\tau=1}^\infty \alpha_\tau(x,y,a) = \infty$ and  $\sum_{\tau=1}^\infty (\alpha_\tau(x,y,a))^2 <\infty.$

  \State \noindent Let $\tau= \tau+1$, and go to step 3 until the termination condition is satisfied.
\Statex
\end{algorithmic}
  \vspace{-0.2cm}%
\end{algorithm}

\begin{theorem}\label{thm:RL}
 Let Assumption~\ref{assum:exchangeable} hold, and  suppose that every pair of state $(x,y) \in \Mspace \times \N_k$ and action $a \in \mathcal{A}$ is visited infinitely often  in Algorithm \ref{alg.RL-Qlearning}. Then,  the following results hold:
\begin{itemize}
\item[ (a)]  For any $(x,y,a) \in \Mspace \times \N_k \times \mathcal{A}$, the Q-function $Q(x,y,a)$  converges  to  $Q^\ast(x,y,a)$ with probability one. 
\item[ (b)] Let $g^\ast_\varepsilon \in \argmin_{a \in \mathcal{A}}Q^\ast(x,y,a)$ be a greedy strategy; then,   $g^\ast_\varepsilon$ is an $\varepsilon$-optimal strategy  for Problem~\ref{prob:RL}.
\end{itemize}
\end{theorem}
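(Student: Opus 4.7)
The plan is to decompose the theorem into its two parts and exploit the fact that the approximate Bellman equation~\eqref{eq:Bellman_approximate} of Theorem~\ref{thm:approximate-pomdp} defines a genuine finite Markov decision process to which standard stochastic-approximation results apply.

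For part (a), I would first argue that the truncated problem underlying Algorithm~\ref{alg.RL-Qlearning} is a finite MDP with state space $\Mspace\times\N_k$ (finite since $|\Mspace|$ is polynomial in $n$ and $k=k(\varepsilon)$ is finite), action set $\mathcal{A}=\{0,1\}$, one-step cost $\hat c(x,y,a)$ bounded by $c_{\text{max}}$, and transition kernel read off from Lemma~\ref{lemma:tansition_prob_o} composed with the (truncated) update rule $\tilde f$ that resets to $m^\ast$ when $y=k$ and otherwise coincides with $\hat f$ of Lemma~\ref{lemma:hat f}. Next, I would verify the hypotheses of the classical Q-learning convergence theorem of Watkins--Dayan and Tsitsiklis: (i) the state-action space is finite; (ii) the per-step cost is bounded; (iii) by assumption every $(x,y,a)$ is visited infinitely often; and (iv) the learning rates $\alpha_\tau(x,y,a)\in[0,1]$ satisfy the Robbins--Monro conditions $\sum_\tau \alpha_\tau=\infty$, $\sum_\tau \alpha_\tau^2<\infty$, as specified in Algorithm~\ref{alg.RL-Qlearning}. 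The conclusion $Q_\tau \to Q^\ast$ almost surely, with $Q^\ast$ the unique fixed point of the Q-form of~\eqref{eq:Bellman_approximate}, then follows directly.

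For part (b), I would identify $Q^\ast(x,y,a)$ with the two branches $\tilde V^0_k$, $\tilde V^1_k$ appearing in~\eqref{eq:Bellman_approximate}: by construction $\min_a Q^\ast(x,y,a)=\tilde V_k(x,y)$ and the greedy selector $g^\ast_\varepsilon\in\argmin_a Q^\ast(x,y,\cdot)$ coincides with the selector exhibited in Theorem~\ref{thm:approximate-pomdp}. Since $k=k(\varepsilon)$ satisfies~\eqref{eq:k-large}, Theorem~\ref{thm:approximate-pomdp} guarantees that any minimizing strategy of~\eqref{eq:Bellman_approximate} is $\varepsilon$-optimal for the original POMDP (Problem~\ref{prob:POMDP}), hence for the learning problem (Problem~\ref{prob:RL}) as well, since the two share the same objective~\eqref{eq:J} and differ only in the availability of model knowledge.

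The main obstacle I anticipate is a careful verification at the interface between Algorithm~\ref{alg.RL-Qlearning} and the truncated MDP: one must check that the sampled transitions $(x',y')=\tilde f(x,y,o)$ together with the realized immediate cost $c'$ form unbiased one-step simulators of the truncated kernel and cost $\hat c$ --- in particular that the reset event $y=k\mapsto(m^\ast,0)$ is handled as a deterministic transition in the simulator (or equivalently, that the agent observes enough of $o$ to recover $(x',y')$) so that the Robbins--Monro drift points to $Q^\ast$ rather than to the Q-function of some different kernel. Once this bookkeeping is made explicit, parts (a) and (b) are essentially assembled from Theorem~\ref{thm:approximate-pomdp} and the standard tabular Q-learning convergence theorem, and no further technical work is needed.
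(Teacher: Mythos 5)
Your proposal is correct and follows essentially the same route as the paper: reduce to the finite virtual MDP of Theorem~\ref{thm:approximate-pomdp}, invoke the standard tabular Q-learning convergence result under the infinitely-often-visitation and Robbins--Monro step-size conditions, and then transfer $\varepsilon$-optimality of the greedy strategy from Theorem~\ref{thm:approximate-pomdp}. The paper merely unpacks the citation you defer to --- it explicitly defines the zero-mean, bounded-variance noise term (which is exactly the unbiased-simulator check you flag as the main obstacle) and verifies the $\gamma$-contraction of the Q-operator before appealing to Tsitsiklis's asynchronous stochastic-approximation theorem.
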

\begin{proof}
The proof is presented in Appendix~\ref{sec:proof_thm:RL}.
\end{proof}

\section{Networks with a Large Number of Nodes: Asymptotic Analysis} \label{sec:asymptotic}
In this section, we show  that estimating data tends to  be more efficient than collecting  it when the number of nodes  is sufficiently large. To this end, the following  assumption is made on the model.
\begin{assumption}\label{assum:Lischitz}
There exist positive scalars $K_T$ and $K_c$ such that for any $s', s \in \mathcal{S}$, $a \in \mathcal{A}$, and $m, m' \in \mathcal{P}(\mathcal{S})$,
\begin{itemize}
\item[1.] $|T(s',s,m) - T(s',s,m') | \leq K_T\Ninf{m - m'}, $
\item[2.] $c(m, \hat m,a) \leq K_c \Ninf{m - \hat m}.$
\end{itemize}
\end{assumption}
It is to be noted that Assumption~\ref{assum:Lischitz} is mild because  any polynomial function in $m$ is  Lipschitz   with respect to  $m$ due to the fact  that $m$ is confined to a bounded interval.    Let  $\bar T: \mathcal{P}(\mathcal{S}) \rightarrow \mathcal{P}(\mathcal{S})$ be defined as follows:
\begin{equation}\label{eq:h_large_T}
\bar T(p):= \sum_{s \in \mathcal{S}}  p(s) T(\boldsymbol \cdot,s,p), \quad p \in \mathcal{P}(\mathcal{S}).
\end{equation}
 Let  also the infinite-population estimator $h$ be defined as:
\begin{equation}\label{eq:h_large_n}
\hat m_{t}= \underbrace{\bar T \circ  \ldots \circ \bar T}_{t-1}(\hat m_1), \quad t \in  \mathbb{N}\backslash \{1\},
\end{equation}
where $\circ$ is  the composition  operator.
\begin{remark}\label{remark:linear_inf}
\emph{When the dynamics are decoupled and $n=\infty$, the nonlinear dynamics~\eqref{eq:h_large_T} reduces to a linear equation, i.e., $\bar T(p)= T(\boldsymbol \cdot, \boldsymbol \cdot) p$.}
\end{remark}
\begin{lemma}\label{lemma:estimator_lipschitz}
Let Assumption~\ref{assum:Lischitz} hold.  There exists a positive scalar $K_p$ such that
\begin{equation}
\Ninf{ \bar T( p)- \bar T( \hat p)} \leq K_p \Ninf{p-\hat p}, \quad p,\hat p \in \mathcal{P}(\mathcal{S}).
\end{equation}
\begin{proof}
The proof follows from equation~\eqref{eq:h_large_n}, Assumption~\ref{assum:Lischitz}, and the fact that the Lipschitz property is preserved  under summation and multiplication.
\end{proof}
\end{lemma}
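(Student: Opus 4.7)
The plan is to bound $|\bar T(p)(s') - \bar T(\hat p)(s')|$ uniformly in $s' \in \mathcal{S}$ by a constant multiple of $\Ninf{p - \hat p}$, using the definition \eqref{eq:h_large_T} together with the Lipschitz property of $T(s',s,\cdot)$ supplied by Assumption~\ref{assum:Lischitz}. The standard trick is an add-subtract step that splits the error into a ``linear-in-$p$'' piece and a ``Lipschitz-in-$T$'' piece, each of which is handled separately.

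More concretely, first I would fix $s' \in \mathcal{S}$ and write
\begin{equation*}
\bar T(p)(s') - \bar T(\hat p)(s') = \sum_{s \in \mathcal{S}} \bigl[p(s) T(s',s,p) - \hat p(s) T(s',s,\hat p)\bigr].
\end{equation*}
Adding and subtracting $\hat p(s) T(s',s,p)$ inside the sum decomposes the right-hand side into $\sum_{s} (p(s)-\hat p(s)) T(s',s,p) + \sum_{s} \hat p(s)\bigl(T(s',s,p)-T(s',s,\hat p)\bigr)$. The first sum is bounded by $|\mathcal{S}|\,\Ninf{p-\hat p}$ since $T(s',s,p)\in[0,1]$, and the second sum is bounded by $K_T\,\Ninf{p-\hat p}$ via part~1 of Assumption~\ref{assum:Lischitz} together with $\sum_s \hat p(s)=1$.

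Combining these bounds and taking the maximum over $s'\in\mathcal{S}$ yields
\begin{equation*}
\Ninf{\bar T(p) - \bar T(\hat p)} \leq (|\mathcal{S}| + K_T)\,\Ninf{p - \hat p},
\end{equation*}
so it suffices to set $K_p := |\mathcal{S}| + K_T$, proving the claim.

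There is no real obstacle here: $\bar T$ is a sum of products of Lipschitz (in fact, bounded-Lipschitz) functions, and Lipschitz continuity is preserved under finite sums and under products of bounded Lipschitz factors, which is exactly the structural observation the paper invokes. The only subtlety is making sure the constant depends only on $K_T$ and $|\mathcal{S}|$ and not on $p$ or $\hat p$, which is guaranteed because $T\leq 1$ and $\hat p$ is a probability measure, so both bounding steps above are uniform.
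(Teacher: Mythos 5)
Your proof is correct and is essentially the argument the paper gestures at: the add--subtract decomposition makes precise the claim that the Lipschitz property is preserved under summation and under products of bounded Lipschitz factors, using only part~1 of Assumption~\ref{assum:Lischitz} together with $T\in[0,1]$ and $\sum_s \hat p(s)=1$. The explicit constant $K_p=|\mathcal{S}|+K_T$ is a welcome addition that the paper does not supply.
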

\begin{lemma}\label{lemma:evolution_mean_field}
Let Assumption~\ref{assum:iid} hold. Given any $m_t \in \Mspace$ and  $\hat m_t \in \mathcal{P}(\mathcal{S})$,  $t \in \mathbb{N}$,  the following inequality is satisfied:
\begin{equation}
\Exp{\Ninf{m_{t+1} - \hat m_{t+1}} |m_t, \hat{m}_t} \leq K_p \Ninf{m_t -\hat m_t} + \mathcal{O}(\frac{1}{\sqrt n}).
\end{equation}
\end{lemma}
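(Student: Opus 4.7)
The plan is to insert the deterministic infinite-population update $\bar T(m_t)$ as an intermediate term and apply the triangle inequality:
\begin{equation}
\Ninf{m_{t+1} - \hat m_{t+1}} \leq \Ninf{m_{t+1} - \bar T(m_t)} + \Ninf{\bar T(m_t) - \bar T(\hat m_t)},
\end{equation}
where I have used the estimator definition $\hat m_{t+1} = \bar T(\hat m_t)$ from~\eqref{eq:h_large_n}. The first term captures the finite-sample fluctuation of the empirical measure around its one-step conditional mean, and the second captures the Lipschitz propagation of the current estimation error.

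The second term is immediate from Lemma~\ref{lemma:estimator_lipschitz}: since $\hat m_t$ is a fixed (given) element of $\mathcal{P}(\mathcal{S})$ and the bound $\Ninf{\bar T(m_t) - \bar T(\hat m_t)} \leq K_p \Ninf{m_t - \hat m_t}$ holds pathwise, its conditional expectation given $(m_t,\hat m_t)$ equals itself, giving the first summand $K_p\Ninf{m_t-\hat m_t}$ in the claim.

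For the first term, I would first verify that under Assumption~\ref{assum:iid} the vector $\bar T(m_t)$ is exactly the one-step conditional expectation of $m_{t+1}$. Writing $m_{t+1}(s')=\frac{1}{n}\sum_{i=1}^n \ID{s^i_{t+1}=s'}$ and conditioning on $m_t$ (and the current vector of states), the indicators $\ID{s^i_{t+1}=s'}$ are independent Bernoulli variables with means $T(s',s^i_t,m_t)$; grouping indices by state gives $\Exp{m_{t+1}(s')\mid m_t} = \sum_{s\in\mathcal{S}} m_t(s)T(s',s,m_t) = \bar T(m_t)(s')$. Then, because these indicators are conditionally independent, the conditional variance of $m_{t+1}(s')$ is at most $\frac{1}{4n}$, and Jensen's inequality yields $\Exp{|m_{t+1}(s')-\bar T(m_t)(s')|\mid m_t} \leq \frac{1}{2\sqrt n}$ for every $s'\in\mathcal{S}$. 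Summing (or taking the maximum) over the finitely many states in $\mathcal{S}$ bounds the infinity norm by $\frac{|\mathcal{S}|}{2\sqrt n}=\mathcal{O}(1/\sqrt n)$, uniformly in $m_t$.

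Combining both bounds and using tower property to absorb the finite-sample term into the conditional expectation on the left-hand side completes the argument. The only subtle point, and what I would treat as the main obstacle, is verifying conditional independence of the one-step Bernoulli indicators: one must condition on the full state vector $\mathbf{s}_t$ (not merely on $m_t$) to invoke independence of the $w^i_t$, and then marginalize; the exchangeability in Assumption~\ref{assum:exchangeable} guarantees that the resulting bound depends only on $m_t$, so the final inequality is indeed expressible as a statement conditional on $(m_t,\hat m_t)$.
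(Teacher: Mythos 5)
Your proof is correct and follows essentially the same route as the paper: the paper's own proof is a one-line appeal to the triangle inequality, Lemma~\ref{lemma:estimator_lipschitz}, and a cited concentration result (Lemma~2 of \cite{JalalCDC2017}) that plays exactly the role of your $\mathcal{O}(1/\sqrt{n})$ fluctuation bound. The only difference is that you supply the details of that concentration step (conditioning on the full state vector, conditional independence of the Bernoulli indicators, variance bound $1/(4n)$, Jensen) explicitly rather than by citation, and your treatment of the conditioning subtlety is sound.
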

\begin{proof}
The proof follows  directly from the triangle inequality, Lemma~\ref{lemma:estimator_lipschitz} and  \cite[Lemma~2]{JalalCDC2017}. 
\end{proof}

\begin{assumption}\label{assump:gamma_k}
Assume that $\gamma K_p  <1$. Note that this inequality always holds when the dynamics of states in~\eqref{eq:model-dynamics} are decoupled  because in this case $K_p=1$ satisfies Lemma~\ref{lemma:estimator_lipschitz}.
\end{assumption}

\begin{theorem}\label{thm:asymptotic}
Let   Assumptions~\ref{assum:iid},~\ref{assum:Lischitz} and~\ref{assump:gamma_k} hold. The total expected discounted  cost associated with using the estimator~\eqref{eq:h_large_n} is bounded at all times and converges to zero at the rate $1 /\sqrt n$ as follows:
\begin{equation}\label{eq:estimation_threshold}
\Exp{\sum_{t=1}^\infty \gamma^{t-1} c(m_t,\hat m_t,0)} \leq \frac{\gamma K_c}{(1-\gamma)(1-\gamma K_p)}\mathcal{O}(\frac{1}{\sqrt n}),
\end{equation}
where $\mathcal{O}(\frac{1}{\sqrt n})$  depends on the variance of local noises. 
\end{theorem}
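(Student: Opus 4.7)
The plan is to convert the cost sum into an error sum, then propagate the mean-field estimator error through time via Lemma~\ref{lemma:evolution_mean_field}, and finally close out the geometric discount sum using Assumption~\ref{assump:gamma_k}.

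\textbf{Step 1 (reduce cost to error).} By Assumption~\ref{assum:Lischitz} item~2, $c(m_t,\hat m_t,0)\le K_c\Ninf{m_t-\hat m_t}$, so it suffices to bound
\[
S:=\sum_{t=1}^{\infty}\gamma^{t-1}\Exp{\Ninf{m_t-\hat m_t}}
\]
by a multiple of $1/\sqrt n$; the target inequality then follows by multiplying by $K_c$.

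\textbf{Step 2 (one-step error recursion).} Let $e_t:=\Exp{\Ninf{m_t-\hat m_t}}$. Applying the tower property to Lemma~\ref{lemma:evolution_mean_field} yields
\[
e_{t+1}\le K_p\,e_t+\mathcal{O}\!\bigl(1/\sqrt n\bigr),\qquad t\in\mathbb{N}.
\]
Since $o_1=m_1$, the estimator~\eqref{eq:h_large_n} starts from $\hat m_1=m_1$, so $e_1=0$. Unrolling the recursion gives
\[
e_t\le C_n\sum_{k=0}^{t-2}K_p^{\,k},\qquad C_n:=\mathcal{O}\!\bigl(1/\sqrt n\bigr),
\]
where the $\mathcal{O}(1/\sqrt n)$ constant depends only on the variance bound on the local noises inherited from~\cite[Lemma~2]{JalalCDC2017}. (The sub-case $K_p=1$, which is exactly the decoupled regime in Assumption~\ref{assump:gamma_k}, reduces to $e_t\le (t-1)C_n$ and is handled identically in Step~3 using $\sum_{t\ge1}(t-1)\gamma^{t-1}=\gamma/(1-\gamma)^2$.)

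\textbf{Step 3 (discounted sum).} Swap the two summations and use $\gamma K_p<1$ (Assumption~\ref{assump:gamma_k}) to evaluate the resulting geometric series:
\[
S\le C_n\sum_{t=1}^{\infty}\gamma^{t-1}\frac{1-K_p^{\,t-1}}{1-K_p}=\frac{C_n}{1-K_p}\!\left(\frac{1}{1-\gamma}-\frac{1}{1-\gamma K_p}\right)=\frac{\gamma\,C_n}{(1-\gamma)(1-\gamma K_p)},
\]
where the numerator factor of $\gamma$ arises from the algebraic identity $(1-\gamma K_p)-(1-\gamma)=\gamma(1-K_p)$, which cancels the $1-K_p$ in the denominator. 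Multiplying through by $K_c$ recovers exactly the bound in~\eqref{eq:estimation_threshold}.

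\textbf{Main obstacle.} The substantive work is already packaged in Lemma~\ref{lemma:evolution_mean_field}; the remaining difficulty is purely bookkeeping, namely verifying that the cancellation in Step~3 produces the stated constant and that the $K_p=1$ case does not disturb the rate. Care is also needed to check that the $\mathcal{O}(1/\sqrt n)$ constant coming out of Lemma~\ref{lemma:evolution_mean_field} is uniform in $t$, since otherwise the geometric manipulation in Step~3 would not be valid; this uniformity is inherited from the fact that $m_t\in\mathcal{M}(n)$ is bounded independent of~$t$, so the concentration bound applied in~\cite[Lemma~2]{JalalCDC2017} gives a time-independent $\mathcal{O}(1/\sqrt n)$ term.
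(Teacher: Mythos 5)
Your proof is correct and follows essentially the same route as the paper's: reduce the cost to the estimation error via Assumption~\ref{assum:Lischitz}, unroll Lemma~\ref{lemma:evolution_mean_field} with $\hat m_1=m_1$ so the homogeneous term vanishes, and close the double geometric sum using $\gamma K_p<1$. The only differences are cosmetic — you work directly with the infinite series (with an explicit $K_p=1$ sub-case) while the paper truncates at horizon $H$ and passes to the limit — and your algebra in Step~3 lands on exactly the stated constant $\frac{\gamma K_c}{(1-\gamma)(1-\gamma K_p)}$.
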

 \begin{proof}
 The proof is presented in~Appendix~\ref{sec:proof_thm:asymptotic}.
 \end{proof}

\begin{definition}[\textbf{Certainty Threshold}]
\emph{The right-hand side of~\eqref{eq:estimation_threshold}  is defined  as the} certainty threshold\emph{. This threshold  depends on the number of nodes  as well as the  Lipschitz constants of the  transition probability matrix and cost function introduced in Assumption~\ref{assum:Lischitz}.}
\end{definition}
\begin{corollary}\label{cor:certainty}
Let Assumptions~\ref{assum:iid}--\ref{assump:gamma_k} hold.  If the collection cost  is greater than the certainty threshold,
 then the optimal solution to Problem~\ref{prob:POMDP} is to use estimator~\eqref{eq:h_large_n} at all times.
\end{corollary}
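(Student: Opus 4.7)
The plan is to compare two candidate strategies: the \emph{never-collect} strategy $g^0$ defined by $a_t=0$ for all $t$, against any strategy $g$ that chooses to collect data at least once. The key ingredient is that Theorem~\ref{thm:asymptotic} already bounds the total expected discounted cost of $g^0$ by the certainty threshold, so it suffices to show that any strategy that ever collects data pays at least one (discounted) collection cost, which by hypothesis exceeds that threshold.

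First, under $g^0$ the decision maker never observes a fresh empirical distribution beyond $m_1$, so the estimator $\hat m_t$ produced by the sufficient statistics described in~\eqref{eq:chap-kolm} reduces to iterated application of $\bar T$ in~\eqref{eq:h_large_T}, i.e., to the infinite-population estimator~\eqref{eq:h_large_n}. Consequently Theorem~\ref{thm:asymptotic} applies and yields
\begin{equation}
J(g^0) = \Exp{\sum_{t=1}^\infty \gamma^{t-1} c(m_t,\hat m_t,0)} \leq \frac{\gamma K_c}{(1-\gamma)(1-\gamma K_p)}\mathcal{O}(1/\sqrt{n}),
\end{equation}
i.e., $J(g^0)$ is dominated by the certainty threshold defined right after~\eqref{eq:estimation_threshold}.

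Next, for any strategy $g$ that prescribes $a_t=1$ for at least one realization at some time $t_0 \in \mathbb{N}$, non-negativity of the per-step cost in~\eqref{eq:per_step_cost_general} implies
\begin{equation}
J(g) \geq \Exp{\gamma^{t_0-1} c(m_{t_0},\hat m_{t_0},1) \mid a_{t_0}=1} \Prob{a_{t_0}=1} \geq \gamma^{t_0-1}\,c_{\text{col}},
\end{equation}
where $c_{\text{col}}$ denotes the (worst-case) collection cost appearing in $c(\cdot,\cdot,1)$ referenced in the corollary's hypothesis. Taking $t_0=1$ (the least favourable discounting) gives $J(g)\geq c_{\text{col}}$. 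If $c_{\text{col}}$ strictly exceeds the certainty threshold, then $J(g) > J(g^0)$ for every strategy $g$ that ever collects, and hence $g^0$ is optimal for Problem~\ref{prob:POMDP}. Equivalently, the Bellman operator in~\eqref{eq:bellman_infinite} strictly prefers $a=0$ at every $(x,y)$.

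The only delicate step I foresee is pinning down precisely what ``collection cost'' means in terms of $c(m,\hat m,1)$: the statement of the corollary implicitly treats the additive collection term in~\eqref{eq:per_step_cost_general} (the part contributed by the action $a=1$ over and above the estimation-error term) as a scalar lower bound $c_{\text{col}}$ that applies uniformly over $(m,\hat m)$. Making this rigorous simply requires factoring $c(m,\hat m,a)$ into an estimation-penalty term plus an action-dependent term and invoking non-negativity of both; once that is in place, the inequalities above go through unchanged and the corollary follows directly from Theorem~\ref{thm:asymptotic}.
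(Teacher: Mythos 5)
Your overall plan coincides with the paper's: compare the never-collect strategy (whose cost Theorem~\ref{thm:asymptotic} bounds by the certainty threshold) against any strategy that collects at least once, and argue the latter must pay at least one collection cost exceeding that threshold. However, there is a genuine gap in how you handle the first collection time. You lower-bound $J(g)$ by $\gamma^{t_0-1}c_{\text{col}}$ and then assert that ``taking $t_0=1$'' yields $J(g)\geq c_{\text{col}}$. This is backwards: $t_0=1$ gives the \emph{largest} of these lower bounds, and for a strategy whose first collection occurs at a late (possibly random) time $t_0$, the only bound you have is $J(g)\geq \gamma^{t_0-1}c_{\text{col}}$, which can be far smaller than the certainty threshold. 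Your chain $J(g)\geq c_{\text{col}}>\text{threshold}\geq J(g^0)$ therefore fails for every strategy that delays its first collection, which is exactly the interesting case.

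The paper's proof avoids this with the one observation you omit: the two strategies incur \emph{identical} costs up to the first collection time, because until then both take $a_t=0$, see the same (blank) observations, and produce the same estimates. This cancels the common head of the two cost sums and localizes the comparison at time $t_0$, where the extra collection cost $\gamma^{t_0-1}\ell$ and the avoided tail estimation cost both carry the same discount factor $\gamma^{t_0-1}$; one then invokes Theorem~\ref{thm:asymptotic} to dominate the (rescaled) tail by the certainty threshold. Your closing remark that the Bellman equation~\eqref{eq:bellman_infinite} prefers $a=0$ at every $(x,y)$ is the right intuition for a state-by-state version of this argument, but it is asserted rather than derived, and it would require the threshold bound to hold from an arbitrary state of the planning space, not just from the initial condition $m_1=\hat m_1$ used in Theorem~\ref{thm:asymptotic}. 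To repair your proof, replace the global comparison of $J(g)$ with $J(g^0)$ by a comparison of the conditional costs-to-go at the first collection time, after cancelling the common prefix.
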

\begin{proof}
Consider two scenarios,  where  in the first  one  the strategy is to always use the estimator function~\eqref{eq:h_large_n},  and in the second one the strategy  is to collect data at least once. The costs of  both scenarios  until the first data collection are the same.  The proof  now follows directly  from Theorem~\ref{thm:asymptotic}.
\end{proof}
\begin{remark}[De Finetti's theorem]
\emph{It is worth mentioning that when $n=\infty$,  exchangeable random variables behave as i.i.d. variables,  according to de Finetti's theorem~\cite{diaconis1980finetti}. }
\end{remark}

\section{Linear dynamics: A special case}\label{sec:linearcase}
%

\subsection{Optimal estimator}
The model presented in Subsection~\ref{sec:linear_model}  has  various applications, e.g., in  remote-state estimation wherein an encoder sends a Markovian process to a decoder over an unreliable link under the Transmission Control Protocol (TCP),  where an acknowledgement  is sent to  the encoder upon receiving the data at the decoder~\cite{lipsa2011remote,nayyar2013optimal}.  The objective of the  encoder and decoder is  to collaborate in such a way that the cost function~\eqref{eq:cost_LQ_optimal}  is minimized, where $g$ is the transmission  strategy of the encoder and $h$ is the estimation strategy of the decoder.    By using  majorization theory  and  imposing some conditions such as  symmetric and unimodal  probability distribution of random variables,   it is shown in~\cite{lipsa2011remote,nayyar2013optimal} that the optimal estimator is Kalman-like. In what follows, we extend the above  findings by a simple proof technique using the proposed planning space, and subsequently establish a separation theorem  that holds for the general case of multi-dimensional  dynamic systems with an arbitrary probability distribution, without resorting to majorization theory or  any other  conditions on the random variables. To this end, define $x_t \in \mathbb{R}^{d_s}$ as the last (credible) observed data and $y_t \in \N$ as the elapsed time since then.  
\begin{theorem}[Separation principle]\label{thm:separation}
The problem of finding the optimal estimator in~\eqref{eq:cost_LQ_optimal} is separated from that of the optimal scheduling strategy. In particular, the optimal  estimator has a structure similar to  the minimum mean-square estimator  with  the following   Kamlan-like  update rule:
\begin{equation}\label{eq:update_rule_KF}
\hat m_{t+1}= A \hat m_t+ L(y_{t+1}) (x_{t+1} - A \hat m_t),
\end{equation}
where $L(y_{t+1})=1$ if $y_{t+1}=0$ and $L(y_{t+1})=0$ if $y_{t+1} \neq 0$. This result holds regardless of the sampling strategy,   the order of dynamics, the probability distribution of the initial states, credibility of data and  additive noises. 
\end{theorem}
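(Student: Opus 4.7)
My plan is to exploit the planning-space reformulation from Section~IV, under which $(o_{1:t}, a_{1:t-1})$ and $(x_{1:t}, y_{1:t}, a_{1:t-1})$ carry the same information. Fix an arbitrary scheduling strategy $g$; since the quadratic term in the per-step cost~\eqref{eq:cost_LQ_optimal} is the only contribution involving $\hat m_t$ and since $z(a_t)\ge 0$, the estimator that minimizes $J(g)$ \emph{for every fixed $g$} is the conditional mean
\begin{equation*}
\hat m_t^{\ast} \;=\; \mathbb{E}[m_t \mid o_{1:t}, a_{1:t-1}] \;=\; \mathbb{E}[m_t \mid x_{1:t}, y_{1:t}, a_{1:t-1}].
\end{equation*}
Because the MMSE estimator is defined directly by this conditional expectation and no linear structure is imposed on it a priori, no Gaussianity or unimodality hypothesis on the primitive random variables is required, which is what distinguishes the claim from the majorization-based arguments of \cite{lipsa2011remote,nayyar2013optimal}.

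The next step is to evaluate the conditional mean in closed form using the linear recursion~\eqref{eq:dynamic_example_3}. Unrolling $m_{s+1}=A m_s + \bar w_s$ from the index $t-y_t$ of the last credible observation up to the present yields
\begin{equation*}
m_t \;=\; A^{y_t} x_t + \sum_{k=0}^{y_t-1} A^{k}\,\bar w_{t-1-k}.
\end{equation*}
Each blank event $\{o_s=\B\}$ has probability $1-a_{s-1}q$, a quantity determined by $a_{s-1}$ and the independent credibility process $\eta$, while each non-blank event reveals $m_s$ exactly; consequently conditioning on $(x_{1:t}, y_{1:t}, a_{1:t-1})$ does not re-weight the distribution of the noises $\bar w_{t-y_t},\ldots,\bar w_{t-1}$ away from their zero-mean prior, because they occurred strictly after the most recent informative observation. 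Hence the sum vanishes in expectation and $\hat m_t^{\ast}=A^{y_t} x_t$.

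From here the Kalman-like recursion~\eqref{eq:update_rule_KF} follows by case analysis on $y_{t+1}$: if $y_{t+1}=0$ then $x_{t+1}=m_{t+1}$ and $\hat m_{t+1}^{\ast}=x_{t+1}=A\hat m_t^{\ast}+(x_{t+1}-A\hat m_t^{\ast})$, matching $L(y_{t+1})=1$; if $y_{t+1}\neq 0$ then $x_{t+1}=x_t$ and $y_{t+1}=y_t+1$, so $\hat m_{t+1}^{\ast}=A^{y_t+1} x_t=A\hat m_t^{\ast}$, matching $L(y_{t+1})=0$. Since the closed form $\hat m_t^{\ast}=A^{y_t} x_t$ is derived with no reference to how $g$ selects $a_t$, the functional form of the optimal estimator is identical for every scheduling strategy, which is the separation principle being claimed. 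The most delicate step---and hence the main obstacle---is the zero-mean claim for the conditional noise, i.e.\ the \emph{no-dual-effect} property of this setup; I would make it rigorous by induction on $t$, using Lemma~\ref{lemma:tansition_prob_o} together with the mutual independence of $\mathbf w$ and $\eta$ to show that the $\sigma$-algebra generated by $(x_{1:t}, y_{1:t}, a_{1:t-1})$ is independent of $\sigma(\bar w_{t-y_t},\ldots,\bar w_{t-1})$.
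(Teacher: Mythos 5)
Your proposal is correct and follows essentially the same route as the paper's proof: both pass to the $(x_t,y_t)$ planning space, unroll the linear dynamics to write $m_t=A^{y_t}x_t+\sum_{\tau=1}^{y_t}A^{\tau-1}\bar w_{t-\tau}$, use the fact that the post-observation noises are zero-mean and independent of the conditioning information to conclude $\hat m_t^\ast=A^{y_t}x_t$ for every scheduling strategy, and then read off the update rule~\eqref{eq:update_rule_KF} by the case analysis on $y_{t+1}$. The only cosmetic difference is that you invoke the MMSE-equals-conditional-mean fact up front and then compute the conditional mean, whereas the paper expands the quadratic cost and identifies the unique minimizer of the sole term depending on $h^g$ --- the same argument in different packaging --- and your flagged ``delicate step'' is exactly the independence assertion the paper states when it notes the noises $\bar w_{t-y_t:t}$ are zero-mean and independent over time.
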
 
\begin{proof}
The proof is presented in Appendix~\ref{sec:proof_thm:separation}.
\end{proof}
\begin{remark}
\emph{The result of Theorem~\ref{thm:separation} also holds for the time-varying finite-horizon case since the proof technique presented above does not  depend on the time-homogeneity of the model.}
\end{remark}

\subsection{Sampling (scheduling) strategy}
Consider a centralized networked control system wherein  the joint state is perfectly observed  and the optimal joint action is a state-feedback strategy. In  such a case,   every  node must  broadcast  its state  at each  time instant   so that all nodes   can observe the joint state in order to  compute  their control actions accordingly. In practice, however, sharing information is  costly, the quality of data transmission is sometimes compromised,   and  data packets may be  lost in  the communication channels (e.g., in  erasure  channels). Thus,  it is important to be able to  implement   the centralized  solution in a  distributed manner. To this end,   each node can
   solve  a (local) scheduling problem  in order to decide when to  broadcast its state~\cite{hespanha2007survey}. Once the information is broadcast,  other  nodes  can  update  their  estimates of  the state of  the node,  to be used  in their strategies.  For the case where  every node uses a  minimum mean-square estimator and attention is restricted to a threshold-type  scheduling,    a dynamic-program-based solution  can be developed  whose information state is  the estimation error (that is a continuous variable in $\mathbb{R}^{d_s}$ with a model-dependent dynamics)~\cite{hespanha2007survey}.  It is shown in~\cite{lipsa2011remote,nayyar2013optimal} that such  threshold-type policies are optimal under certain conditions.

 In this work, we use a dynamic program with an information state different from~\cite{hespanha2007survey,lipsa2011remote,nayyar2013optimal}, which is a discrete variable  between $0$ and $k$,  independent of the state space dimension  $d_s$,  with  a model-free dynamics. Therefore, it is computationally easy to find  a near-optimal  strategy by solving our proposed dynamic program.

\begin{assumption}\label{ass:sampling}
Suppose that  the per-step cost~\eqref{eq:cost_LQ_optimal} satisfies the inequality $c_t(m_t-\hat m_t, a_t) \leq c_{\textit{max}}$,  where $c_{\textit{max}}$ is a known positive constant and the estimator is the minimum mean-square estimator, i.e., $\hat m_t=\Exp{m_t|o_{1:t},a_{1:t-1}}$, $t \in \mathbb{N}$. 
\end{assumption}

 From  Assumption~\ref{ass:sampling} and the dynamics of  the mean-square estimator given by~\eqref{eq:update_rule_KF},    it follows that  for any $t \in \mathbb{N}$:
\begin{equation}\label{eq:per-step-8}
m_t - \hat m_t=  \ID{y_t > 0}\sum_{\tau=1}^{y_t} A^{\tau -1} \bar w_{t-\tau}.
\end{equation}
Since $w_{1:\infty}$ is an i.i.d. random process, one has:
\begin{equation}\label{eq:cost:general:LQ}
c_t(y_t, a_t)=\Exp{c_t(m_t - \hat m_t, a_t) \mid x_{1:t}, y_{1:t},a_{1:t}},  
\end{equation}
where  the  above per-step cost does not depend on $x_t$; hence, it can be denoted by $c(y_t,a_t)$, $t \in \mathbb{N}$. It is also possible to  consider a special case of Assumption~\ref{ass:sampling}, described below, that provides an explicit  expression for~\eqref{eq:cost:general:LQ}.  
\begin{assumption}\label{ass:limited-LQ}
For any $t \in \mathbb{N}$, $a_t \in \mathcal{A}$ and $m_t \in \Mspace$, let functions~$z(a_t)$ and~$\ell(m_t,a_t)$ in~\eqref{eq:cost_LQ_optimal} be equal to  $1$ and $\ell a_t $, respectively,  where $\ell \in \mathbb{R}_{\geq 0}$.  In addition,   matrix $A$ is symmetric and all of its eigenvalues are within the unit circle.\footnote{For finite-horizon analysis,  $A$ can be any arbitrary matrix.}
\end{assumption}
Under Assumption~\ref{ass:limited-LQ} and Theorem~\ref{thm:separation},~\eqref{eq:cost:general:LQ} can be  computed as follows:
\begin{multline}\label{eq:per-step-3}
c(y_t,a_t)=\Exp{(m_t - \hat m_t)^\intercal (m_t - \hat m_t) + \ell a_t \mid x_{1:t}, y_{1:t},a_{1:t}}\\
= \ID{y_t > 0}\TR(\sum_{\tau=1}^{y_t} (A^\intercal A)^{\tau-1} \bar{\Sigma}^w ) +\ell a_t,\\
=\TR((\mathbf{I}-A^\intercal A)^{-1} (\mathbf I - (A^\intercal A)^{y_t}) \bar{\Sigma}^w) +\ell a_t.
\end{multline}
where  $\mathbf{I}$ is the identity matrix and $\bar \Sigma^w$  is the covariance matrix of $\bar w_t$, $t \in \mathbb{N}$. The following theorem  is a consequence of Theorems~\ref{thm:approximate-pomdp} and~\ref{thm:RL}.
\begin{theorem}[Sampling strategy]\label{cor:linear}
 Let  either Assumption~\ref{ass:sampling} or  Assumption~\ref{ass:limited-LQ} hold.  There is no loss of optimality in restricting attention  to the space of  elapsed times  after the last credible data (i.e.,  there is no need to know the data). More precisely, select  a sufficiently large  $k \in \mathbb{N}$ such that inequality~\eqref{eq:k-large}  is satisfied,  and  simplify the  dynamic program~\eqref{eq:Bellman_approximate}    as follows:
\begin{equation}\label{eq:dp:example3}
\tilde V_k(y)=\min(\tilde V^0_k(y),\tilde V^1_k(y)), \quad y \in \N_k, 
\end{equation}
where  $\tilde V^0_k(y):=c(y,0)
+ \gamma (\ID{y <k} \tilde V_k(y+1) 
+ \ID{y=k} \tilde V_k(0))$ and  $\tilde V^1_k(y):=  c(y,1)+  (1-q) \gamma (\ID{y <k} \tilde V_k(y+1)$ $
 + \ID{y=k} \tilde V_k(0))
+ q \gamma  \tilde V_k(0)$.  Then, a near-optimal sampling strategy is to collect  data when $\tilde V^0_k(y_t) <  \tilde V^1_k(y_t)$.  A similar relationship holds for the case that model structure is not known completely, where   Q-learning algorithm proposed in Algorithm~\ref{alg.RL-Qlearning} converges to a near-optimal solution.
\end{theorem}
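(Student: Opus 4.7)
The plan is to reduce the general dynamic program~\eqref{eq:Bellman_approximate} (whose state is $(x_t,y_t) \in \Mspace \times \N_k$) to one whose state is only $y_t$, by showing that under either Assumption~\ref{ass:sampling} or Assumption~\ref{ass:limited-LQ}, both the expected per-step cost and the transition kernel of $y_t$ are independent of the last credible data $x_t$. Once this independence is established, the $\varepsilon$-optimality is inherited from Theorem~\ref{thm:approximate-pomdp}, and the convergence of the Q-learning variant is inherited from Theorem~\ref{thm:RL}.

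First, I would invoke Theorem~\ref{thm:separation} so that the optimal estimator is the Kalman-like update~\eqref{eq:update_rule_KF}, which in turn yields the error representation~\eqref{eq:per-step-8}: the estimation error $m_t - \hat m_t$ is a deterministic linear combination of the noises $\bar w_{t-1},\bar w_{t-2},\dots,\bar w_{t-y_t}$ that have occurred since the last credible observation. Because $\bar w_{1:\infty}$ is i.i.d.\ and independent of the strategy and of $x_t$, the conditional law of $m_t - \hat m_t$ given the history $(x_{1:t},y_{1:t},a_{1:t})$ depends only on $y_t$. Combined with the structure of the cost in~\eqref{eq:cost_LQ_optimal} (additive, with a state-independent penalty on $a_t$ under Assumption~\ref{ass:limited-LQ}, or bounded under Assumption~\ref{ass:sampling}), this gives the key identity $\Exp{c(m_t,\hat m_t,a_t) \mid x_{1:t},y_{1:t},a_{1:t}}=c(y_t,a_t)$; under Assumption~\ref{ass:limited-LQ} this expectation is the explicit closed form~\eqref{eq:per-step-3}.

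Next I would verify that the transition kernel of $y_t$ is likewise $x$-free. By Lemma~\ref{lemma:tansition_prob_o}, the conditional probability that $o_{t+1}=\B$ equals $(1-a_t q)$ and the event $\{o_{t+1}\neq \B\}$ has probability $a_t q$, both independent of $x_t$; and by~\eqref{eq:hat-f} the update of $y_t$ is either $y_t \mapsto y_t+1$ (on $\B$) or $y_t \mapsto 0$ (otherwise), independent of what the new $x_{t+1}$ happens to be. Substituting into the Bellman recursion~\eqref{eq:Bellman_approximate}, the terms $\TM^y(\cdot,x)$ disappear from the cost, the expectation against the observation kernel reduces to a Bernoulli$(a_tq)$ mixture over the two deterministic successors of $y_t$, and one obtains exactly the reduced Bellman equation~\eqref{eq:dp:example3}. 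By Theorem~\ref{thm:approximate-pomdp}, any minimizer of~\eqref{eq:dp:example3} with $k$ chosen per~\eqref{eq:k-large} is $\varepsilon$-optimal for Problem~\ref{prob:POMDP}.

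For the reinforcement learning claim, the same reduction makes $y_t$ a fully observed, finite-state controlled Markov chain with bounded per-step cost, on which Algorithm~\ref{alg.RL-Qlearning} operates with Q-table of size $|\N_k||\mathcal{A}|$; the standard Q-learning convergence argument used in the proof of Theorem~\ref{thm:RL} then applies verbatim, yielding a greedy policy that is $\varepsilon$-optimal for Problem~\ref{prob:RL}. The step I expect to require the most care is verifying the cost-independence of $x_t$ under the generic Assumption~\ref{ass:sampling}: there one cannot use the explicit trace formula~\eqref{eq:per-step-3} and must instead appeal to~\eqref{eq:per-step-8} together with time-homogeneity of the noise law, ensuring that the conditional expectation~\eqref{eq:cost:general:LQ} is genuinely a function of $(y_t,a_t)$ alone; once that is in hand the remainder of the argument is essentially bookkeeping in the reduced Bellman equation.
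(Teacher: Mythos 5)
Your proposal is correct and follows essentially the same route as the paper: the paper's own proof is a one-line observation that the conditional per-step cost in \eqref{eq:cost:general:LQ} and \eqref{eq:per-step-3} does not depend on $x_t$, so $x_t$ is irrelevant information, with the $\varepsilon$-optimality and Q-learning claims inherited from Theorems~\ref{thm:approximate-pomdp} and~\ref{thm:RL} exactly as you describe. Your extra verification that the transition kernel of $y_t$ is also $x$-free is a detail the paper leaves implicit, but it does not change the argument.
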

\begin{proof}
The proof follows form the fact that the  per-step cost in~\eqref{eq:cost:general:LQ} and \eqref{eq:per-step-3} does not depend on $x_t$; hence, $x_t$ is irrelevant information. 
\end{proof}

\begin{corollary}
Let Assumptions~\ref{assum:iid},~\ref{assum:Lischitz} and~\ref{ass:limited-LQ} hold. The certainty threshold  in the case of linear dynamics  with  quadratic cost function converges to zero at the rate $1/n$, which is faster than the generic rate of $1/\sqrt n$. In this case, $\bar \Sigma^w=\frac{1}{n} \DIAG(\Sigma^w,\ldots,\Sigma^w)$, where  $\Sigma^w$ is the covariance  matrix of an individual local noise.
\end{corollary}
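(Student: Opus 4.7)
The plan is to bypass the generic Lipschitz machinery behind Theorem~\ref{thm:asymptotic} and instead exploit the explicit per-step cost formula~\eqref{eq:per-step-3}, which is available in the linear-quadratic setting. The point is that in this case the only source of the certainty threshold is the variance of $\bar w_t$, which we can compute in closed form and observe to be of order $1/n$ (rather than $1/\sqrt n$).

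First I would establish the identity $\bar\Sigma^w = \tfrac{1}{n}\DIAG(\Sigma^w,\ldots,\Sigma^w)$. By definition $\bar w^d_t = \tfrac{1}{n}\sum_{i=1}^n v^{i,d} w^i_t$. Using that under Assumption~\ref{assum:iid} the noises $w^i_t$ are i.i.d.\ with common covariance $\Sigma^w$, a direct computation gives
\begin{equation}
\operatorname{Cov}(\bar w^d_t, \bar w^{d'}_t) \;=\; \frac{1}{n^2}\sum_{i=1}^n v^{i,d} v^{i,d'} \,\Sigma^{w}.
\end{equation}
The eigenvectors $\mathbf V(:,d)$ of the symmetric adjacency matrix are orthogonal, and the normalization $\tfrac{1}{n}\sum_i (v^{i,d})^2 = 1$ gives $\tfrac{1}{n}\sum_i v^{i,d} v^{i,d'} = \delta_{d,d'}$. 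Hence the off-diagonal blocks vanish and each diagonal block equals $\Sigma^w/n$, which proves the claimed form of $\bar\Sigma^w$.

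Next I would plug this into~\eqref{eq:per-step-3} with $a_t=0$ and bound the total discounted cost of the always-estimate policy. Under Assumption~\ref{ass:limited-LQ}, $A^\intercal A$ has all eigenvalues in the open unit disk, so $(\mathbf I - A^\intercal A)^{-1}(\mathbf I - (A^\intercal A)^{y})$ is uniformly bounded in $y$ by some constant matrix $M$. Therefore
\begin{equation}
\mathbb{E}\!\left[\sum_{t=1}^\infty \gamma^{t-1} c(m_t,\hat m_t,0)\right] \;\leq\; \frac{1}{1-\gamma}\,\TR(M\,\bar\Sigma^w) \;=\; \frac{1}{n}\cdot\frac{\TR(M\,\DIAG(\Sigma^w,\ldots,\Sigma^w))}{1-\gamma},
\end{equation}
which is explicitly $\mathcal{O}(1/n)$ and does not invoke Lemma~\ref{lemma:evolution_mean_field} at all. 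Together with the argument of Corollary~\ref{cor:certainty} (comparing always-estimate to any strategy that collects at least once), this sharper bound is the certainty threshold in the linear-quadratic case.

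The main subtlety, rather than any calculational obstacle, is clarifying \emph{why} the generic rate $1/\sqrt n$ in Theorem~\ref{thm:asymptotic} degrades to $1/n$ here: the generic argument passes through a Lipschitz bound on $\|m_{t+1}-\hat m_{t+1}\|_\infty$ whose irreducible term is the standard deviation $\mathcal{O}(1/\sqrt n)$ of the empirical distribution, whereas the quadratic cost~\eqref{eq:cost_LQ_optimal} already squares the estimation error, so the relevant quantity is the \emph{variance} $\mathcal{O}(1/n)$ of $\bar w_t$, not its standard deviation. I would add a short remark to this effect so that the reader sees the improvement is intrinsic to the quadratic-cost structure and not a sharper analysis of the same bound.
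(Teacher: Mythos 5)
Your proof is correct, and since the paper states this corollary without proof, your argument is precisely the derivation the authors intend: the orthonormality/normalization of the eigenvectors together with the i.i.d.\ assumption gives $\bar\Sigma^w=\tfrac{1}{n}\DIAG(\Sigma^w,\ldots,\Sigma^w)$, and substituting this into the explicit per-step cost~\eqref{eq:per-step-3} (whose $y$-dependent factor is uniformly bounded because $A$ is symmetric with spectrum in the unit disk) yields a total discounted always-estimate cost of order $\TR(\cdot\,\bar\Sigma^w)=\mathcal{O}(1/n)$. Your closing remark correctly isolates why the rate sharpens: Theorem~\ref{thm:asymptotic} bounds the cost by the first power of the estimation error (rate $1/\sqrt n$), whereas the quadratic cost is governed by the variance of $\bar w_t$ (rate $1/n$), so the improvement is structural rather than a tighter analysis of the same bound.
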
 
\begin{remark}[Time delay]
\emph{All the results presented in  this paper, including Theorems~\ref{thm:separation} and~\ref{cor:linear}, extend naturally to the case where  observations are received with a fixed time delay $\tau \in \N$,  by simply replacing  $y_t$ with  $y_t+\tau$.}
\end{remark}
\begin{remark}[Mean-field approximation]
\emph{When $n=\infty$ and the dynamics are decoupled,   the infinite-population (linear) model presented in Remark~\ref{remark:linear_inf}  may  be used  in Theorem~\ref{cor:linear} to provide a scale-free approximation.}
\end{remark}

\begin{remark}[Age of information]
\emph{Note that the cost function~\eqref{eq:per-step-3} is exponential  with respect to $y_t$,  reflecting the fact that  the quality of the minimum mean-square estimator deteriorates exponentially in the absence of   credible data.  Nonetheless,  it is  possible to consider a simpler  cost function, e.g.,  an  affine or quadratic cost function in $y_t$, for which the minimum of  the right-hand side of the dynamic program~\eqref{eq:dp:example3} can be obtained more efficiently.  This case is then related to  real-time status updating, where  $y_t$ is the \emph{age of information},  representing the freshness of  data, and  the objective is to monitor a phenomenon  of interest in a timely manner~\cite{alberts1997information,kaul2012real,kadota2018scheduling}.  Hence,  the dynamic program~\eqref{eq:dp:example3} and its reinforcement learning version  can be used to  determine a low-complexity near-optimal solution for minimizing the age of information.
}
\end{remark}


\subsection{Noisy observations  with   Gaussian random variables}
In this subsection, we  show that the presence of measurement noise adversely impacts the tractability  gained by the proposed planning space, and   consequently finding an $\varepsilon$-optimal solution becomes NP-hard. However, we demonstrate that the resultant optimization problem is  a deterministic nonlinear dynamic optimization problem that may  be solved numerically by various  computational tools. Suppose that Assumption~\ref{ass:limited-LQ} holds, and  that local noises are Gaussian. Let $o^i_{t} \in \mathbb{R}^{d_o}$, $d_o \in \mathbb{N}$, be  the noisy observation of node $i$ at time $t$, i.e.,
 $o^i_{t}=Cs^i_t+  \xi^i_t$,
where $\xi^i_{1:\infty}$ is an i.i.d. Gaussian random process with zero mean and finite covariance matrix $\Sigma^{i,\xi} \in \mathbb{R}^{d_o \times d_o}$. In addition, it is assumed that the measurement noises  $\{\xi^i_{1:\infty}\}_{i \in \mathbb{N}_n}$ are mutually independent across nodes, and are also independent from the previously defined primitive random variables.  Then,
\begin{equation}
\bar o^d_{t}:= \frac{1}{n} \sum_{i=1}^n v^{i,d} o^i_t= C_d m^d_t + \bar \xi_t^d,
\end{equation}
where $C_d:= \frac{1}{n}\sum_{i=1}^n v^{i,d} C$ and $\bar \xi^d_t:=\frac{1}{n}\sum_{i=1}^n v^{i,d} \xi^i_t$ with the covariance matrix $\bar \Sigma^{\xi,d}:= \frac{1}{n^2}\sum_{i=1}^n (v^{i,d})^2 \Sigma^{i,\xi} $.   For simplicity of  presentation, assume that $q=1$, and that the horizon is finite. Therefore, 
\begin{equation}
 o_{t+1}:= C(a_t) m_{t+1} + E(a_t) \bar \xi_{t+1}, 
\end{equation}
where  $o_t=\VEC(o_t^1,\ldots,o_t^D)$, $C(a_t):=a_t \DIAG(C_1,\ldots,C_D)$, $\bar \xi_t=\VEC(\bar \xi_t^1,\ldots,\bar \xi^D_t)$, $\bar \Sigma^\xi=\DIAG(\bar \Sigma^{\xi,1},\ldots,\bar \Sigma^{\xi,D})$  and $E(a_t)=a_t$. In general,  $\B$ observation does  not carry  the same information that  zero observation  does. However, when attention is restricted to Gaussian  random variables,  the  conditional  expectation of the state, given zero observation,  is equal to that given  the blank observation  because  the innovation processes associated  with both cases are zero.  Subsequently, from~\cite{meier1967optimal}, one can use the celebrated Kalman filter  to  compute  the optimal state estimate from  noisy observations. In particular, given any realization $a_{1:t}$, the minimization in~\eqref{eq:cost_LQ_optimal} reduces to a mean-square  optimization problem,  where the best nonlinear estimator is known to be $\hat m_{t}=\Exp{m_{t} |o_{1:t},a_{1:t-1}}$.   In a way similar to~\cite{meier1967optimal}, define the  following covariance matrix: 
\begin{align}\label{eq:filter_update}
 P_{t+1}&=  A P_{t} A^\intercal+ \bar \Sigma^w -  A P_{t} C^\intercal(a_t)  ( C(a_t) P_{t}  C^\intercal(a_t) \nonumber \\
 &\quad +  E(a_t) \bar \Sigma^\xi  E^\intercal (a_t))^{-1} C(a_t) P_{t}  A^\intercal, \quad t \in \mathbb{N},
\end{align} 
where $P_{1}=\mathbf{0}_{Dd_s \times Dd_s}$.  Then, the optimal estimator is given by the following Kalman filter:
\begin{equation}
\hat m_{t+1}=   A \hat m_t +L(a_{t})(o_{t+1} - C(a_{t}) A \hat m_t ),
\end{equation}
where $\hat m_1=\mathbf 0_{Dd_s \times 1}$ and the observer gain is described by:
\begin{equation}
L(a_t)=( A P_{t}  C^\intercal (a_t))  (C(a_t) P_{t}  C^\intercal (a_t) + E(a_t)\bar \Sigma^\xi E^\intercal (a_t))^{-1}.
\end{equation}
Consequently, the optimization problem associated with the optimal scheduling strategy  for any finite  horizon $H$  reduces to    a deterministic  non-convex nonlinear  optimization problem as follows:
\begin{equation}
min_{a_{1:H}} \sum_{t=1}^H \gamma^{t-1}(\TR(P_{t+1}) + \ell a_t).
\end{equation}
To find a solution to the above optimization problem, one can construct a dynamic program  based on the history space $\{a_{1:H}\}$,  whose cardinality grows exponentially  with  the horizon (i.e. $2^H$).  Alternatively, one can write a dynamic program  based on the  information state  $P_t$ (which is a continuous variable  in $\mathbb{R}^{Dd_s} \times \mathbb{R}^{Dd_s}$ with nonlinear model-dependent dynamics~\eqref{eq:filter_update}).   For a reasonably large horizon $H$, both dynamic programs  can be very difficult to solve analytically. 

\section{Generalization to Complex Networks}\label{sec:gen}
The main focus  of the previous sections was to study the trade-off between data collection and data estimation, and  for this reason,  the simplest model structure was considered in order for the excessive number of parameters not to obscure the main  results.  In this section, we show how our  results can  naturally be  extended to  more complex   applications.
\subsection{Multiple  decision makers and estimators} 
Consider a network with $\tilde n \in \mathbb{N}$  decision makers, and  let $\hat n(k) \in \mathbb{N}$   denote the number of  estimators whose access to  data is decided by  decision maker $k \in \mathbb{N}_{\tilde n}$. In such a setup, for any  $j \in \mathbb{N}_{\hat n(k)}$, estimator $j$ provides a different estimate $\hat m^{k,j}$  of the states of all nodes. Thus, the objective  is  to minimize the following social cost function:
\begin{equation}
\mathbb{E}^{g}[\sum_{t=1}^\infty \gamma^{t-1}  \sum_{k=1}^{\tilde n} c^k(m_t,\hat m^{k,1}_t,\ldots,\hat m^{k,\hat n(k)},a^k_t)].
\end{equation}
Since the state dynamics is  not influenced by the actions of  decision makers,  and on the other hand, the above cost function is additive, the optimization problem  of each decision maker  (i.e., Problems~\ref{prob:POMDP} and~\ref{prob:RL}) can be solved  separately, with possibly different parameters.  Therefore, the proposed concept of planning space is  applicable here.
\subsection{Multiple reset  actions}
So far, the trade-off between data collection and data estimation has been formulated as a binary decision, where~$a_t=0$ means that the data are not collected and $a_t=1$ means that the data are collected (note that the collected data are not necessarily credible). It is possible to generalize the above decision to multiple decision options, which correspond to, for example,  using  different routes, channels, sensors and receivers. To this end, it is required to define  the last credible data $x^i_t$ and  elapsed time $y^i_t$ for each option $i \in \mathbb{N}$ so that when  the credible data $x^i_t$ are received at a particular time $t$ via the $i$-th option,  its elapsed time  resets to zero (i.e., $y^i_t=0$)~\cite[Remark 2]{JalalACC2015}. This extension  is similar to a  bandit problem, where each option represents a bandit. Consequently, an immediate application of the data collection/estimation analysis is  to address the trade-off between exploration and exploitation  that arises in various learning tasks, where a decision maker wishes to sequentially choose when to explore (collect the data of interest) and when to exploit the learned model (which is data estimation based on the previously collected data). For multiple reset actions,  see  an example of  machine maintenance problem with three actions in~\cite{Jalal2018fault}.
\subsection{Partially exchangeable and partially equivariant  networks} 
Consider partially exchangeable networks for the  Markov-chain model and partially equivariant  networks for the linear model~\cite{Jalal2019MFT,Jalal2019risk}. The population of nodes is partitioned into a few sub-populations, some with Markov-chain model and some with linear model, as described above, wherein the nodes in the former sub-population are exchangeable and the ones in the latter one are equivariant. In this case, the data dynamics becomes more complex but the  proposed approach still works because the dynamics does not depend on the action of the decision maker. For an example of partially exchangeable network, see a  leader-follower network in~\cite{Jalal2019LCSS} with $n$ exchangeable followers and one non-exchangeable leader.

\subsection{Markovian noise and  credibility processes}
Depending on the data (state of the system), Assumptions~\ref{assum:exchangeable} and~\ref{assum:iid} can be  generalized to the case in which local noises   have their own Markov-chain dynamics. In such a case,   dynamic programming decomposition is still valid, with the only difference  that the state of the Markov chain is  added to the system state.  Note that Assumptions 1 and 2 are not required
for the linear case (i.e., Theorems~\ref{thm:separation} and~\ref{cor:linear}).  Similarly,  credibility can be a Markovian process, e.g., see~\cite{JalalACC2019} for  the spread of fake news in a homogeneous network.

\section{Applications}\label{sec:applications}

\subsection{A sensor network with   prioritized data}\label{sec:sensor}
We  consider a sensor (decision maker) that measures a Markovian source $s_t$ at time $t\in  \mathbb{N},$ such as the temperature of a room or the battery charge state of a smart house,  and  wishes to report it to a data center. Let   $\mathcal{S}:=\mathbb{Z}_{d_s+d_w}$, $d_s,d_w \in \mathbb{N},$ be the state space, and   $s_t$  evolve in time  as follows: 
\begin{equation} \label{eq:dynamics_exp1}
s_{t+1}=\begin{cases}
s_{\max}, &      s_t  >d_s,\\
 s_t+w_t, &  | s_t| \leq d_s,\\
s_{\min}, &      s_t  <-d_s,
\end{cases}
\end{equation}
 where  $s_{\max}, s_{\min} \in \mathcal{S}$  are  the saturation levels,   and   for any $t \in \mathbb{N}$,  $w_t \in \mathcal{W}:=\mathbb{Z}_{d_w}$   is an  i.i.d. process with probability distribution function $P_W$. The state $s_{t+1}$  is successfully received at the data center  upon transmission (i.e., $a_{t}=1$) with probability $q \in [0,1]$ at time $t \in \mathbb{N}$, i.e., for any $s \in \mathcal{S}$,
$ \Prob{o_{t+1}=s | s_{t+1}=s,a_t=1}=q$.
In practice, it is not efficient  for the sensor to measure  $s_t$ and transmit it to the data center at each time instant $t \in \mathbb{N}$ because  there is often a cost associated with sensing and transmitting.  Let  $\hat s_t \in \mathcal{S}$ be  the last state received   by time $t \in \mathbb{N}$ at the data center.
 The objective of the sensor  is to find an efficient transmission law that  not only keeps  the estimation error  small at the data center, but  also incurs  minimal  measurement and transmission  cost at the sensor.  The following performance index  is defined:
\begin{equation}\label{eq:cost_exp1}
J=\Exp{\sum_{t=1}^\infty  \gamma^{t-1} \left( |s_t||s_t- \hat s_t|+  \ell a_t \right) }, 
\end{equation}
where $\gamma \in (0,1)$ is  the discount factor and    $\ell \in \mathbb{R}_{>0}$ is the transmission cost.  Note that the estimation error $|s_t- \hat s_t|$ in the above cost function is  weighted by  $|s_t|$;  the rationale for  using such a penalty term  lies in the fact that  in some applications the saturation levels  represent  warning zones,  in the sense  that the estimation error  around  such  zones  carries more weight  than that  around   normal operating  zones: hence,   classical threshold-based strategies that treat  all states equally  are not practical.

\textbf{Example 1.} Suppose  that $s_t$ is the energy level of a battery.  The battery is charged by some renewable generation  sources  and  discharged as serving  demands.   Initially, the nominal value of the battery is $s_1=0$. At each time $t \in \mathbb{N}$,  one  unit energy is added to $s_t$ with probability $p_g$  and  one  unit energy  is depleted from $s_t$ with probability $p_d$,  where the  probability of  the generation $p_g$  is independent of that of the consumption $p_d$. Let $w_t \in \{-1,0,1\}$  denote the change in  the energy level of  the  battery at time $t$, i.e.,  $P(w_t=1)=p_g(1-p_d)$,  $P(w_t=-1)=p_d(1-p_g)$, and $P(w_t=0)=p_g \times p_d+(1-p_g)\times(1-p_d)$.   The objective is to  find  the  optimal  frequency for  transmitting the state of the battery  under the transmission cost $\ell$. Let  $p_g=p_d=0.8$, $d_w=1$, $d_s=s_{\max}=-s_{\min}=99$, $q=0.95$, $\gamma=0.9$, and  $\ell=100$.  Due to the incompleteness of the  information structure  at  the decision making level,  the conventional belief space is large: more precisely,  $\mathbb{P}(s_t\mid o_{1:t}) \in \mathbb{R}^{200}$. In  addition,   reinforcement learning in   belief space is conceptually difficult because  the dynamics of  the belief state depends on the model (i.e., it is a model-dependent planning space). Thus, we  use  a new  information state  based on which the proposed strategies  in  both planning and reinforcement learning cases are  tractable, and their performances are sufficiently close to the optimal performance. The number of  states in the new planning space is $10^3=200 \times 50$.

Let   $x_t \in \mathcal{S}$  denote   the last  credible  observation of the data center  by time $t \in \mathbb{N}$ (i.e., $x_t=\hat s_t$) and $y_t \in \N$  denote  the elapsed time after receiving $x_t$.  A near-optimal transmission law  can be obtained  by solving the Bellman equation~\eqref{eq:Bellman_approximate} in Theorem~\ref{thm:approximate-pomdp} for a  sufficiently large approximation index $k\in \mathbb{N}$, where the space $\Mspace$ and transition probability matrix $\TM$ are replaced by  $\mathcal{S}$ and $T$, respectively, according  to Corollary~\ref{cor:n=1}. After exhaustive simulations, it is observed that the optimal  strategy is obtained  for any approximation index  $k \geq 70$.\footnote{For $\epsilon=10^{-3}$,  inequality~\eqref{eq:k-large} holds for any $k \geq 189$. On the other hand,  it is observed in simulations that the  optimal strategy is obtained for  any $k \geq 70$.} According to~Figure~\ref{fig:sensor},  the  frequency of transmitting the  energy level of the battery to the data center increases as  the energy level approaches  the warning  thresholds.
\begin{figure}[t!]
\centering
\hspace{-.4cm}
\scalebox{1}{
\includegraphics[trim={1.2cm 8cm 0 8.5cm},clip, width=\linewidth]{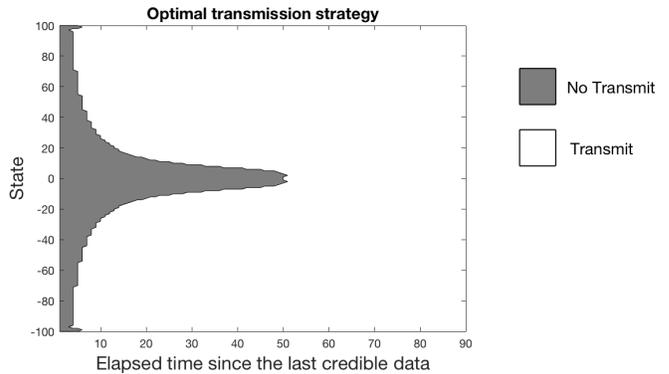}}
\vspace{-.3cm}
\caption{Optimal  transmission strategy in Example 1.}\label{fig:sensor}
\end{figure}
When the generation probability $p_g$, consumption probability $p_d$, and successful delivery probability $q$ are all unknown, one can  use   Q-learning algorithm (Algorithm~\ref{alg.RL-Qlearning}) to obtain the optimal solution. For the purpose of display,  the convergence of the algorithm is depicted at  state $(x,y)=(0,50)$ in Figure~\ref{fig:sensor-unknown}.  It is shown that  $\min_{a \in \mathcal{A}} Q(0,50,a)$ converges to the optimal value function $V(0,50)=160.83$. In this example,  the  Q-learning  algorithm (Algorithm~\ref{alg.RL-Qlearning}) is trained  offline,   where at each training sample a batch update is performed over the entire state-action pairs (Q-functions), also known as synchronized parallel Q-learning, with step sizes inversely proportional to the number of visits (updates) to each pair of state and action.  On a Mac Pro laptop with 2.7 GHz Intel Core i5, the algorithm with a  known-model runs in 250 seconds, and with an unknown-model in  6 hours.  
\begin{figure}[t!]
\centering
\hspace{0cm}
\scalebox{1}{
\includegraphics[trim={0 8.5cm 0 8.5cm},clip, width=\linewidth]{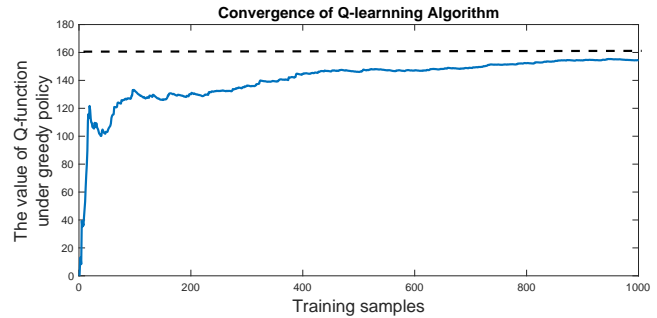}}
\vspace{-.2cm}
\caption{The convergence of  the  Q-learning algorithm (Algorithm~\ref{alg.RL-Qlearning})  at state $(x,y)=(0,50)$ in Example 1.}\label{fig:sensor-unknown}
\end{figure}

\subsection{A communication network with packet drop} 
Inspired by recent developments in  networked control systems~\cite{hespanha2007survey,lipsa2011remote,nayyar2013optimal} and deep teams~\cite{Jalal2019MFT, Jalal2019risk,Jalal2019LCSS},  we consider   $n$ networked controllers  that use  a deep structured optimal state-feedback strategy. In this  case, the dynamics of  the deep state (weighted average of  the states of the controllers) is in the form of~\eqref{eq:dynamic_example_3}.  Now, consider an authority (e.g.,  an independent service operator in a smart grid)  wishing to collect the deep state,  transmit it over an unreliable channel, and eventually receive it at a decoder, located  far away from the control site.  The objective is to find an efficient way  to construct a reliable estimate at the decoder with minimum possible  collection cost, while  taking into account the topology of the network and  unreliability of the transmission (formulated as packet drop).  A block  diagram of the above system is displayed in Figure~\ref{fig:scheme}.


\textbf{Example 2.} Consider two topologies, a complete graph and a star graph, with the dominant eigenvalues  $(n-1)$ and $\pm \sqrt{n-1}$, respectively. Suppose $L=1$, $\alpha(0)=0$, and $\alpha(1)=\frac{A}{n-1}$, $A \in \mathbb{R}$, in the vectorized dynamics~\eqref{eq:dynamics_example_3}, where $v^C=\VEC(1,\ldots,1)$ and $v^S=\frac{\sqrt n}{\sqrt{2(n-1)}}\VEC(\pm\sqrt{n-1},1,\ldots,1)$ are the eigenvectors of the dominant eigenvalues of the complete and star graphs, respectively.  Hence,  the dynamics of the weighted average of the dominant mode of the complete graph is given by:
\begin{equation}
m^C_{t+1}=A m^C_{t} +\bar w^C_t,
\end{equation}
where $m^C_t= \frac{1}{n}\sum_{i=1}^n v^c(i)s^i_t=\frac{1}{n}\sum_{i=1}^n s^i_t$ and $\bar w^C_t=\frac{1}{n}\sum_{i=1}^n w^i_t$. Similarly,  the dynamics of the weighted average of the dominant mode of the star graph is described  by:
\begin{equation}
m^S_{t+1}=\frac{\pm A}{\sqrt{n-1}} m^S_{t}+ \bar w^S_t,
\end{equation}
where  $m^S_t=\frac{1}{n}\sum_{i=1}^n v^S(i) s^i_t= \frac{\pm1 }{\sqrt{2n}}s^1_t+\frac{1}{\sqrt{2n(n-1)}}\sum_{i=2}^n s^i_t$  and $\bar w^S_t=\frac{\pm1 }{\sqrt{2n}}w^1_t+\frac{1}{\sqrt{2n(n-1)}}\sum_{i=2}^n w^i_t$,  with   $s^1_t$ and $w^1_t$ denoting the state and local noise  of the  central node.  The per-step cost function is given by~\eqref{eq:cost_LQ_optimal} under Assumption~\ref{ass:limited-LQ}.   In addition, suppose that the  probability of data  being dropped is $1-q=0.1$ and  the discount factor is $ \gamma=0.85$.   Local noises are i.i.d. random variables with  zero mean and  finite variance $\Sigma^w$ (that are not necessarily Gaussian or symmetric with a unimodal
distribution).  From Theorem~\ref{cor:linear}, one can find an optimal strategy for a sufficiently large approximation index $k \in \mathbb{N}$.  The optimal sampling  strategy is shown in Figure~\ref{fig:communication} with respect to the number of nodes $n$  and the variance of  local noises $\Sigma^w$.  In addition, the optimal estimate is constructed at the decoder  based on the result of Theorem~\ref{thm:separation}.   It is observed  from~Figure~\ref{fig:communication} that the data must be sampled more frequently in the complete graph, as the variance of noise increases.  This is not surprising because information flows faster in a complete graph. Furthermore, according to this figure,  the certainty  threshold  of  the complete graph   is larger than  that  of   the star graph,  with respect to  the number of nodes. In these simulations, the approximation index $k$ is  set to $200$, which guarantees $\epsilon$-optimality of the proposed strategies for any $\epsilon \geq 10^{-5}$.
  
In the case when  collection cost $\ell$,  packet-drop probability $1-q$,  number of nodes $n$,  and  system matrices $A$ and $\Sigma^{w}$ are all unknown, one can use  the RL algorithm proposed in~Theorem~\ref{thm:RL} to obtain a near-optimal scheduling  strategy. It is to be noted that the proposed RL is not applicable to the case where the  topology is unknown because we still need to know  the eigenvectors of the graph Laplacian for computing  the data of interest.

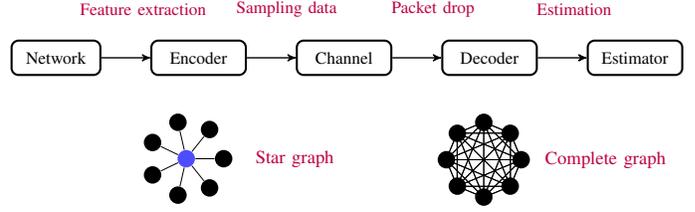
\begin{figure}[t!]
\hspace{-.4cm}
\scalebox{0.65}{
\begin{tikzpicture}[node distance=1cm, auto]
 \node[punkt] (market) {Network};
 \node[punkt, inner sep=5pt,right=1cm of market]
 (encod) {Encoder};
  \node[punkt, inner sep=5pt,right=1cm of encod]
 (chan) {Channel};
   \node[punkt, inner sep=5pt,right=1cm of chan]
 (dec) {Decoder};
  \node[punkt, inner sep=5pt,right=1cm of dec]
 (est) {Estimator};
 \node[above=.5cm of market] (dummy) {};
 \node[right=of dummy, color=purple] (t) { \vspace*{-.5cm} \hspace{-1cm} Feature extraction};
  \node[above=.5cm of encod] (dummy1) {};
  \node[right=of dummy1, color=purple] (t) {\hspace{-.6cm} Sampling  data};
    \node[above=.5cm of chan] (dummy2) {};
  \node[right=of dummy2, color=purple] (t) {\hspace{-.4cm} Packet drop};
      \node[above=.5cm of dec] (dummy3) {};
  \node[right=of dummy3, color=purple] (t) {\hspace{-.4cm} Estimation};
\draw[->,thick](market) -- node{}(encod); 
\draw[->,thick](encod) -- node{}(chan); 
\draw[->,thick](chan) -- node{}(dec); 
\draw[->,thick](dec) -- node{}(est);
\end{tikzpicture}}
\vspace*{.5cm}

\scalebox{0.7}{
\begin{tikzpicture}
      \node[color=purple, thick] (360:0mm)  {\hspace{4cm} Star graph};
    \node[circle,fill=blue!70] at (360:0mm) (center) {};
    \foreach \n in {1,...,7}{
        \node[circle,fill=black!100] at ({\n*360/7}:.7  1cm) (n\n) {};
        \draw (center)--(n\n);
    }
\end{tikzpicture}}

\vspace*{-1.2cm}
\scalebox{0.7}{
\begin{tikzpicture}
\hspace*{5cm}
   \node[color=purple, thick] (360:0mm)  {\hspace{4.5cm} Complete graph};
    \foreach \n in {1,...,8}{
        \node[circle,fill=black!100] at ({\n*360/8}:.7 1cm) (n\n) {}; 
         \foreach \m in {1,...,8}{
        \draw[-, color=black!100 ] ({\n*360/8}:.7 1cm) -- ({\m*360/8}:.7 2cm);    }
    }
\end{tikzpicture}}
\caption{The block diagram of the system in Example 2.}\label{fig:scheme}
\end{figure}

\begin{figure}[t!]
\centering
\hspace{-.6cm}
\scalebox{1}{
\includegraphics[trim={0 5cm 0 6cm},clip, width=\linewidth]{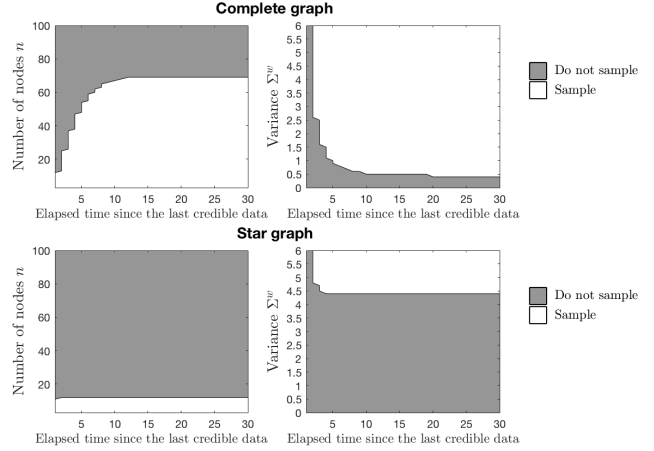}}
\vspace{-.6cm}
\caption{Near-optimal strategies in Example 2.   In the left-hand side plots the dependency of the strategies to the number of nodes and elapsed time is demonstrated, using the parameters:  $A=0.8$,  $\ell=0.4$ and  $\Sigma^{w}=6$.  In the right-hand side plots the dependency of the strategies to the variance of  local noises and elapsed time is displayed,  using parameters: $A=0.9$, $\ell=1$ and  $n=5$. 
}\label{fig:communication}
\end{figure}

\subsection{A social network with binary states}\label{sec:social}
 Consider a social network consisting of $n$  users.  Denote by  $s^i_t \in \mathcal{S}$ the opinion of  user $i \in \mathbb{N}_n$ at time $t \in \mathbb{N}$.  Every  user $i \in \mathbb{N}_n$  independently  changes its opinion  from $s^i_t$ to $s^i_{t+1}$ with probability  $\Prob{s^i_{t+1} \mid s^i_t, m_t}$ at time $t$.  An agency  wants to conduct a survey   from   users to collect the percentage of their opinions and update it frequently;  however,  it  is not practical (and is expensive) to  conduct a survey  at every time step. Consequently, it is desirable to  design a cost-effective yet informative strategy for conducting surveys. 
 
Let  $\ell \in \mathbb{R}_{>0}$ be the price of conducting a survey (e.g., operators' costs and also monetary rewards   to incentivize the users  to  participate) and  $q$ be the probability that  the  result of a survey  is credible. If a survey result is not credible, it is  thrown away.  When there is no survey,  the agency uses  maximum \emph{a posteriori} probability  (MAP) estimator to estimate the percentage of opinions from its previously conducted surveys, i.e.,
\begin{equation}\label{eq:MAP}
\hat m_t=\argmax_{m \in \Mspace} \left(\Prob{m_t=m | o_{1:t}, a_{1:t}}\right).
\end{equation}
Given a discount factor $\gamma \in (0,1)$, the  objective is to   minimize the following  expected total discounted cost:
\begin{equation}
J=\Exp{\sum_{t=1}^\infty \gamma^{t-1} \left( D_{\text{KL}} (m_t || \hat m_t) + \ell a_t  \right) },
\end{equation}
where  $D_{\text{KL}} (m_t || \hat m_t)= \sum_{s \in \mathcal{S}}  m_t(s) \log(\frac{m_t(s)}{\hat m_t(s)}) $ denotes the Kullback–Leibler divergence.   At any time $t \in \mathbb{N}$,  let  $x_t \in \Mspace$  be the  empirical distribution  of the opinions collected at the last  survey and $y_t \in \N$ be the elapsed time after the last survey. From~\eqref{eq:chap-kolm} and~\eqref{eq:MAP},
$\hat m_t=\argmax_{m \in \Mspace} \TM^{y_t}(m,x_t)$,
where the transition probability matrix $\TM$ is given by Theorem~\ref{thm:mean_field_iid}.  A near-optimal strategy  can be obtained by Theorem~\ref{thm:approximate-pomdp} for  a sufficiently large $k$. 
%

\textbf{Example 3.} Consider an election between two candidates $A$ and $B$, i.e., $\mathcal{S}=\{A,B\}$.   An agency  is   interested  to conduct polls among $n \in \mathbb{N}$  voters.  With a  slight abuse of notation,   let   $m_t:=m_t(A) \in \{0, \frac{1}{n},\frac{2}{n}, \ldots,1\}$ represent  the  empirical distribution of the voters  who prefer candidate $A$ at time $t \in \mathbb{N}$.  Since the state space $\mathcal{S}$ is binary, $m_t$ is  sufficient for  determining    the  empirical distribution of the voters  favoring  candidate $B$ which is $1-m_t$.  Note that the empirical distribution $m_t$  takes $n+1$ different values, and its belief state is   $\mathbb{P}(m_t|o_{1:t})  \in \mathbb{R}^{n+1}$. In contrast, our proposed planning space  is a discrete space with $(n+1) \times k$ values, which is a considerable reduction with respect to $n$.  Let the number of voters be $n=50$.  In addition,  let $\Prob{s^i_{t+1}=A \mid s^i_t=A}=0.95$, i.e.,  the probability that voter $i \in \mathbb{N}_n$ chooses candidate $A$  at the next time instant if this is currently the voter's favorite candidate. Similarly, 
$\Prob{s^i_{t+1}=B \mid s^i_t=B}=0.98$, i.e.,  the probability that  voter $i$ chooses candidate $B$ at the next time instant if $B$ is the voter's current choice.  Suppose that the cost of running a poll is   $\ell=0.02$, the discount factor is $\gamma=0.8$, and the  probability of the credibility of a poll is $q=0.95$.  Exhaustive simulations demonstrate that the optimal solution is attained for any approximation index $k \geq 50 $.\footnote{For $\epsilon=10^{-5}$,  inequality~\eqref{eq:k-large} holds for any $k \geq 50$.}  The optimal strategy is displayed in Figure~\ref{fig:election-known}, which demonstrates that when the number of voters in favor of candidate $A$ is $45$, the agency should run a poll after the elapsed time from the latest credible observation exceeds $10$. When, on the other hand,  the number of voters in favor of candidate $B$ is $45$, the agency should wait slightly longer ($11$ time instants) before conducting a new poll. This difference is due to the fact that  voters  are more likely to change their opinion if candidate $A$ is their current choice.     The simulation time is  $116$ seconds on a computer with the specification described   in Example~1.
\begin{figure}[t!]
\centering
\scalebox{1}{
\includegraphics[trim={0 9cm 0 9cm},clip, width=\linewidth]{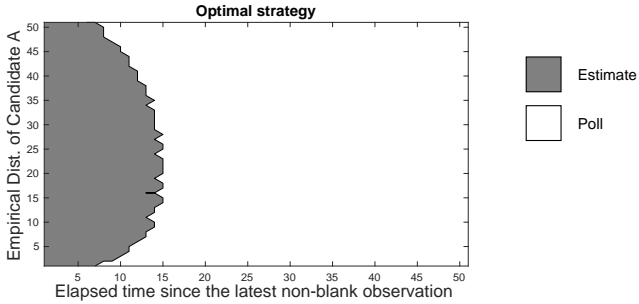}} 
\vspace{-.3cm}
\caption{Optimal strategy  for conducting polls in Example 3. The unsmooth surface of the solution  is  due  to the fact  that the MAP estimator is not smooth.}\label{fig:election-known}
\end{figure}



\section{Conclusions}\label{sec:conclusions}
In this paper,   the trade-off between   data collection   and estimation  in  networks with  both known and unknown models was investigated.  Some  important properties of the dynamics of data were studied first, and  an $\varepsilon$-optimal solution was subsequently provided using the  Bellman equation.  The proposed solution was then extended to the case where the  model is not completely known, using  two different  learning-based approaches. It was also shown at what point   estimating data tends to be more desirable than collecting data as  the number of nodes  increases. The special case of linear dynamics was studied  in more detail, and a separation principle was presented accordingly. Three numerical examples were   provided to demonstrate the effectiveness  of the proposed strategies.

For future research directions, it would be  interesting  to study the  computational complexity of the proposed approach  under various  approximation methods such as stochastic approximation, linearization, particle filtering, quantization, randomization,   and  Monte-Carlo simulation  as well as  different simplifying, yet realistic, assumptions  such as  ergodicity and the myopicity of the decision process.  In particular, the reader is referred to~\cite{toral2014stochastic} and references therein for various stochastic  numerical  methods that can be used for more efficient  computation, albeit at the cost of losing the performance guarantee.

\printbibliography

\appendices

\section{Proof of Theorem~\ref{thm:mean_field_iid}}\label{sec:proof_thm:mean_field_iid}
From~\eqref{eq:mf-def} and~\eqref{eq:model-dynamics}, it follows that for any $s' \in \mathcal{S},t \in \mathbb{N}$,  
\begin{align}\label{eq:proof_tm_iid}
n \cdot m_{t+1}(s')&=\sum_{i=1}^n \ID{s^i_{t+1}=s'}=\sum_{i=1}^n \ID{f(s^i_t,m_t,w^i_t)=s'} \nonumber \\
&= \sum_{s \in \mathcal{S}} \sum_{i=1}^n \ID{s^i_t=s} \ID{f(s,m_t,w^i_t)=s'}.
\end{align}
For every state $s \in \mathcal{S}$, the inner sum in  equation~\eqref{eq:proof_tm_iid}  has $n(1- m_t(s))$ zero terms because there are only $n m_t(s)$ terms with state $s$ according to the definition of empirical distribution~\eqref{eq:mf-def}. These $nm_t(s)$ possibly non-zero terms are independent  binary random variables,  according to Assumption~\ref{assum:iid}, with the success probability
$\Prob{\ID{f(s,m_t,w^i_t)=s'}=1}=\sum_{w \in \mathcal{W}} P_W(w^i_t=w)
\times \ID{f(s,m_t,w)=s'} =T(s',s,m_t)$.
Let $\phi_{m(s)}(s',s,m_t)$ denote the probability distribution function of the sum of these $nm_t(s)$ Bernoulli random variables, which is a binomial distribution with  $nm_t(s)$ trials and success probability $T(s',s,m_t)$. Now, the probability distribution function of $nm_{t+1}(s')$ is  the probability distribution function of the outer sum in equation~\eqref{eq:proof_tm_iid}, consisting of  $|\mathcal{S}|$  independent random variables  (due to  Assumption~\ref{assum:iid}), each of which has the probability distribution function $\phi_{m(s)}(s',s,m_t)$. Therefore, the probability distribution of $nm_{t+1}(s')$  can be expressed as the convolution of the probability distribution functions $\phi_{m(s)}(s',s,m_t)$ over space $\mathcal{S}$, denoted by $\bar \phi(s',m_t)$.

\section{Proof of Lemma~\ref{lemma:tansition_prob_o}}\label{sec:proof_lemma:tansition_prob_o}
One can write:
 \begin{multline}\label{eq:proof-o-t+1}
 \Prob{o_{t+1} \mid x_{1:t}, y_{1:t}, a_{1:t}}   =\sum_{m'}\Prob{m_{t+1}=m' \mid x_{1:t}, y_{1:t}, a_{1:t}}\\
 \times  \Prob{o_{t+1}\mid m_{t+1}=m', x_{1:t}, y_{1:t}, a_{1:t}}.
 \end{multline}
From~\eqref{eq:observation_blank} and~\eqref{eq:observation_not_blank}, the multiplicand  in the right-hand side of equation~\eqref{eq:proof-o-t+1} is given by:
 \begin{multline}\label{eq:proof-o-t+1-first}
 \Prob{o_{t+1}\mid m_{t+1}=m', x_{1:t}, y_{1:t},  a_{1:t}}= a_tq \ID{o_{t+1}=m'} \\
  + (1-a_t q) \ID{o_{t+1}=\B}.
 \end{multline}
 The multiplier  in the right-hand side of equation~\eqref{eq:proof-o-t+1}, on the other hand,  is:
 \begin{align} \label{eq:proof-o-t+1-second}
 &\Prob{m_{t+1}=m' \mid x_{1:t}, y_{1:t},  a_{1:t}} \nonumber \\
&= \sum_{m \in \Mspace} \Prob{m_{t+1}=m' \mid m_t=m,  x_{1:t}, y_{1:t}, a_{1:t}} \times \nonumber \\
&\qquad  \Prob{m_{t}=m \mid   x_{1:t}, y_{1:t},  a_{1:t}} \nonumber \\
&\substack{(a)\\=} \sum_{m \in \Mspace} \Prob{m_{t+1}=m' \mid m_t=m} \times  \nonumber\\ 
&  \frac{\ID{a_t=g_t(o_{1:t},a_{1:t-1})} \Prob{m_{t}=m \mid   x_{1:t}, y_{1:t}, a_{1:t-1}}}{\sum_{\tilde m } \ID{a_t=g_t(o_{1:t},a_{1:t-1})} \Prob{m_t=\tilde m \mid   x_{1:t}, y_{1:t},  a_{1:t-1}}}  \nonumber \\
 &\substack{(b)\\=}  \sum_{m \in \Mspace}  \TM(m',m)\cdot \TM^{y_t}(m,x_t)=\TM^{y_t+1}(o_{t+1},x_t),
 \end{align} 
 where $(a)$ follows from Proposition~\ref{thm:mean_field_evolution_exchangeable}, the fact that  $\mathbf w_t$ is independent of  the information up to   time $t$, i.e. $(o_{1:t}, a_{1:t}, x_{1:t}, y_{1:t})$, as well as  Bayes' rule, and (b) follows from~\eqref{eq:tm} and~\eqref{eq:chap-kolm}. The proof follows from~\eqref{eq:proof-o-t+1}--\eqref{eq:proof-o-t+1-second}.
 
 \section{Proof of Lemma~\ref{lemma:hat c}}\label{sec:proof_lemma:hat c}
To prove the lemma, it is noted that by definition:
 \begin{align}
  &\Exp{c(m_t,\hat m_t, a_t) \mid o_{1:t}, a_{1:t}}  \\
   &= \sum_{m, \hat m \in \Mspace}    c(m,\hat m, a_t) \Prob{m_t=m, \hat m_t=\hat m \mid o_{1:t}, a_{1:t}} \nonumber \\
  &\substack{(a) \\=} \sum_{m, \hat m \in \Mspace}   \hspace{-.3cm} c(m,\hat m, a_t) \ID{\hat m=h(\Prob{m_t=m \mid o_{1:t}, a_{1:t-1}})} \nonumber \\
   &\times \frac{\ID{a_t=g_t(o_{1:t},a_{1:t-1})}  \Prob{m_t=m\mid o_{1:t}, a_{1:t-1}} }{\sum_{\tilde m \in \Mspace} \ID{a_t=g_t(o_{1:t},a_{1:t-1})}  \Prob{m_t=\tilde m\mid o_{1:t}, a_{1:t-1}}}  \nonumber \\
 & \substack{(b)\\=} \sum_{m, \hat m \in \Mspace} c(m,\hat m, a_t) \ID{\hat{m}=h(\TM^{y_t}(m,x_t))}  \TM^{y_t}(m,x_t), 
 \end{align}
 where $(a)$ follows from \eqref{eq:estimator:general} as well as Bayes' rule, and $(b)$ follows from~\eqref{eq:chap-kolm}.
 
 \section{Proof of Theorem~\ref{thm:approximate-pomdp}}\label{sec:proof_thm:approximate-pomdp}
 The proof consists of  two parts. In the first part, we define a ``virtual" finite-state Markov decision process (MDP), and in the second part, we show  that the optimal solution of this MDP is an $\varepsilon$-optimal solution of Problem~\ref{prob:POMDP}.

\textbf{Part 1:}
For any $ t \in \mathbb{N}$ and finite $k \in \mathbb{N}$, define a so called ``virtual" finite-state MDP with state  $(\tilde x_t, \tilde y_t) \in \Mspace \times \N_k$ and   action $\tilde a_t \in \mathcal{A}$, as well as the initial state $(\tilde x_1,\tilde y_1)=(x_1,y_1)=(m_1,0)$. At time $t \in \mathbb{N}$,  state $(\tilde x_t, \tilde y_t) $  evolves according to function  $\tilde  f: \Mspace \times \N_k  \times \mathcal{O} \rightarrow   \Mspace \times  \N_k $ as
$(\tilde x_{t+1}, \tilde y_{t+1})=\tilde f(\tilde x_{t}, \tilde y_{t}, \tilde  o_{t+1})$
such that
\begin{equation}\label{eq:tilde-f}
\tilde f(\tilde x_{t}, \tilde y_{t}, \tilde  o_{t+1}):=\begin{cases}
(\tilde x_t, \tilde y_t+1), & \tilde o_{t+1}=\B, \tilde y_{t} <k,\\
(m^\ast,0), & \tilde  o_{t+1}=\B,\tilde y_t= k,\\
(\tilde o_{t+1},0), & \tilde o_{t+1} \neq \B,
\end{cases}
\end{equation}
where $\tilde o_{t+1} \in \mathcal{O}$ is a noise process and $m^\ast \in \Mspace$ is an arbitrary empirical distribution. The  probability distribution of the noise $\tilde o_{t+1}$  is identical to that of the observation in the original model~\eqref{eq:tansition_prob_o}, i.e.,
$\mathbb{P}  \left( \tilde o_{t+1} \mid \tilde x_{1:t},\tilde y_{1:t},\tilde a_{1:t}\right)=  (1-\tilde a_tq) \ID{\tilde o_{t+1}=\B}$ 
$+\tilde a_t q      \TM^{\tilde y_t+1}(\tilde o_{t+1},\tilde x_t) \ID{\tilde o_{t+1} \neq \B}$.
The per-step cost of the virtual model  introduced above  is the restriction function $\hat c$ given by~\eqref{eq:hat-c}, over space  $\Mspace \times \N_k \times \mathcal{A}$, i.e., at time $t \in \mathbb{N}$,
$\hat c(\tilde x_t, \tilde y_t, \tilde a_t):= \sum_{m \in \Mspace} c(m,h(\TM^{\tilde y_t}(m,\tilde x_t)), a_t)    \TM^{\tilde y_t}(m, \tilde x_t)$.
The strategy of  the virtual model  is given by
$\tilde a_t=\tilde g_t(\tilde x_{1:t}, \tilde y_{1:t})$,
 and   its  performance is described by
$\tilde J(\tilde g)=\mathbb{E}^{\tilde g}[\sum_{t=1}^\infty \gamma^{t-1} \hat c(\tilde x_t,\tilde y_t, \tilde a_t)]$.
From the standard results in Markov  decision theory~\cite{Bertsekas2012book}, the optimal solution of the virtual model is obtained  by solving  the following Bellman equation for any $\tilde x  \in \Mspace, \tilde y \in \N_k$:
\begin{equation}\label{eq:bellman_infinite_tilde}
\tilde V_k(\tilde x, \tilde y)=\min_{\tilde a \in \mathcal{A}} ( \hat c(\tilde x,\tilde y,\tilde a) + \gamma \Exp{\tilde V_k(\tilde  f(\tilde x,\tilde y,\tilde o))} ),
\end{equation}
where the above expectation is taken over all noises $\tilde o \in \mathcal{O}$.

\textbf{Part 2:} Let $J^\ast$ be the performance under the optimal solution of Bellman equation~\eqref{eq:bellman_infinite} and $\tilde J^\ast $ be the  performance  under the optimal  solution of Bellman equation~\eqref{eq:bellman_infinite_tilde}. We compute an upper bound on the relative distance between $J^\ast$ and  $\tilde J^\ast $, i.e. $|J^\ast - \tilde J^\ast|$,  as follows:
\begin{align}\label{eq:inequality-proof-4}
&   |  \min_{g} \mathbb{E}^{g}\sum_{t=1}^\infty \gamma^{t-1}\hat c(x_t,y_t,a_t) \hspace{-.05cm}- \hspace{-.05cm} \min_{\tilde g} \mathbb{E}^{\tilde g}\sum_{t=1}^\infty \gamma^{t-1}\hat  c(\tilde x_t,\tilde y_t,\tilde a_t) |  \nonumber \\
&= |  \min_{g} \mathbb{E}^{g}[\sum_{t=1}^k \gamma^{t-1}\hat c(x_t,y_t,a_t) +\sum_{t=k+1}^\infty \gamma^{t-1}\hat c(x_t,y_t,a_t)] \nonumber \\
&\quad  - \min_{\tilde g} \mathbb{E}^{\tilde g}[\sum_{t=1}^k \gamma^{t-1}\hat  c(\tilde x_t,\tilde y_t,\tilde a_t) - \sum_{t=k+1}^\infty \gamma^{t-1}\hat  c(\tilde x_t,\tilde y_t,\tilde a_t)]|  \nonumber \\
&\substack{(a)\\ \leq}     \frac{2 \gamma^k  c_{\text{max}} }{1- \gamma},
\end{align}
where $(a)$ follows from  the triangle inequality, the fact that $c_{\text{max}}$ is an upper bound for the per-step cost,  and that the minimization of the expected cost of the original model and that of the virtual model up to  time $k$ are essentially  the same,  as  both  models  start from  the same  initial state $(m_1,0)$, follow the same dynamics, and incur the same cost up to time~$k$.  Finally, it is concluded from Parts 1 and 2 that  when $k$ is sufficiently large such that  $\frac{2 \gamma^k  c_{\text{max}} }{1- \gamma} \leq \varepsilon$, the optimal solution of the Bellman equation~\eqref{eq:bellman_infinite_tilde} is an $\varepsilon$-optimal solution of Problem~\ref{prob:POMDP}.  The proof is completed by incorporating equation~\eqref{eq:tilde-f} in the Bellman equation~\eqref{eq:bellman_infinite_tilde}.

\section{Proof of Theorem~\ref{thm:RL}}\label{sec:proof_thm:RL}
 Let $\tilde V^\ast_k: \Mspace \times \N_k \rightarrow \mathbb{R}_{\geq 0}$ be the optimal value function   satisfying~\eqref{eq:bellman_infinite_tilde}. Define function $Q^\ast: \Mspace \times \N_k \times \mathcal{A} \rightarrow \mathbb{R}_{\geq 0}$ such that for every $x \in \Mspace, y \in \N_k, a \in \mathcal{A}$,
\begin{equation}\label{eq:Q_ast}
Q^\ast(x,y,a):= \hat c(x,y,a)+ \gamma \sum_{o \in \mathcal{O}} \Prob{o|x,y,a} \tilde V^\ast_k(\tilde f(x,y,o)).
\end{equation}
It follows from equations~\eqref{eq:bellman_infinite_tilde} and~\eqref{eq:Q_ast} that  $\tilde V_k^\ast(x,y)=\min_{a \in \mathcal{A}} Q^\ast(x,y,a)$ for every  $ x \in \Mspace$ and $y \in \N_k$, i.e.,
\begin{equation}\label{eq:Q_ast_1}
\Compress
Q^\ast(x,y,a)= \hat c(x,y,a)+ \gamma \sum_{o \in \mathcal{O}} \Prob{o|x,y,a} \min_{a' \in \mathcal{A}} Q^\ast(\tilde f(x,y,o),a').
\end{equation}
According to  Theorem~\ref{thm:approximate-pomdp} and equations~\eqref{eq:bellman_infinite_tilde} and~\eqref{eq:Q_ast}, any $\argmin_{a \in \mathcal{A}} Q^\ast(x,y,a)$  is an argmin for the Bellman equation~\eqref{eq:Bellman_approximate}. Now, rewrite equation~\eqref{eq:Q_ast_1} as 
$Q^\ast(x,y,a)= c'  +   \gamma  \min_{a' \in \mathcal{A}} Q^\ast(\tilde f(x,y,o),a')+ n(x,y,o)$,
where $c'$ denotes  the instantaneous cost at state ($x,y$) and action $a$, and  the random variable $n(x,y,o)$ is defined by
\begin{multline}
n(x,y,o):=  -c' +\hat c(x,y,a)  - \gamma  \min_{a' \in \mathcal{A}} Q^\ast(\tilde f(x,y,o),a') \\
+\gamma \sum_{o \in \mathcal{O}} \Prob{o|x,y,a} \min_{a' \in \mathcal{A}} Q^\ast(\tilde f(x,y,o),a'),
\end{multline}
such that $\mathbb{E}[n(x,y,o)\mid x,y,a]=0$, and
$\mathbb{E}[ n(x,y,o)^2\mid x,y,a] \leq  4 c_{\text{max}}^2 + 4 (\max_{x',y', a'} Q^\ast((x',y'),a'))^2$,
where  $n(x,y,o) \leq 2c_{\text{max}} +2\max_{x',y', a'} Q^\ast((x',y'),a')$. The decision maker can use stochastic approximation theory to approximate the $Q^\ast$-function described by equation~\eqref{eq:Q_ast_1}, because: (i) function $\tilde f$ is independent of the model;  (ii) the expectation and variance of $n(x,y,o)$ are respectively  zero and finite, and (iii) equation~\eqref{eq:Q_ast_1}  is a contraction mapping  in the infinity norm due to the discount factor $\gamma<1$, i.e., for  any  $Q$ and $Q'$:
\begin{equation}
\Ninf{F(Q)- F(Q')} \leq \gamma \max_{x',y',a'} |Q((x',y'),a')    - Q'((x',y'),a') |,
\end{equation}
where $F$ denotes the function form of equation~\eqref{eq:Q_ast_1} such that $Q^\ast=F(Q^\ast)$. Therefore,   the following  stochastic approximation iteration  converges to $Q^\ast$ under standard assumptions in \cite[Theorem 4]{Tsitsiklis1994asychronous}, i.e., for  $\tau \in \mathbb{N}$,
\begin{multline}
Q_{\tau+1}(x,y,a)= Q_\tau(x,y,a)+ \\
 \alpha_\tau (x,y,a)  \big( c'+\gamma \min_{a' \in \mathcal{A}} Q_\tau(\tilde f(x,y,o),a') 
- Q_\tau(x,y,a) \big).
\end{multline}
   The proof  is completed on noting that the obtained  greedy strategy $g^\ast_\varepsilon$  is an $\varepsilon$-optimal solution,  according to  Theorem \ref{thm:approximate-pomdp}.
 
\section{Proof of Theorem~\ref{thm:asymptotic}}\label{sec:proof_thm:asymptotic}
 The total expected discounted cost for any finite horizon $H \in \mathbb{N}$ under no collection action  is given by:
 \begin{align}\label{eq:proof_long}
 &\Exp{\sum_{t=1}^H \gamma^{t-1} c(m_t,\hat m_t,0)}   \substack{(a) \\ \leq } \Exp{\sum_{t=1}^H   K_c \gamma^{t-1} \Ninf{m_t - \hat m_t} } \nonumber \\
& \substack{(b) \\ \leq } K_c \Exp{\Ninf{m_1 -\hat m_1}}+
 K_c \sum_{t=2}^H  \gamma^{t-1}  \mathbb{E} \Big[ K_p^{t-1} \Ninf{m_1 - \hat m_1}\nonumber \\
 & \quad + (\sum_{\tau=1}^{t-1} K_p^{\tau-1}) \mathcal{O}(\frac{1}{\sqrt n}) \Big]\nonumber \\
 & \substack{(c) \\ = } K_c \sum_{t=2}^H  \gamma^{t-1}  (\sum_{\tau=1}^{t-1}  K_p^{\tau-1}) \mathcal{O}(\frac{1}{\sqrt n}) \nonumber \\
 & \substack{(d) \\ = } K_c \sum_{t=2}^H  \gamma^{H-t+1}  (\sum_{\tau=1}^{t-1}   (\gamma K_p)^{\tau-1}) \mathcal{O}(\frac{1}{\sqrt n}) \nonumber \\
  & \substack{(e) \\ \leq } K_c \sum_{t=2}^H  \gamma^{H-t+1}  (\sum_{\tau=1}^{H-1}   (\gamma K_p)^{\tau-1}) \mathcal{O}(\frac{1}{\sqrt n}) \nonumber \\
   & = K_c (\sum_{t=2}^H  \gamma^{H-t+1} ) (\sum_{\tau=1}^{H-1}   (\gamma K_p)^{\tau-1}) \mathcal{O}(\frac{1}{\sqrt n}) \nonumber \\
      & \substack{(f) \\ = } K_c \times \frac{\gamma(1- \gamma^{H-1})}{1-\gamma} \times \frac{1-(\gamma K_p)^{H-1}}{1-\gamma K_p} \times \mathcal{O}({\frac{1}{\sqrt n}}),
 \end{align}
 where $(a)$  follows from Assumption~\ref{assum:Lischitz} and the monotonicity of the expectation operator; $(b)$ follows from Lemma~\ref{lemma:evolution_mean_field} (by applying it recursively); $(c)$  follows  from the fact that $m_1=\hat m_1$; $(d)$  rearranges the terms; $(e)$ follows from the fact that $\gamma K_p$ is non-negative  along with the inequality $t \leq H$, and  $(f)$  follows from Assumption~\ref{assump:gamma_k}. The proof is completed by tending  horizon $H$ to~$\infty$.
 
 \section{Proof of Theorem~\ref{thm:separation}}\label{sec:proof_thm:separation}
 Given any strategy $g$,  the  information set $\{o_{1:t},a_{1:t-1}\}$ can be equivalently expressed by the set $\{x_{1:t},y_{1:t}\}$, according to~\eqref{eq:strategy} and \eqref{eq:o-x-y}. Hence,  the generic estimator $h$ can be represented by a strategy-dependent estimator  $\hat m_t=h^g_t(x_{1:t},y_{1:t})$. From~\eqref{eq:dynamic_example_3} and the definition of  the set $\{x_{1:t},y_{1:t}\}$, one has:
\begin{equation}\label{eq:proof-separation1}
m_t= A^{y_t} x_t + \ID{y_t > 0} \sum_{\tau=1}^{y_t}  A^{\tau-1} \bar w_{t-\tau}.
\end{equation}
  Since local  noises from  time $t-y_t$ to time $t$  have zero mean, and  are mutually independent over  time (and so is $\bar w_{t-y_t:t}$),  the per-step cost function can be described as follows:
\begin{align}
&\Exp{(m_t - \hat m_t)^\intercal (m_t - \hat m_t)z(a_t) + \ell (m_t,a_t) \mid x_{1:t}, y_{1:t},g_{1:t}}\\
&=\Exp{(A^{y_t} x_t - h_t^g(x_{1:t},y_{1:t}))^\intercal   (A^{y_t} x_t - h_t^g(x_{1:t},y_{1:t})} \\
&\times z(g_t(x_{1:t},y_{1:t})) \\
&+ \ID{y_t > 0}  z(g_t(x_{1:t},y_{1:t})) \sum_{\tau=1}^{y_t}  \Exp{(\bar{w}_{t-\tau})^\intercal (A^{\tau-1})^\intercal A^{\tau-1}  \bar{w}_{t-\tau}} \\
&+ \Exp{\ell( A^{y_t} x_t +\ID{y_t > 0}  \sum_{\tau=1}^{y_t}   A^{\tau-1} \bar w_{t-\tau}, g_t(x_{1:t},y_{1:t}))}.
\end{align}
Therefore, for   any control  horizon $H$,  any sample path  $\{x_{1:H},y_{1:H}\}$,  and any strategy $g_{1:H}$,   the first term in the right-hand side of the above equation is the only term that is affected by the choice of $h^g$,  which yields  the unique minimizer $h^g_t(x_{1:t},y_{1:t})= A^{y_t} x_t$. Note  that the structure of this estimator  is  independent of  the probability distribution of the underlying random variables, strategy $g$, and the order of dynamics, and follows the  update rule $\hat m_{t+1}= A^{y_{t+1}} x_{t+1}$, i.e., equation~\eqref{eq:update_rule_KF}.
This estimator  has the same structure as  the minimum  mean-square  estimator $\Exp{m_t \mid o_{1:t},a_{1:t-1}}$. Hence, 
\begin{equation}\label{eq:update_rule_KF-2}
\hat m_{t+1}=\begin{cases}
x_{t+1}=o_{t+1}, &  o_{t+1} \neq \B \text{ (i.e., $y_{t+1}=0$)},\\
A^{1+y_{t}} x_{t}=A \hat m_t, & o_{t+1} = \B \text{ (i.e., $y_{t+1}> 0$)}.
\end{cases}
\end{equation}

\begin{IEEEbiography}[{\includegraphics[width=1in,height=1.25in,clip,keepaspectratio]{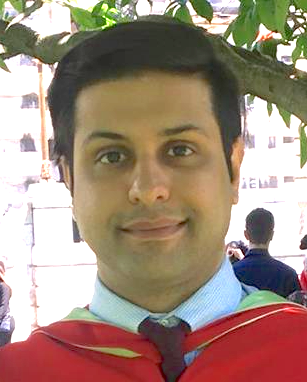}}]{Jalal Arabneydi}
received the Ph.D. degree  in Electrical and Computer Engineering from
  McGill University, Montreal, Canada in 2017.  
  He is currently a postdoctoral
  fellow at Concordia University. He  was  the recipient of the
best student paper award at the 53rd Conference on
Decision and Control (CDC), 2014. His  principal research interests
  include  stochastic control,  robust optimization, game theory,  large-scale system, multi-agent reinforcement learning  with applications in complex networks including  smart grids, swarm robotics,  and finance.  His current research interest is  focused on what he calls deep planning, which bridges  decision making theory and artificial intelligence. The ultimate goal is to define proper mathematical tools and solution concepts in order to develop large-scale decision-making algorithms that work under imperfect information and incomplete knowledge with analytical performance guarantees.
\end{IEEEbiography}
\begin{IEEEbiography}
[{\includegraphics[width=1in,height=1.25in,clip,keepaspectratio]{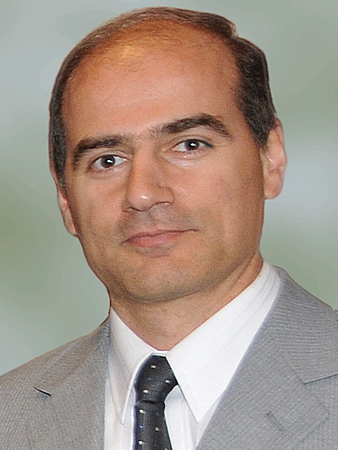}}]{Amir G. Aghdam}
 received the Ph.D. degree in electrical and computer engineering from the University of Toronto, Toronto, ON, Canada, in 2000. He is currently a Professor in the Department of Electrical and Computer Engineering at Concordia University, Montreal, QC, Canada. He was a Visiting Scholar at Harvard University in fall 2015, and was an Associate with the Harvard John A. Paulson School of Engineering and Applied Sciences from September 2015 to
December 2016. His research interests include multi-agent networks, distributed control, optimization and sampled-data systems.  He is a member of Professional Engineers Ontario, chair of the Conference Editorial Board of IEEE Control Systems Society,  Editor-in-Chief of the IEEE Systems Journal, and was an Associate Editor of the IEEE Transactions on Control Systems Technology and the Canadian Journal of Electrical and Computer Engineering. He has been a member of the Technical Program Committee of a number of conferences, including the IEEE Conference on Systems, Man and Cybernetics (IEEE SMC)Inline image
 and the IEEE Multiconference on Systems and Control (IEEE MSC). He was a member of the Review Committee for the Italian Research and University Evaluation Agency (ANVUR) for 2012–2013, and a member of the Natural Sciences and Engineering Research Council of Canada (NSERC) ECE Evaluation Group for 2014–2016. He is a recipient of the 2009 IEEE MGA Achievement Award, the 2011 IEEE Canada J. J. Archambault Eastern Canada Merit Award, and  the 2020 IEEE Canada J. M. Ham Outstanding Engineering Educator Award. He was the 2014–2015 President of IEEE Canada and Director (Region 7), IEEE, Inc., and was also a member of the IEEE Awards Board for this period. Dr. Aghdam was a member of the IEEE Medal of Honor Committee for 2017-2019, and IEEE MGA Awards and Recognition Committee for 2017-2018, and is currently the Vice-Chair of the IEEE Medals Council.
\end{IEEEbiography}

\end{document}